\newtheorem{theorem}{Theorem}
\newtheorem{claim}{Claim}
\newtheorem{corollary}{Corollary}
\newtheorem{lemma}{Lemma}
\newtheorem{proposition}{Proposition}
\newtheorem{remark}{Remark}
\numberwithin{equation}{section}
\title{Asymptotically conical Ricci-flat K\"ahler metrics on $\mathbb{C}^2$ with cone singularities along a complex curve}
\author{Martin de Borbon}
\begin{document}
\maketitle
\begin{abstract}
We prove an existence theorem for asymptotically conical Ricci-flat K\"ahler metrics on $\mathbb{C}^2$ with cone singularities along a smooth complex curve. These are expected to arise as blow-up limits of non-collapsed sequences of K\"ahler-Einstein metrics with cone singularities.
\end{abstract}

\section{Introduction}

We work on $\mathbb{C}^2$ with standard complex coordinates $z, w$. Let $P=P(z, w)$ be a degree $d \geq 2$ polynomial such that $C= \lbrace P=0 \rbrace$ is a smooth complex curve. Write
$$P = P_d + Q, $$
with $P_d$ the  homogeneous degree $d$ part of $P$ and $\deg (Q) \leq d-1$. We restrict to the case when $C$ has $d$ different asymptotic lines, which means that the zero locus of $P_d$ consists of $d$ distinct complex lines $L_1, \ldots, L_d$. Denote $L= \cup_{k=1}^d L_k$.

We fix a number $\beta$ such that
\begin{equation} \label{restriction on beta}
\frac{d-2}{d}< \beta < 1 .
\end{equation}
It follows from the work of Troyanov \cite{Troyanov1} and Luo-Tian \cite{LuoTian} that for \(\beta\) in this range there is a unique compatible spherical metric \(g\) on $\mathbb{CP}^1$ with cone angle $2 \pi \beta$ at the points corresponding to \(L\). The metric $g$ lifts, through the Hopf map, to a polyhedral K\"ahler cone metric $g_F$ on $\mathbb{C}^2$ with cone angle \(2\pi\beta\) along the complex lines \(L\) -see \cite{Panov}-. As a Riemannian cone, we have \(g_F= dr^2 + r^2 \overline{g}\) with $\overline{g}$ a constant sectional curvature \(1\) metric on the \(3\)-sphere with cone angle \(2\pi\beta\) in transverse directions to the Hopf circles determined by $L$.

We let $\mathcal{D}$ be the set of all diffeomorphisms $H$ of $\mathbb{C}^2$ for which there is a compact set $K$ such that $H(C\setminus K) \subset L$, we also require \(H\) to be  asymptotic to the identity in the following sense:  There are constants $A_j$ such that $|H(x) -x| \leq A_0$,  $|DH(x) -Id| \leq A_1|x|^{-1}$ and $|D^{\alpha} H(x)| \leq A_j |x|^{-j}$ for all $x \in \mathbb{C}^2$ and $j = |\alpha| \geq2$. It is elementary to prove that $\mathcal{D}$ is not empty. Our main result is the following

\begin{theorem} \label{thm}
There is a  K\"ahler metric $g_{RF}$ on $\mathbb{C}^2$ with cone angle $2\pi\beta$ along $C$ and $H \in \mathcal{D}$ such that

\begin{itemize}
\item 
\begin{equation} \label{RF}
\omega^2_{RF} = |P|^{2\beta-2} \Omega \wedge \overline{\Omega}, 
\end{equation}
where $\omega_{RF}$ is the associated K\"ahler form and $\Omega= (1/\sqrt{2}) dz  dw$.
		
\item 
\begin{equation} \label{AC}
|(H^{-1})^{*}g_{RF} - g_F|_{g_F} \leq  A r^{\nu}
\end{equation}
outside a compact set,  for some constants $A>0$ and $\nu < 0$. 
\end{itemize}

\end{theorem}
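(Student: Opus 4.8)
The plan is to realize $g_{RF}$ as a perturbation of the flat cone $g_F$ and to solve the Monge--Ampère equation \eqref{RF} in weighted Hölder spaces tailored to the mixed geometry of the problem: a metric cone at infinity whose link is itself singular, together with a transverse cone of angle $2\pi\beta$ along the curve $C$. Throughout, the key structural fact I would exploit is that at infinity the model $g_F$ is exactly a metric cone with apex at $0$, so that separation of variables and an indicial-root analysis are available there, while near and along $C$ the model is the edge metric $|z_1|^{2\beta-2}|dz_1|^2+|dz_2|^2$.

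First I would fix a reference Kähler form. Choose $H\in\mathcal{D}$ (nonempty, as noted above) and set $\omega_0=H^{*}\omega_F$ outside a large compact set, interpolating inside to a globally defined Kähler metric on $\mathbb{C}^2\setminus C$ with a transverse cone of angle $2\pi\beta$ along all of $C$. Writing $P_\infty$ for the homogeneous degree-$d$ part of $P$ (the product of the linear forms cutting out $L$), the Calabi Ansatz makes $g_F$ flat and Ricci flat with model volume form $|P_\infty|^{2\beta-2}\,\Omega\wedge\overline{\Omega}$, so the density
\[
e^{f}:=\frac{|P|^{2\beta-2}\,\Omega\wedge\overline{\Omega}}{\omega_0^{\,2}}
\]
is bounded across $C$ (the two $|\,\cdot\,|^{2\beta-2}$ singularities cancel) and decays at infinity at a definite polynomial rate in $r$: indeed $P/P_\infty\to 1$ while the diffeomorphism error of $H$ is $O(r^{-1})$ in the first derivative, so $f\in C^{0,\alpha}_{\nu-2}$ for a suitable weight $\nu<2$. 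Putting $\omega_\phi:=\omega_0+i\partial\bar\partial\phi$, equation \eqref{RF} becomes $\omega_\phi^{\,2}=e^{f}\omega_0^{\,2}$; note that the conclusion \eqref{AC} only asks for decay at the level of the metric, i.e.\ of $i\partial\bar\partial\phi\sim O(r^{\nu-2})$, so it suffices to produce a potential $\phi\in C^{2,\alpha}_{\nu}$ with $\nu<2$, after which \eqref{AC} holds with $\gamma=\nu-2<0$.

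The analytic core, and the step I expect to be the main obstacle, is the linear theory for the complex Laplacian $\Delta_{\omega_0}=\operatorname{tr}_{\omega_0}i\partial\bar\partial$, the linearisation of $\phi\mapsto\log(\omega_\phi^{\,2}/\omega_0^{\,2})$. I would combine two ingredients. Along $C$ one needs the Schauder theory for the Laplacian of the transverse cone model, giving interior and edge estimates in the Donaldson-type spaces adapted to cone angle $2\pi\beta$ (as in Jeffres--Mazzeo--Rubinstein), so that solutions are genuine conical Kähler potentials. At infinity, where $\omega_0$ is exactly the cone $g_F$, one needs the Fredholm and isomorphism theory for the Laplacian of a metric cone (Lockhart--McOwen style), whose admissible weights are those avoiding the indicial roots; these roots are governed by the spectrum of the Laplacian of the link of $g_F$ --- a $3$-dimensional Sasakian-type space fibred over the constant-curvature sphere $(\mathbb{CP}^1,g)$ with its $d$ cone points. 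The delicate points are (i) extracting enough information about this \emph{singular} link spectrum --- where the restriction \eqref{restriction on beta} and positivity of the curvature of $g$ enter --- to exhibit a nonempty window of weights $\nu<2$ in which $\Delta_{\omega_0}\colon C^{2,\alpha}_{\nu}\to C^{0,\alpha}_{\nu-2}$ is an isomorphism, and (ii) ruling out cokernel, i.e.\ proving surjectivity, by combining the maximum principle for the kernel with a duality argument at the conical end.

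With the linear isomorphism in hand I would close the argument by the standard nonlinear scheme. Expanding $\log(\omega_\phi^{\,2}/\omega_0^{\,2})=\tfrac12\Delta_{\omega_0}\phi+Q(\phi)$, where $Q$ collects the quadratic-and-higher terms and gains decay in the weighted Hölder norms, I would either set up a contraction mapping for $\phi=\Delta_{\omega_0}^{-1}\bigl(2f-2Q(\phi)\bigr)$ in a small ball of $C^{2,\alpha}_{\nu}$, or run a continuity method along $e^{tf}\omega_0^{\,2}$, $t\in[0,1]$: openness is immediate from the isomorphism, and closedness follows from uniform weighted a priori estimates --- a $C^0$ bound from the maximum principle on the conical end, a second-order bound from a Chern--Lu/Yau-type inequality adapted to the cone and edge, and higher regularity from the edge Schauder estimates. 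The resulting $\phi\in C^{2,\alpha}_{\nu}$ yields a genuine conical Kähler metric $g_{RF}$ on $\mathbb{C}^2\setminus C$ solving \eqref{RF}, and the estimate $i\partial\bar\partial\phi=O(r^{\nu-2})$ together with the asymptotics of $H$ gives
\[
(H^{-1})^{*}\omega_{RF}-\omega_F=(H^{-1})^{*}(i\partial\bar\partial\phi)+\bigl((H^{-1})^{*}\omega_0-\omega_F\bigr),
\]
both terms of which decay, establishing \eqref{AC} with $\gamma=\nu-2<0$.
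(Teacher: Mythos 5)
Your skeleton --- a reference metric asymptotic to $g_F$ via $H\in\mathcal{D}$, weighted H\"older spaces combining Donaldson's edge Schauder theory with a conical indicial-root analysis at infinity, then a perturbation or continuity argument for the Monge--Amp\`ere equation --- is the same as the paper's. But both of your proposed ways of closing the nonlinear argument have genuine gaps. The contraction mapping $\phi=\Delta_{\omega_0}^{-1}\bigl(2f-2Q(\phi)\bigr)$ requires $\|f\|_{\alpha,\nu-2}$ to be small, and it is not: $f$ measures how much $C$ differs from its asymptotic lines $L$, which is of definite size on a compact set, no matter how one chooses $\omega_0$. The paper's resolution is a two-step reduction you are missing: the implicit function theorem (via Proposition \ref{LTHRM}) is used only to absorb the \emph{small tail} $f-h_N$, where $h_N$ is a compactly supported truncation with $\|f-h_N\|_{\alpha,\gamma}\to 0$; this produces $\omega_0$ satisfying \ref{t=0} with $f_0$ smooth and compactly supported, and only then is the continuity method \ref{cpath} run on the compactly supported but large datum $f_0$. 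If you instead run the continuity method, your claimed ``$C^0$ bound from the maximum principle'' does not work: the equation $\omega_{\phi}^2=e^{tf_0}\omega_0^2$ has no zeroth-order term in $\phi$, so the maximum principle gives no bound on $\phi$ itself (this is exactly why Yau's $C^0$ estimate is the nontrivial step in the compact case). The paper proves the $C^0$ bound by Moser iteration using the Sobolev inequality for $\omega_0$, and the integration by parts underlying the iteration itself requires justification (vanishing of boundary terms both at the curve $C$ and at spatial infinity, which is where the weight condition $p\delta+2<0$ enters).

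The second structural gap is in the Chern--Lu $C^2$ estimate, which needs two curvature hypotheses you do not address: an upper bound $\mathrm{Bisec}(\omega_{ref})\leq B$ for a fixed reference metric, and a uniform lower bound $\mathrm{Ric}(\omega_t)\geq -A\,\omega_{ref}$ along the path. The latter is precisely the second reason the reduction to compactly supported smooth $f_0$ matters: along the path $\mathrm{Ric}(\omega_t)=(1-t)\mathrm{Ric}(\omega_0)$, and $\mathrm{Ric}(\omega_0)$ is controlled because $f_0\in C^\infty_c$; with your $f$ only polynomially decaying in a H\"older class this uniform bound is not available. The former is a substantial piece of work, not a citation: the paper devotes Subsection \ref{upbound} to constructing $\omega_{ref}$ and adapting Appendix A of \cite{JMR} to get a bisectional curvature upper bound that holds uniformly out to infinity (by a scaling and covering argument on the conical end). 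Notably, the paper records that for the naively glued metric $\omega$ such a bound is \emph{not} known at the points where the cutoff function kills the conical potential, which is exactly why a separate $\omega_{ref}=\omega+\Lambda\nu$ has to be introduced. Your proposal treats this as routine, but without it the $C^2$ estimate --- and hence closedness of the continuity path --- is unproven.
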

A foremost precedent to Theorem \ref{thm} is Section 5 in \cite{Donaldson} on model Ricci-flat metrics with cone singularities along the conic $\lbrace zw=1 \rbrace$, of the kind we consider when $d=2$. The metrics in \cite{Donaldson} are invariant under the $S^1$-action $e^{i\theta}(z, w) = (e^{i\theta}z, e^{-i\theta}w)$, are almost-explicit and are constructed by means of the Gibbons-Hawking ansatz. In contrast, we use PDE methods and $g_{RF}$ is far from being explicit. A few remarks regarding Theorem \ref{thm} are in order
\begin{enumerate}
	\item The metric \(g_{RF}\) is smooth on the complement of the curve and has cone singularities along \(C\) in a H\"older continuous sense as defined in \cite{Donaldson}. We will recall this notion later on, meanwhile we limit to say that if \((z_1, z_2)\) are local complex coordinates in which \(C=\{z_1=0\}\)  and \(g_{(\beta)}\) is the local model metric
	\begin{equation*}
	g_{(\beta)} = \beta^2 |z_1|^{2\beta-2} |dz_1|^2 + |dz_2|^2;
	\end{equation*} 
	then there is $\lambda >1$ such that \(\lambda^{-1} g_{(\beta)} \leq g_{RF} \leq \lambda g_{(\beta)} \) in a neighbourhood of $\{z_1=0\}$. 
	
	\item The  Ricci form of $g_{RF}$ is given by $-i\partial\overline{\partial} \log \det (g_{RF})$. It follows from equation \ref{RF} that, up to a constant factor,  $\det (g_{RF})$ is equal to $|P|^{2\beta -2}$. Since $P$ is holomorphic and  non-vanishing in the complement of the curve we conclude that $g_{RF}$ is Ricci-flat on its smooth locus.
	
	\item The proof shows that the asymptotic behaviour \ref{AC} holds in a stronger $C^{\alpha}$ sense and it also specifies the asymptotic rate \(\nu\).
	
\end{enumerate}

Our interest in Theorem \ref{thm} comes from the blow-up analysis of the K\"ahler-Einstein (KE) equations in the context of solutions with cone singularities. 
In the case of \emph{smooth} KE metrics on complex surfaces the solutions can degenerate -in the non-collapsed regime- only by developing isolated orbifold points -see \cite{Anderson}-, and the blow-up limits at these are the well-known ALE spaces -see \cite{Kronheimer}-. In the conical case a new feature arises when the curves along which the metrics have singularities degenerate. In this setting, Theorem \ref{thm} furnishes  models for blow-up limits at a point where a sequence of smooth curves develops an ordinary \(d\)-tuple point.

\subsection*{Outline of proof of Theorem \ref{thm} and content of the paper}
In a few words we can say that our proof uses the continuity method and goes along the lines of Yau's proof of the Calabi conjecture \cite{Yau}. The work of Yau has been extended to the context of metrics with cone singularities by  Brendle \cite{Brendle} and Jeffres-Mazzeo-Rubinstein \cite{JMR} -among others- and to the context of ALE and AC metrics by Joyce \cite{Joyce} and Conlon-Hein \cite{ConlonHein} -among others again-. Our work mixtures  \cite{Brendle}, \cite{JMR} and \cite{Joyce}, \cite{ConlonHein}.

In Section \ref{Fmetrics} we review Panov's polyhedral K\"ahler cones \cite{Panov}. In particular, we provide an explicit formula for the potential of \(g_F\) in terms of the corresponding spherical metric on the projective line. We can think of \(g_F\) as an `approximate solution' to \ref{RF}, in the sense that if \(\omega_F\) denotes its K\"ahler form then \(\omega_F^2= |P_d|^{2\beta-2} \Omega \wedge \overline{\Omega}\).

In Section \ref{Rmetrics} we construct $H \in \mathcal{D}$ and a reference metric $\omega$ (we use the common abuse of language which undistinguishes a metric from its K\"ahler form). The reference metric \(\omega\) has cone angle $2\pi \beta$ along $C$ and is asymptotic to $\omega_{F}$.  In Subsection \ref{upbound} we construct another metric,  $\omega_{B}$, which is quasi-isometric to $\omega$ and has the fundamental property that its bisectional curvature is bounded from above.  The proof of this bound goes along the lines of Appendix A in \cite{JMR}. We finish Section \ref{Rmetrics} by establishing a Sobolev inequality  for our reference metrics.

We develop the relevant linear theory in  Section \ref{la}. First we review some foundational material from \cite{Donaldson}. We recall Donaldson's interior Schauder estimates, which are of fundamental importance in our analysis. Having the interior estimates at hand, in Subsections \ref{wh} and \ref{mr} we  set up a theory of  weighted H\"older spaces. Our  references are \cite{Bartnik}, Chapter 2 in \cite{PacardRiviere} and Chapter 8 in \cite{Szekelyhidi}.  The main result of Section \ref{la} is Proposition \ref{LTHRM}, which establishes Fredholm mapping properties for the Laplacian acting in weighted spaces. This parallels  known results in the case of asymptotically conical smooth metrics, as stated in Theorem 2.11 of Conlon-Hein \cite{ConlonHein}.
In subsection \ref{ap},  as an application of Proposition \ref{LTHRM} together with the Implicit Function Theorem,  we show the existence of a metric $\omega_0$ asymptotic to $\omega_F$ such that
\begin{equation} \label{t=0} 
\omega_0^2 = e^{-f_0} |P|^{2\beta -2} \Omega \wedge \overline{\Omega} , 
\end{equation}
with $f_0$ a smooth function of compact support. What will be important for us, apart from the fact that \(\omega_0\) solves \ref{RF} outside a compact set, is that $\omega_0$ has uniformly bounded Ricci curvature. 

To prove Theorem \ref{thm} it is enough to show that there exists $u \in C^{2, \alpha}_{\delta}$ (our notation for the weighted H\"older spaces) such that
$$ (\omega_0 + i \partial \overline{\partial} u)^2 = e^{f_0} \omega_0^2 .$$
The solution is then $\omega_{RF} = \omega_0 + i \partial \overline{\partial} u$. In order to solve the equation we use Yau's continuity path and consider the set 
\begin{equation} \label{cpath}
T = \lbrace t \in [0,1] : \exists \hspace{1mm} u_t \in C^{2, \alpha}_{\delta} \hspace{3mm}\mbox{solution of} \hspace{3mm} (\omega_0 + i \partial \overline{\partial} u_t)^2 = e^{tf_0} \omega_0^2 \rbrace . 
\end{equation}
We want to prove that $1 \in T$. Proposition \ref{LTHRM} implies that $T$ is open and $0 \in T$ trivially with $u_0 =0$. The closedness of $T$ follows from the a priori estimate $\| u_t \|_{2, \alpha, \delta} \leq C$ for some constant $C>0$  independent of $t \in T$. This is the content of Proposition \ref{a priori est}, the main result of Section \ref{APRIORI},  which we prove  into several steps. We follow Joyce \cite{Joyce} closely, our main difference from \cite{Joyce} is in the \(C^2\)-estimate. The details go as follows: 
First we estimate the $C^0$-norm of $u$, to do this we use the Sobolev inequality (for the metric $\omega_0$) and then we run a Moser iteration, same as \cite{Joyce}. To estimate the $C^2$-norm of $u$ we use the maximum principle and the Chern-Lu inequality (in a slightly different way than in \cite{JMR}). Here it is crucial that we have an upper bound on the bisectional curvature of $\omega_{B}$ and a lower bound on the Ricci curvature of  $\omega_t = \omega_0 + i \partial \overline{\partial} u_t $ in the form of $\mbox{Ric}(\omega_t) \geq - A \omega_{B}$,  for some $A>0$. This bound holds for $\omega_0$ by \ref{t=0} and it holds for $\omega_t$ since along the continuity path \ref{cpath} 
\begin{equation}
\mbox{Ric}(\omega_t) = (1-t) \mbox{Ric}(\omega_0) . 
\end{equation}
The $C^2$-estimate gives us the uniform bound $ C^{-1} \omega_0 \leq \omega_t \leq C \omega_0$. Then we can apply the interior $C^{2, \alpha}$-estimate given by Theorem 1.7 of Chen-Wang \cite{ChenWang}. We proceed to the weighted estimates in Subsection \ref{WESTIMATES}. We start by proving a bound on $\|u_t\|_{0, \mu}$ for some $\delta< \mu < 0$. The technique is Moser iteration and we follow \cite{Joyce} again. The bound on $\|u_t\|_{2, \alpha, \delta}$ follows from the linear theory developed. Theorem \ref{thm} is finally proved in \ref{PFTHM}. 

\subsection*{Acknowledgment} Theorem \ref{thm} is the main outcome of the author's PhD Thesis at Imperial College, founded by the European Research Council Grant 247331 and defended in December 2015. I wish to thank my supervisor, Simon Donaldson, for his continuous support and patience.

\section{Flat metrics} \label{Fmetrics}

\subsection{Spherical metrics with cone singularities on $\mathbb{CP}^1$ } \label{SPH MET}

The expression of a spherical metric in geodesic polar coordinates $(\rho, \theta)$ around a conical point of total angle $2\pi \beta$ is 
\begin{equation} \label{spherical1}
d\rho^2 + \beta^2 \sin^2(\rho) d\theta^2.
\end{equation}
There is an induced complex structure on a punctured neighbourhood of the origin given by an anti-clockwise rotation of  angle  $\pi /2$ with respect to \ref{spherical1}. It is a basic fact that we can change coordinates so that this complex structure extends smoothly to $0$. Indeed, if we write $ \eta = \left( \tan (\rho/2) \right)^{1/\beta} e^{i\theta}$ our model metric takes the form
\begin{equation} \label{spherical2}
4\beta^2 \frac{|\eta|^{2\beta-2}}{(1+|\eta|^{2\beta})^2} |d\eta|^2 .
\end{equation}

Let $ L_1, \ldots, L_d \in \mathbb{CP}^1$ be $d$ distinct points. We want to define the notion of a compatible spherical metric $g$ with a cone singularity of angle $2\pi\beta$ at the given points. There are two equivalent points of view

\begin{itemize}
	
	\item $g$ is a metric on the $2$-sphere minus $d$ points which is locally isometric to the round sphere of radius $1$. Around each of the singular points there are polar coordinates $(\rho, \theta)$ such that $g$ is given by $\ref{spherical1}$. The metric  $g$ endows the punctured sphere with the complex structure of $\mathbb{CP}^1 \setminus \lbrace L_1, \ldots, L_d \rbrace$.
	
	\item $g$ is a compatible metric on  $\mathbb{CP}^1 \setminus \lbrace L_1, \ldots, L_d \rbrace$ of constant Gaussian curvature equal to $1$  . Around each singular point we can find a complex coordinate $\eta$ in which $g$ is given by $\ref{spherical2}$.
	
\end{itemize}

A basic result, due to Troyanov \cite{Troyanov1} and Luo-Tian \cite{LuoTian}, asserts that for $\beta$ in the range \ref{restriction on beta} there is a unique compatible spherical metric $g$ with cone angle \(2\pi\beta\) at \(L_1, \ldots, L_d\). The situation is far much complicated if we allow \(\beta>1\), uniqueness fails -see \cite{Troyanov2}- and there is not a completely understood existence theory -see \cite{MondelloPanov}-. For this, among many others reasons, we restrict all along this paper to \(\beta<1\).

When $d=3$ the metric $g$ is obtained by doubling a spherical equilateral triangle $T$ with interior angles equal to $\beta \pi$. The restriction that \(\beta>1/3\) in this case is due to the fact that the sum of the angles must be bigger than \(\pi\). More generally we can double a spherical polygon with \(d\) vertices, but when \(d \geq 4\) most spherical metrics are not of this kind. The fact is that any such $g$ fits isometrically in the round $3$-sphere as the boundary of a spherical convex polytope and this includes the doubling of polygons as degenerate cases where all the vertices lie in a totally geodesic $2$-sphere. It follows from elementary spherical geometry that the total area of $g$ is equal to \(4\pi c\) with 
\begin{equation} \label{Number c}
	c = \frac{1}{2} (2+d\beta-d).
\end{equation}  
Note that our restriction on \(\beta\), \ref{restriction on beta}, is equivalent to \(0<c<1\).

\subsection{Spherical  metrics with cone singularities  on the 3-sphere}
Write $\mathbb{R}^4 = \mathbb{R}^2 \times \mathbb{R}^2$ and take polar coordinates $ (r_1, \theta_1), (r_2, \theta_2)$ on each  factor. Consider the product of a standard cone of total angle $2\pi\beta$ with an Euclidean plane 
\begin{equation*}
g_{(\beta)} = dr_1^2 + \beta^2 r_1^2 d\theta_1^2 +  dr_2^2 +  r_2^2 d\theta_2^2 .
\end{equation*}
We want to write $g_{(\beta)}$ as a Riemannian cone. It is a general fact that the product of two metric cones is a metric cone, in our case this amounts to check that if we define $ r \in (0, \infty) $ and $ \rho  \in (0, \pi/2) $ by 
$$ r_1 = r \sin \rho, \hspace{5mm} r_2 = r \cos \rho ; $$
then $g_{(\beta)} = dr^2 + r^2  \overline{g}_{(\beta)}$ with
\begin{equation} \label{singsphere}
\overline{g}_{(\beta)}= d\rho^2 + \beta^2 \sin^2(\rho) d\theta_1^2 + \cos^2(\rho) d\theta_2^2 . 
\end{equation}
We think of $\overline{g}_{(\beta)}$ as a metric on the 3-sphere with a cone singularity of angle $2\pi \beta$ transverse to the circle given by the intersection of $ \lbrace 0 \rbrace \times \mathbb{R}^2$ with the unit sphere. 

Let $S^3 = \lbrace |z|^2 + |w|^2 = 1 \rbrace \subset \mathbb{C}^2$ equipped with the $S^1$-action $e^{it}(z, w)=(e^{it}z, e^{it}w)$ and let $H : S^3 \to \mathbb{CP}^1$ be the Hopf bundle. Denote by  $g$  the compatible metric on $\mathbb{CP}^1$ with constant curvature $K_g =4$ and cone angle $2\pi\beta$ at the points corresponding to $L$. \emph{Note that this  is $1/4$ times the spherical metrics we considered in Subsection \ref{SPH MET}.} We lift \(g\) to a spherical metric on \(S^3\) using a suitable connection.

\begin{lemma} \label{3sphere}
	
	There is an $S^1$-invariant metric $\overline{g}$ on $S^3 \setminus L$ such that
	
	\begin{itemize} 
		\item $H : (S^3 \setminus L, \overline{g}) \to (\mathbb{CP}^1 \setminus L, g)$ is a Riemannian submersion with geodesic fibres of constant length.
		\item $\overline{g}$ is locally isometric to the round 3-sphere of radius 1. 
		\item Each $p \in L$ has a neighborhood in which $\overline{g}$ agrees with  $\overline{g}_{(\beta)}$.
		
	\end{itemize}
	
\end{lemma}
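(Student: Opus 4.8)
The plan is to realize $\overline{g}$ through the principal bundle / Riemannian submersion structure of the Hopf map, mimicking the way the round metric on $S^3$ fibers over the Fubini--Study metric on $\mathbb{CP}^1$. I regard the diagonal $S^1$-action as the structure group, so that $H: S^3 \setminus L \to \mathbb{CP}^1 \setminus L$ is a principal $S^1$-bundle whose fibres are the Hopf circles, and I look for
$$\overline{g} = \eta \otimes \eta + H^* g,$$
where $\eta$ is a connection $1$-form normalized so that $\eta$ equals $1$ on the generator of the $S^1$-action. This normalization makes the fibres geodesics of length $2\pi$, forces $\overline{g}$ to be $S^1$-invariant, and makes $H$ a Riemannian submersion onto $(\mathbb{CP}^1 \setminus L, g)$ by construction, so that the first bullet comes for free. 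The whole problem is thereby reduced to a good choice of horizontal distribution, i.e.\ of the connection $\eta$.

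For the second bullet I would appeal to O'Neill's formulas for a submersion with one-dimensional totally geodesic fibres over a surface: the horizontal $2$-plane has curvature $K_g - 3|A|^2$ and a mixed plane has curvature $|A|^2$, where the O'Neill tensor $A$ is determined by the curvature $d\eta$. Since $g$ has constant curvature $K_g = 4$ away from $L$, the total space is locally isometric to the unit round $S^3$ precisely when $|A| \equiv 1$, which amounts to prescribing $d\eta = c_0\, H^* dA_g$ for a universal constant $c_0$, with $dA_g$ the area form of $g$. Equivalently, over any simply connected $U \subset \mathbb{CP}^1 \setminus L$ one uses that $(U,g)$ develops isometrically into the round $S^2$ and pulls back the round Hopf structure; this is the same data seen through the developing map.

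The key step, and the main obstacle, is the existence of a \emph{global} connection $\eta$ on $S^3 \setminus L$ with the prescribed curvature $c_0\, H^* dA_g$ that moreover matches the model $S^3_{\beta}$ near every singular fibre. Cohomologically there is no obstruction on the open base, since $\mathbb{CP}^1 \setminus L$ retracts onto a wedge of circles and any two connections differ by a pulled-back $1$-form; but this is not the real point. I must control the holonomy of $\eta$ around a small loop encircling each point of $L$, together with its behaviour as the loop shrinks. My plan is to build $\eta$ near each singular fibre directly from the cone model, reading off from \ref{singsphere} the local form for which $\overline{g} = \overline{g}_{(\beta)}$, and to build $\eta$ on the regular part from the O'Neill / developing-map prescription; on the overlapping annuli both candidates have curvature $c_0\, H^* dA_g$, so they differ by a closed horizontal $1$-form and can be reconciled by a fibre rotation \emph{provided their holonomies around the puncture agree}. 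I expect to verify that this holonomy matching is forced by the cone angle being exactly $2\pi\beta$: the total curvature $\tfrac{1}{2\pi}\int_{\mathbb{CP}^1} c_0\, dA_g$ is pinned down by Gauss--Bonnet \ref{GB}, and the fractional holonomy contributed by each cone point is exactly the one carried by the singular Hopf fibre of $S^3_{\beta}$. This is where hypothesis \ref{restriction on beta}, through the existence of the spherical cone metric $g$ (Troyanov, Luo--Tian), enters decisively.

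Once $\eta$ is in hand the third bullet is a local computation: near a point of $L$ the chosen $\eta$ and the cone form of $g$ assemble, after the coordinate change of \ref{singsphere}, into exactly $\overline{g}_{(\beta)}$, so $H$ carries a neighbourhood of each singular fibre isometrically onto a neighbourhood of the singular locus of $S^3_{\beta}$. I would finish by noting that $S^1$-invariance and smoothness of $\overline{g}$ on $S^3 \setminus L$ are immediate from the construction, and that the normalization making $\eta$ equal $1$ on the $S^1$-generator keeps the regular fibres great circles of length $2\pi$, matching the unit round $S^3$ rather than a rescaling of it.
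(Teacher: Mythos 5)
Your route --- a connection $1$-form on the restricted Hopf bundle, curvature prescribed as a multiple of the area form of $g$, O'Neill's formulas for local roundness, and a holonomy-matching argument at the punctures --- is in essence the construction the paper carries out (there $\overline{g} = g + \frac{c^2}{4}\alpha^2$, where $\alpha$ is the period-$2\pi$ connection with $d\alpha = c^{-1}H^{*}(K_g\,dV_g)$ and the gauge condition \ref{gauge condition}). But your proposal contains one decision that is fatal, and it sits exactly at the step you yourself single out as the main obstacle: the normalization $\eta(\text{generator})=1$, i.e.\ all fibres geodesics of length $2\pi$. That fibre length is not a free normalization, and the holonomy matching you hope is ``forced'' by Gauss--Bonnet is in fact \emph{contradicted} by it. Local roundness does not pin down the fibre length (it is a global invariant, so ``unit round $S^3$'' does not imply ``fibres of length $2\pi$''); the length the construction must discover is $\pi c$, with $c = 2+d\beta -d$, as recorded in Remark \ref{volume}.

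Here is the obstruction, quantitatively. With unit-length fibres your $\eta$ is simultaneously the connection dual to the unit vertical field and the connection of the period-$2\pi$ action, and O'Neill forces $d\eta = \pm 2\,H^{*}dA_g$; since $K_g\equiv 4$, equation \ref{GB} gives $\mathrm{Vol}(g)=\pi c/2$, so the total curvature is $\pm 2\,\mathrm{Vol}(g)=\pm \pi c$, which by \ref{restriction on beta} is strictly between $0$ and $2\pi$ in absolute value. On the other hand, matching the model \ref{singsphere} along a singular fibre leaves no room for ``fractional holonomy carried by the singular Hopf fibre'': near a cone point one can write $\eta = dt + (\beta \sin^2\rho + \kappa)\,d\theta + df$ in a trivialization, and the completion of $\eta\otimes\eta + H^{*}g$ at $\rho = 0$ is a circle of cone points locally isometric to $S^3_{\beta}$, compatible with a free period-$2\pi$ isometric action with unit-length orbits, only if $\kappa\in\mathbb{Z}$ (equivalently: constancy of $|V|\equiv 1$ forces the Killing field, transported into the model coordinates of \ref{singsphere}, to be $\pm\beta^{-1}\partial_{\theta_1}\pm\partial_{\theta_2}$, and then the holonomy around $\lbrace \rho=\rho_0\rbrace$ is $2\pi\beta\sin^2\rho_0 \bmod 2\pi$). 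So every puncture must have \emph{trivial} limiting holonomy --- the same condition \ref{gauge condition} as in the paper. Stokes' theorem on the complement of $d$ shrinking discs then forces the total curvature to lie in $2\pi\mathbb{Z}$, contradicting $0<\pi c<2\pi$. Hence no connection with your normalization exists. The repair is to leave the fibre length $\ell$ undetermined: roundness fixes the curvature of the period-$2\pi$ connection $\alpha$ to be $\frac{4\pi}{\ell}\,H^{*}dA_g$, triviality of the limiting holonomies together with the Hopf bundle having degree one forces the total curvature to equal $2\pi$, and solving $\frac{4\pi}{\ell}\,\mathrm{Vol}(g)=2\pi$ gives $\ell = \pi c$, i.e.\ $|V|=c/2$ --- which is precisely the paper's metric $\overline{g} = g + \frac{c^2}{4}\alpha^2$.
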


Recall that \(H\) is a Riemannian submersion from the round \(3\)-sphere of radius \(1\) to \(\mathbb{CP}^1\) with its Fubini-Study metric. The last is identified via stereographic projection with \(S^2(1/2)\), the round \(2\)-sphere of radius \(1/2\). If \( \Omega \subset S^2(1/2) \) is a contractible domain then the universal cover of $ H^{-1}(\Omega) \subset S^3(1)$ is diffeomorphic to \( \Omega \times \mathbb{R} \) and its inherited constant curvature $1$ metric is invariant under translations in the \( \mathbb{R} \) factor. The planes orthogonal to the fibres define a horizontal distribution, hence a connection $\nabla$ on \( \Omega \times \mathbb{R} \). The holonomy of $\nabla$ along a closed curve \( \gamma \subset \Omega \) is equal to the parallel translation by twice the algebraic area bounded by $\gamma$. On the other hand for any $l >0$ we can take the quotient of \( \Omega \times \mathbb{R} \) by \( l \mathbb{Z} \) to obtain a metric $\overline{g}$ of constant curvature $1$ on \( \Omega \times S^1 \) such that all the fibres are geodesics of length $l$. 
Given the metric $g$ on $S^2$ with cone singularities and Gaussian curvature $4$; we can cut $S^2$ along geodesic segments with vertices at all the conical points and obtain a contractible polygon $P$ which can be immersed -by its enveloping map- in $S^2(1/2)$. Consider the metric $\overline{g}$ on \( P \times S^1 \) with $l = 2 \mbox{Area}(P)$. The holonomy of the fibration along the border of $P$ is trivial -as it makes one full rotation- and the gluing of the boundary $P$ which recovers $g$ can be lifted -using a parallel section- to a gluing of \( P \times S^1 \) to obtain the metric $\overline{g}$ of Lemma \ref{3sphere}. The area of \(P\) is equal to \(\pi c\) with \(c\) given by \ref{Number c} and the circle fibres have length \(2\pi c\).

We make the above construction (due to Panov) a bit more explicit and write \(\overline{g}\) in terms of \(g\).  
W.l.o.g. we can  assume that  $ L_j = \lbrace z = a_j w \rbrace$ with $a_j \in \mathbb{C}$  for $j = 1, \ldots , d-1$ and $ L_d= \lbrace w=0 \rbrace$. Set $\xi = z/w$, write $g = e^{2\phi} |d\xi|^2$ with $\phi$ a function of $\xi$ and let $u = \phi -  (\beta-1) \sum_{j=1}^{d-1} \log |\xi - a_j|.$ On the trivial $S^1$-bundle $\mathbb{C} \setminus \lbrace a_1, \ldots , a_{d-1} \rbrace \times S^1 \cong S^3 \setminus L$ with coordinates $(\xi, e^{it})$ we set
	\begin{equation*}
	\alpha = dt + \frac{i}{2c} (\partial u - \overline{\partial} u) . 
	\end{equation*}
Then $\alpha$ is a connection on the Hopf bundle with mild singularities along \(L\)  and satisfies the following conditions:  
\begin{itemize}
	\item  $ d\alpha = (1/2c) H^{*}( K_g dV_g),$ where \(K_g\) and \(dV_g\) denote the Gaussian curvature and area form of \(g\). 
	\item  If $p \in \mathbb{CP}^1$ is a point in $L$ and $\gamma_{\epsilon}$ is a loop that shrinks to $p$ as $\epsilon \to 0$, then the holonomy of $\alpha$ along $\gamma_{\epsilon}$ goes to the identity as $\epsilon \to 0$.
\end{itemize}
The metric of Lemma \ref{3sphere} is
	\begin{equation} \label{metric s3}
	\overline{g} = g + c^2 \alpha^2 . 
	\end{equation}
We note that \(\overline{g}\) as well as \(\alpha\) are uniquely determined up to pulling-back by a gauge transformation.

\subsection {Polyhedral K\"ahler cones}
Let \(\overline{g}\) be given by \ref{metric s3}. On $ (0, \infty) \times \mathbb{C} \setminus \lbrace a_1, \ldots, a_{d-1} \rbrace \times S^1 $ with coordinates $(r, \xi, e^{it})$ set
\begin{equation*}
g_F = dr^2 + r^2 \overline{g} .
\end{equation*}
Write $ \xi = x + iy$. There is a natural Hermitian almost-complex structure given by
$$ I \tilde{  \frac{\partial}{\partial x}  } = \tilde{  \frac{\partial}{\partial y}  } , \hspace{15mm}  I   \frac{\partial}{\partial r}   = \frac{1}{cr}   \frac{\partial}{\partial t}      $$
where
$$  \tilde{  \frac{\partial}{\partial x}  } =  \frac{\partial}{\partial x}  - \alpha \left( \frac{\partial}{\partial x} \right) \frac{\partial}{\partial t} , \hspace{15mm} \tilde{  \frac{\partial}{\partial y}  } =  \frac{\partial}{\partial y}  - \alpha \left( \frac{\partial}{\partial y} \right) \frac{\partial}{\partial t}  $$
are the horizontal lifts of $\partial / \partial x$ and $\partial / \partial y$. Finally set $\omega_F = g_F (I. , .)$. It is straightforward to check that $ \left( (0, \infty) \times \mathbb{C} \setminus \lbrace a_1, \ldots, a_{d-1} \rbrace \times S^1    , g_F, I \right) $ is a K\"ahler manifold and 
	\begin{equation*} 
	\omega_F = \frac{i}{2} \partial \overline{\partial} r^2 .
	\end{equation*}
	
The following lemma follows by straightforward computation

\begin{lemma} The functions 	
	\begin{equation*} 
	z= \xi w, \hspace{4mm} w = c^{1/2c}  r^{1/c} e^{u/2c}  e^{it}
	\end{equation*}
give a biholomorphism between $ (0, \infty) \times \mathbb{C} \setminus \lbrace a_1, \ldots, a_{d-1} \rbrace \times S^1$ with the complex structure $I$ and $\mathbb{C}^2 \setminus L$. Moreover	
	\begin{equation*} 
	\omega_F^2 = | P_d |^{2\beta - 2} \Omega \wedge \overline{\Omega} .
	\end{equation*}
\end{lemma}

We have two natural systems of coordinates: the complex coordinates $(z,w)$ and the spherical coordinates $(r,\theta)$,  where $\theta$  denotes a point in the 3-sphere. For $\lambda >0$ define $D_{\lambda} (r , \theta) = (\lambda r, \theta)$ and $m_{\lambda} (z, w) = (\lambda z, \lambda w)$. Then $D_{\lambda} = m_{\lambda^{1/c}}$ and  $m_{\lambda}^{*} \omega_F = \lambda^{2c} \omega_F$.
Starting from the complex coordinates (as we do in the paper) we let $\xi = z/w$, the asymptotic lines are $ L_j = \lbrace z = a_j w \rbrace$ with $a_j \in \mathbb{C}$  for $j = 1, \ldots , d-1$ and $ L_d= \lbrace w=0 \rbrace$, $g = e^{2\phi} |d\xi|^2$, $u = \phi -  (\beta-1) \sum_{j=1}^{d-1} \log |\xi - a_j|$ and we set 
\begin{equation} \label{form1}
r^2 = \frac{1}{c} |w|^{2c} e^{-u} .
\end{equation}
The function \(r^2\) is smooth on \(\mathbb{C}^2 \setminus L\) and extends continuously to \(\mathbb{C}^2\), it is a K\"ahler potential for the Riemannian cone \(g_F\) and $r$ measures the intrinsic distance to the apex of the cone -located at $0$-. The Reeb vector field \(I(r \partial / \partial r ) \) generates the $S^1$-action $e^{it}(z, w) = (e^{it/c}z, e^{it/c}w)$. The metric is flat on the complement of \(L\) and for every $ 0 \neq p \in L$ we can find holomorphic coordinates $(z_1, z_2)$ on a neighbourhood $U$ around $p$ such that $U \cap L= \lbrace z_1=0 \rbrace$ and $g_F$ agrees with the model  $g_{(\beta)}$
		
\section{ Reference Metrics} \label{Rmetrics}

\subsection{A  diffeomorphism} \label{D}
	
Let  $C= \lbrace P = 0 \rbrace$.  The homogeneous degree $d$ part of $P$ is $ P_d = l_1 \ldots l_d$. We write $l_j = z- a_jw$, for  $j=1, \ldots, d-1$ and $l_d = w$.  W.l.o.g. let us assume that $a_j \not= 0$ for all $j=1, \ldots, d-1$. First we look at the piece of $C$ which is asymptotic to $L_d = \lbrace w =0 \rbrace$.
	
	\begin{lemma} \label{diffeo 1}
		There exist $R, \delta > 0$ and $ \Phi = \Phi (z) : \lbrace |z|>R \rbrace \to \mathbb{C}$ bounded holomorphic, which depend only on $P$,  such that
		$$ C \cap U_{d, \delta, R} = \lbrace \left(z, \Phi(z)\right) \rbrace , $$
		where $U_{d, \delta, R}= \lbrace |w| < \delta |z|, \hspace{3mm} |z|>R \rbrace$.
	\end{lemma}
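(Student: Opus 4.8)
The plan is to realize the branch of $C$ near $L_d = \{w=0\}$ as a graph by applying the holomorphic implicit function theorem in the affine chart at infinity adapted to that line, and then to read off boundedness of the resulting function for free from the location of the branch point.

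First I would pass to the rescaled coordinates $\zeta = w/z$ and $s = 1/z$, under which the region $U_{d,\delta,R}=\{|w|<\delta|z|,\ |z|>R\}$ becomes $\{|\zeta|<\delta,\ 0<|s|<1/R\}$. Writing $P=\sum_{m=0}^{d}P_m$ with $P_m$ homogeneous of degree $m$, homogeneity gives $P_m(z,z\zeta)=z^mP_m(1,\zeta)$, and hence
\[
Q(s,\zeta):=s^{d}P(1/s,\zeta/s)=\sum_{m=0}^{d}s^{\,d-m}P_m(1,\zeta),
\]
which is an honest polynomial in $(s,\zeta)$, holomorphic across $s=0$, with $Q(0,\zeta)=P_d(1,\zeta)=\zeta\prod_{j=1}^{d-1}(1-a_j\zeta)$. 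The zero set $C$ inside $U_{d,\delta,R}$ corresponds exactly to $\{Q=0\}$ in this $(s,\zeta)$-region. Since $L_d$ corresponds to $\zeta=0$ and $Q(0,0)=P_d(1,0)=0$ while $\partial_\zeta Q(0,0)=\partial_\zeta P_d(1,\zeta)\big|_{\zeta=0}=\prod_{j=1}^{d-1}(1-a_j\cdot 0)=1\neq 0$, the holomorphic implicit function theorem yields a unique holomorphic $\zeta=\psi(s)$ for $|s|$ small with $\psi(0)=0$ and $Q(s,\psi(s))\equiv 0$. Undoing the rescaling, I set $\Phi(z)=z\,\psi(1/z)$. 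Because $\psi(0)=0$ we have $\psi(s)=O(s)$, so $\Phi(z)=z\cdot O(1/z)=O(1)$; in fact $\Phi$ extends to a holomorphic function of $1/z$ at $1/z=0$, hence is bounded holomorphic on $\{|z|>R\}$ for $R$ large. All of these data depend only on the coefficients of $P$, which gives the ``depends only on $P$'' clause.

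The step requiring the most care is upgrading the purely local uniqueness furnished by the implicit function theorem to uniqueness in the entire fixed cone $U_{d,\delta,R}$. For this I view $Q(s,\cdot)$ as a degree-$d$ polynomial in $\zeta$: its leading coefficient is the $\zeta^{d}$-coefficient of $P_d(1,\zeta)$, namely $\prod_{j=1}^{d-1}(-a_j)$, which is nonzero (here I use $a_j\neq 0$) and \emph{independent of} $s$. Consequently no root escapes to infinity as $s\to 0$, and there are exactly $d$ roots for every small $s$. By continuity of roots these converge as $s\to 0$ to the $d$ roots $0,1/a_1,\dots,1/a_{d-1}$ of $P_d(1,\cdot)$, which are simple and distinct because the $a_j$ are distinct and nonzero. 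Choosing $\delta$ smaller than half the distance from $0$ to the nearest $1/a_j$ and $R$ large enough, I can arrange that for each $s$ with $|s|<1/R$ the disk $\{|\zeta|<\delta\}$ contains the single root $\psi(s)$ and none of the others, which yields $C\cap U_{d,\delta,R}=\{(z,\Phi(z))\}$.

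Thus the main obstacle is exactly this global uniqueness assertion: the implicit function theorem alone only controls a possibly tiny neighborhood of $\zeta=0$, whereas the lemma claims the graph exhausts $C$ on the whole cone $|w|<\delta|z|$. The mechanism that makes it go through is the constancy in $s$ and non-vanishing of the top $\zeta$-coefficient of $Q$, which prevents roots from running off and reduces the matter to a routine root-counting and continuity argument separating the branch at $\zeta=0$ from the branches near the $1/a_j$.
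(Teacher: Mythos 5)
Your proof is correct, but it takes a genuinely different route from the paper's. The paper stays in the affine coordinates $(z,w)$: it factors $P_z(w)=a(w-h_1(z))\cdots(w-h_d(z))$ over $\lbrace |z|>R\rbrace$, uses the growth bound $|P_d(x)|\geq C_1|x|^d$ off the union of the $d$ disjoint cones $U_{j,\delta,R}$ to force each branch $h_j$ into one of the cones, and then proves that exactly one branch lies in each cone by a degree count: the coefficient of $w$ in $P_z(w)$ is a symmetric function of the roots equal to $z^{d-1}+O(z^{d-2})$, and if two branches in the cone around $L_d$ were both bounded (boundedness there coming from $|l_1\cdots l_{d-1}|\geq c|x|^{d-1}$ on $U_{d,\delta,R}$), this coefficient would be $O(|z|^{d-2})$, a contradiction. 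You instead compactify: in the coordinates $(s,\zeta)=(1/z,w/z)$ the curve meets $\lbrace s=0\rbrace$ at $\zeta=0$, which is a simple zero of $P_d(1,\cdot)$ precisely because the asymptotic lines are distinct and $a_j\neq 0$, so the implicit function theorem applies there, and Rouch\'e/continuity of roots (valid because the $\zeta^d$-coefficient of your $Q(s,\cdot)$ is the constant $\prod_j(-a_j)\neq 0$) upgrades the local graph to the whole cone. Both arguments rest on the same two non-degeneracies — the nonvanishing constant leading $w$-coefficient of $P$ and the simplicity of the roots of $P_d(1,\cdot)$ — but the mechanisms differ, and each buys something. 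Yours avoids having to justify that the roots can be organized into single-valued holomorphic functions $h_j$ on $\lbrace |z|>R\rbrace$, a point the paper asserts at the outset but which strictly needs the roots to be distinct there with trivial monodromy, facts that only emerge at the end of its argument; moreover you get for free that $\Phi$ is a holomorphic function of $1/z$ at $1/z=0$, hence admits a full expansion $\Phi(z)=c_0+c_1z^{-1}+\cdots$, which is stronger than boundedness and could be used to sharpen later decay estimates. The paper's version, on the other hand, handles all $d$ branches simultaneously and outputs the global factorization \ref{FAC}, $P=(l_1+\phi_1)\cdots(l_{d-1}+\phi_{d-1})(w-\Phi)$, which is reused later (e.g. in Lemma \ref{V}); to recover that from your argument one would simply repeat it at each of the $d$ points where the closure of $C$ meets the line at infinity.
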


	\begin{proof}
		For $j=1, \ldots, d-1$  let $S_j$  be an  orthogonal linear transformation that takes $L_d$ to $L_j$.   Write $U_{j; \delta, R} = S_j (U_{d ; \delta, R})$ and $U_{\delta, R} = \cup_{j=1}^d U_{j; \delta, R}$. Taking $\delta$ small enough we can assume that the sets $U_{j, \delta, R}$ are pairwise disjoint. Write
		\begin{equation} \label{pol}
		P = P_d + Q
		\end{equation}
		with $Q$ a polynomial of degree at most $d-1$. On the complement of $U_{\delta, R}$ we have that $ | P_d (x)| \geq C_1 |x|^d$ for some $C_1 > 0$. Since $\deg(Q) \leq d-1$ we can find $C_2 >0$ such that $|Q(x)| \leq C_2 |x|^{d-1}$. It follows that for $R$ big enough
		\begin{equation}\label{intersection}
		C \cap \lbrace |z| > R \rbrace \subset U_{\delta, R} .
		\end{equation}
		
		For each $z$ with $|z| >R$ we write 
		\begin{equation} \label{facto}
		P(z, w)=P_{z}(w) = a (w- h_1 (z)) \ldots (w-h_d (z)) .
		\end{equation}
		With $a =(-1)^{d-1}a_1 \ldots a_{d-1} \not= 0$ and $h_j : \lbrace |z|>R \rbrace \to \mathbb{C}$ holomorphic. 
		It follows from \ref{intersection} that for each $j$,  $ \lbrace (z, h_j(z)), \hspace{2mm} |z|>R \rbrace \subset U_{i, \delta, R}$ for some $i =i(j)$. In particular this implies that there is a constant $A>0$ such that
		\begin{equation} \label{bdh}
		|h_j(z)| \leq A |z|
		\end{equation}
		for $j =1, \ldots,  d$.
		We want to show that we can label the functions $h_j$ in a way such  that $i(j)=j$. First we note that if $i(j_0) = d$ then $h_{j_0}$ is bounded.  Indeed $ | l_1 \ldots l_{d-1} (x) | \geq c |x|^{d-1}$ for some $c>0$ and all $x \in U_{d, \delta, R}$, so that $|h_{j_0} (z)|= | Q|/| l_1 \ldots l_{d-1}| \leq C_2/c$. 
		From  \ref{facto} we get that the coefficient in front of $w$ in the polynomial $P_z(w)$ is given by 
		\begin{equation} \label{lcoef}
		(-1)^{d-1}a \sum_{j=1}^d \Pi_{i \not= j} h_i(z) .
		\end{equation}
		On the other hand \ref{pol} and $ P_d = w (z-a_1 w) \ldots (z-a_{d-1}w)$, imply that \ref{lcoef} is a polynomial of degree $d-1$  in $z$ (with leading term $z^{d-1}$). If we had $ i(j_0) = i(j_1)=d$ for some $j_0 \not= j_1$ then $h_{j_0}$ and $h_{j_1}$ would be bounded. This together with the bound \ref{bdh} would imply that the absolute value of \ref{lcoef} would be bounded by a constant times $|z|^{d-2}$, contradicting \ref{lcoef} being a degree $d-1$ polynomial.
		
		Changing coordinates we can argue the same way for the other asymptotic lines. We conclude that  the map $j \to i(j)$ is injective and we can perform the desired labeling. The lemma follows by setting $\Phi = h_d$.
		In fact $h_j(z) = (1/a_j) z + \phi_j (z)$ with $\phi_j$ bounded for $j=1, \ldots d-1$ so that \ref{facto} gives
		\begin{equation} \label{FAC}
		P(z, w) = (l_1 + \phi_1  ) \ldots (l_{d-1} + \phi_{d-1})(w - \Phi)
		\end{equation} 
		
	\end{proof}
	
Recall that we denote by $\mathcal{D}$ the set of diffeomorphisms $H$ of $\mathbb{C}^2$ which, outside a compact set, map the curve $C$ to the asymptotic lines $L$ and are asymptotic to the identity in the following sense: $H(x)=x +h(x)$, with $D^{\alpha}h(x)= O(|x|^{-|\alpha|})$ for any multi-index $\alpha$. We show that \(\mathcal{D}\) is not empty, moreover we can take the diffeomorphism to be holomorphic in a suitable neighbourhood at infinity of the curve. 
	
	\begin{lemma} \label{diffeo 2}
		Let $\delta>0$ be small enough and $R>0$ big enough, then there exists a diffeomorphism $H \in \mathcal{D}$  such that $ H$ is holomorphic in $U_{\delta/2, 2R}$ and $H$ is the identity outside $U_{\delta, R}$.
	\end{lemma}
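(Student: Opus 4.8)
The plan is to straighten $C$ onto $L$ near infinity by subtracting, in a holomorphic graph direction, the bounded holomorphic defining functions produced by Lemma \ref{diffeo 1}, and then to damp this correction with a carefully chosen bump so that the resulting map is the identity outside $U_{\delta,R}$ while staying holomorphic (precisely where the bump is $\equiv 1$) on $U_{\delta/2,2R}$.

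First I would reduce to a single end. For $\delta$ small the cones $U_{j;\delta,R}$ are pairwise disjoint, so it suffices to define $H$ separately on each of them and set $H=\mathrm{id}$ elsewhere; the pieces do not interact. Using the orthogonal maps $S_j$ carrying $L_d=\{w=0\}$ to $L_j$, I conjugate the construction on the cone around $\{w=0\}$ to the cone around $L_j$; conjugation by a unitary preserves $|x|$ and hence all the bounds defining $\mathcal{D}$. Thus I only need the end asymptotic to $\{w=0\}$, where Lemma \ref{diffeo 1} gives $C\cap U_{d,\delta,R}=\{(z,\Phi(z))\}$ with $\Phi$ bounded holomorphic. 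Since $\Phi$ is bounded and holomorphic on $\{|z|>R\}$, Cauchy's estimate on a disc of radius $\sim|z|$ yields $|\Phi^{(k)}(z)|\le C_k|z|^{-k}$ for all $k\ge 0$.

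Second, I set $H(z,w)=(z,\,w-\chi(z,w)\,\Phi(z))$, where $\chi=\chi_1(|z|)\,\chi_2(|w|/|z|)$ with $\chi_1$ a smooth cutoff equal to $1$ for $|z|\ge 2R$ and $0$ for $|z|\le R$, and $\chi_2$ equal to $1$ for $s\le\delta/2$ and $0$ for $s\ge\delta$. On $U_{d;\delta/2,2R}$ we have $\chi\equiv 1$, so there $H=(z,w-\Phi(z))$ is holomorphic and sends the graph $\{w=\Phi(z)\}$ into $\{w=0\}=L_d$, while $\chi\equiv 0$ off $U_{d;\delta,R}$ gives $H=\mathrm{id}$ there. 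Since $|w|/|z|\le M/|z|<\delta/2$ once $|z|$ is large (here $M=\sup|\Phi|$), the end of $C$ lies inside the inner cone, so $H(C\setminus K)\subset L$ for a suitable compact $K$. The decay estimates for $\mathcal{D}$ follow from $|x|\sim|z|$ on each cone together with the homogeneity of degree $0$ of $s=|w|/|z|$, which gives $|\nabla^k\chi_2|\le C_k|x|^{-k}$ (and $\chi_2\equiv 1$ near $w=0$, so the non-smoothness of $|w|$ there is harmless), combined with the Cauchy bounds on $\Phi$: writing $g=\chi\Phi$ one gets $|g|\le M$ and $|\nabla^k g|\le C_k|x|^{-k}$ for $k\ge 1$, which is exactly $|H(x)-x|\le A_0$, $|DH-\mathrm{Id}|\le A_1|x|^{-1}$ and $|D^\alpha H|\le A_j|x|^{-j}$.

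Finally I would check that $H$ is a global diffeomorphism. Writing $H=\mathrm{id}-G$ with $G=(0,g)$, I have $|G|\le M$ and, from the bounds above with $|\chi_2'|\sim\delta^{-1}$ and $|z|>R$, the estimate $\sup_{\mathbb{C}^2}|DG|\le C\,M\,\delta^{-1}R^{-1}$ (and $M$ does not grow with $R$). Choosing $R$ large relative to $\delta^{-1}$ makes $\sup|DG|<1$; then $\mathrm{id}-G$ is injective (as $|x-y|=|G(x)-G(y)|\le\sup|DG|\,|x-y|$ forces $x=y$), a local diffeomorphism (since $I-DG$ is invertible), and surjective (the fixed-point equation $x=b+G(x)$ has a unique solution because $G$ is bounded and contracting), hence a diffeomorphism of $\mathbb{C}^2$. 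Because the smooth cutoff vanishes to infinite order across $\partial U_{d;\delta,R}$, the map glues smoothly with the identity, and repeating the construction on the disjoint cones $U_{j;\delta,R}$ completes the definition of $H$. The main obstacle is the tension between two scales: the angular cutoff $\chi_2$ has width $\sim\delta$, so its gradient is of size $\delta^{-1}|z|^{-1}$, and keeping $\mathrm{id}-G$ a global diffeomorphism (with honest $\mathcal{D}$-constants) forces $R$ large compared with $\delta^{-1}$; verifying that this one choice simultaneously secures holomorphicity on the inner cone, the decay estimates, and global invertibility is the crux.
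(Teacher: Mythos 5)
Your construction is essentially identical to the paper's: you damp the graph function $\Phi$ from Lemma \ref{diffeo 1} by a product of a radial cutoff and an angular cutoff in $|w|/|z|$, subtract it in the fiber direction, derive the $\mathcal{D}$-bounds from Cauchy estimates on $\Phi$ and the degree-zero homogeneity of the angular cutoff, and handle the $d$ ends separately on the disjoint cones. The only difference is that you explicitly verify global invertibility via the contraction estimate $\sup|DG|\leq CM\delta^{-1}R^{-1}<1$, a point the paper leaves implicit in the hypothesis that $\delta$ is small and $R$ is large; this is a worthwhile addition rather than a deviation.
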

	
	\begin{proof}
		Let $\chi = \chi (t)$ be a smooth cut-off function with $\chi(t) = 1$ for $t\leq1$ and $\chi(t)=0$ for $t\geq2$. We first define $H$ in the region asymptotic to $L_d$. Let
		\begin{equation}
		h(z, w) =\chi \left( \frac{2|w|}{\delta |z|} \right) (1- \chi) (R^{-1} |z|) . 
		\end{equation}
		It follows that $h=1$ on $U_{d, \delta/2, 2R}$, $h=0$ outside $U_{d, \delta, R}$ and $ |D^{\alpha} h (x)| \leq C_{|\alpha|} |x|^{-|\alpha|}$ for any multi-index $\alpha$. We set
		\begin{equation}
		H_d (z, w) = (z, w- h \Phi) .
		\end{equation} 
		Since $\Phi$ is a bounded holomorphic function of $z$ and in the region $U_{d, \delta, R}$ we have $|z| \geq c |(z, w)|$ for some $c>0$, we conclude that  there are constants $A_j$ such that $|H_d(x) -x| \leq A_0$ ,  $|DH_d(x) -Id| \leq A_1|x|^{-1}$ and $|D^{\alpha} H_d(x)| \leq A_j |x|^{-j}$ for all $x \in \mathbb{C}^2$ and $j = |\alpha| \geq2$.
		We proceed similarly for the other asymptotic regions, and in an obvious notation we set
		\begin{equation*}
		H = H_1 \circ \ldots \circ H_d .
		\end{equation*}	
	\end{proof}

From now on we fix $\delta, R >0$ and $H$ as in Lemma \ref{diffeo 2}.

\subsection {Construction of $\omega$} \label{REF} 

First we derive some easy consequences of the scaling property of the K\"ahler potential of \(g_F\)
	\begin{equation} \label{Scale}
	r^2 \circ m_{\lambda} = \lambda^{2c} r^2 .
	\end{equation}
Since $\omega_F$ is positive we can find $\kappa_1>0$ such that $\omega_F \geq \kappa_1 \omega_{euc}$ on the euclidean unit sphere, by scaling we have that for every $p \in \mathbb{C}^2$
	\begin{equation} \label{low a}
	\omega_F (p) \geq \kappa_1 |p|^{2c-2} \omega_{euc} .
	\end{equation} 
On the other hand it follows from the continuity of $r$ that there is \(\kappa_2 >0\) such that 
	\begin{equation} \label{r}
	\kappa_2^{-1} |p|^c \leq r(p) \leq \kappa_2 |p|^c . 
	\end{equation}
Differentiating  equation \ref{Scale} on $\mathbb{C}^2 \setminus L$ we get that $D^{\alpha}r \circ m_{\lambda} = \lambda^{2c-| \alpha |} D^{\alpha} r$ for any multi-index $\alpha$. For $\epsilon >0$  denote ${U}_{\epsilon} = U_{\epsilon, 0}$, with the notation as in the proof of Lemma \ref{diffeo 1}. From the smoothness of $r$ on the complement of $L$ it follows that
	\begin{equation} \label{deriv}
	|D^{\alpha} r^2 (p) | \leq A |p|^{2c-|\alpha|}
	\end{equation} 
on $\mathbb{C}^2 \setminus {U}_{\epsilon}$, where the constant $A$  depends on $\epsilon$ and $|\alpha|$. It follows from \ref{deriv} and \ref{low a} that in the complement of $U_{\epsilon}$ there exist $\kappa_{\epsilon} > 1$ such that 
	\begin{equation} \label{compmet}
	\kappa_{\epsilon}^{-1} |p|^{2c-2} \omega_{euc} \leq \omega_F (p) \leq  \kappa_{\epsilon} |p|^{2c-2} \omega_{euc} .
	\end{equation}
	
Let us denote by $I$ the standard complex structure on $\mathbb{C}^2$ and let $G$ be the inverse of $H$.
	
	\begin{lemma}
		\begin{equation}
		|G^{*} I - I |_{g_F} = O (r^{-1/c}) .
		\end{equation}
	\end{lemma}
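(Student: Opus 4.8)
The plan is to reduce the whole statement to a euclidean computation, exploiting two facts: the standard complex structure $I$ is a \emph{constant} tensor in the coordinates $z,w$, and the norm of a $(1,1)$-tensor (an endomorphism of the tangent bundle) is insensitive to conformal rescalings of the background metric. First I would record the behaviour of $G=H^{-1}$. Since $H\in\mathcal{D}$ we have $H(x)=x+O(1)$ and $DH_x=\mathrm{Id}+O(|x|^{-1})$, so for $|x|$ large $DH_x$ is invertible with $(DH_x)^{-1}=\mathrm{Id}+O(|x|^{-1})$; because $|H(x)|$ and $|x|$ are comparable for large $|x|$, this transfers to $DG_p=\mathrm{Id}+O(|p|^{-1})$ (in fact $G\in\mathcal{D}$, but only this first-order bound is needed).

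The key algebraic step is that, viewing $I$ as a section of $\mathrm{End}(T\mathbb{C}^2)$, the pullback is $(G^{*}I)_p=(DG_p)^{-1}\,I\,(DG_p)$, using $I_{G(p)}=I$ by constancy. Writing $DG_p=\mathrm{Id}+F_p$ with $|F_p|_{euc}=O(|p|^{-1})$ gives $G^{*}I-I=(DG_p)^{-1}[I,F_p]$, and since $|(DG_p)^{-1}|$ and $|I|$ are bounded I obtain $|G^{*}I-I|_{g_{euc}}=O(|p|^{-1})$, where $g_{euc}$ is the euclidean metric.

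Next I would localize the support. As $H=\mathrm{Id}$ off $U_{\delta,R}$ (so $G^{*}I=I$) and $H$ is holomorphic on $U_{\delta/2,2R}$ (so again $G^{*}I=I$), the tensor $G^{*}I-I$ is supported in $U_{\delta,R}\setminus U_{\delta/2,2R}$. Outside a sufficiently large euclidean ball this support is pushed to the edges of the cones: there every point satisfies a ratio bound of the form $|w|\geq (\delta/2)|z|$ (and the analogue for the other asymptotic regions), hence lies in $\mathbb{C}^2\setminus U_\epsilon$ for a fixed $\epsilon>0$, uniformly bounded away from the singular set $L$. This is exactly where the comparison \ref{compmet} is available. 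Finally, for the norm conversion I use that an endomorphism norm satisfies $|T|_{\lambda g}=|T|_{g}$; by \ref{compmet} the metric $g_F$ is uniformly equivalent to $|p|^{c-2}g_{euc}$ on $\mathbb{C}^2\setminus U_\epsilon$, and uniformly equivalent metrics give uniformly comparable endomorphism norms, so $|G^{*}I-I|_{g_F}$ is comparable to $|G^{*}I-I|_{|p|^{c-2}g_{euc}}=|G^{*}I-I|_{g_{euc}}=O(|p|^{-1})$. Since $r^2\asymp|p|^c$ by \ref{r}, one has $|p|^{-1}\asymp r^{-2/c}$, which yields the claimed $O(r^{-2/c})$.

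The genuinely non-formal points, where I expect to spend the care, are twofold. The first is confirming that, near infinity, the support of $G^{*}I-I$ is uniformly separated from $L$ — this is what lets me replace the singular metric $g_F$ by the euclidean cone metric $|p|^{c-2}g_{euc}$ via \ref{compmet}, and it relies on the explicit structure of $H$ from the previous lemma (holomorphic on the inner cones, identity outside). The second is the clean cancellation of the conformal factor $|p|^{c-2}$, which works precisely because $G^{*}I-I$ is an endomorphism and its norm is scale-invariant; everything else is routine bookkeeping with the $\mathcal{D}$-estimates on $H$ and its inverse.
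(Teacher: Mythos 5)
Your proof is correct and follows essentially the same route as the paper: a euclidean bound $|G^{*}I-I|_{g_{euc}}=O(|p|^{-1})$, localization of the support away from $L$ (where $G$ is holomorphic or the identity), conformal invariance of the endomorphism norm together with \ref{compmet}, and \ref{r} to convert $|p|^{-1}$ into $r^{-2/c}$. The only difference is presentational: where the paper says the difference is ``given basically by $\overline{\partial}G$,'' you make this precise via $G^{*}I-I=(DG_p)^{-1}[I,F_p]$, which is a welcome clarification rather than a new idea.
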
 
	
	\begin{proof}
		First we note that $|G^{*} I - I |_{g_{euc}} = O (|p|^{-1})$,  since $ G^{*} I - I $  is  basically given by $\overline{\partial} G$. From \ref{r} we can replace $O(|p|^{-1})$ with $O(r^{-1/c})$. Secondly, there exists $\epsilon>0$ such that -outside a compact set- $G$ is holomorphic in $U_{2\epsilon}$. So $G^{*} I = I$ in $U_{2\epsilon}$.
		In a vector space with an inner product the norm of an endomorphism doesn't change if we scale the inner product by a positive constant. Hence $|G^{*} I - I |_{|p|^{2c-2}g_{euc}} = O (|p|^{-1})$. Finally \ref{compmet} gives the lemma.
		
	\end{proof}
	
	We move on and define
	\begin{equation} \label{ETA}
	\eta =\frac{ i}{2} \partial \overline{\partial} (r^2 \circ H) .
	\end{equation}

	\begin{lemma} \label{eta}
		There exists a compact $K$ such that $\eta >0$ outside $K$. Moreover,
		$$ | G^{*}\eta -  \omega_F |_{g_F} = O(r^{-1/c}) . $$
		
	\end{lemma}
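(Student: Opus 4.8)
The plan is to reduce the statement to the comparison of complex structures already obtained in the preceding lemma. Write the potential operator as $\tfrac{i}{2}\partial\overline{\partial}f=-\tfrac14\,d\!\left(df\circ I\right)$ for a real function $f$, where $I$ is the standard complex structure. For any diffeomorphism $G$ the one-form $df\circ I$ pulls back as $G^{*}(df\circ I)=d(f\circ G)\circ G^{*}I$, with $G^{*}I=(DG)^{-1}I\,DG$ (chain rule); applying $d$, which commutes with pullback, and using $G=H^{-1}$ so that $(r^{2}\circ H)\circ G=r^{2}$, we get
\[
G^{*}\eta=-\tfrac14\,d\!\left(d(r^{2})\circ G^{*}I\right).
\]
Subtracting $\omega_{F}=-\tfrac14\,d\!\left(d(r^{2})\circ I\right)$ yields the key identity
\[
G^{*}\eta-\omega_{F}=-\tfrac14\,d\!\left(d(r^{2})\circ J\right),\qquad J:=G^{*}I-I,
\]
for which the preceding lemma already provides $|J|_{g_{F}}=O(r^{-2/c})$.

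Next I would localize. On $U_{2\epsilon}$ the map $G$ is holomorphic (outside a compact set), hence $G^{*}I=I$, so $J\equiv0$ and the right-hand side vanishes identically there; it therefore suffices to bound $\Theta:=d(d(r^{2})\circ J)$ on $\mathbb{C}^{2}\setminus U_{2\epsilon}$. This is exactly the region where $g_{F}$ is nonsingular and uniformly comparable to the scaled Euclidean metric: by \ref{compmet} one has $g_{F}\simeq|p|^{c-2}\,\omega_{euc}$ there, by \ref{deriv} the Euclidean derivatives satisfy $|D^{\alpha}r^{2}|\le A|p|^{c-|\alpha|}$, and by \ref{r}, $|p|\simeq r^{2/c}$. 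The avoidance of a neighbourhood of $L$, where $g_{F}$ degenerates and the Euclidean comparison fails, is precisely what the vanishing of $J$ near $L$ buys us.

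I would then carry out the estimate in Euclidean coordinates. The two-form $\Theta$ has components built from $(\partial\partial r^{2})\,J$ and $(\partial r^{2})\,(\partial J)$. Using $DG-\mathrm{Id}=O(|p|^{-1})$ and $D^{2}G=O(|p|^{-2})$ from the definition of $\mathcal{D}$, together with $J=(DG)^{-1}I\,(DG)-I$, gives the Euclidean bounds $J=O(|p|^{-1})$ and $\partial J=O(|p|^{-2})$; combined with $\partial r^{2}=O(|p|^{c-1})$ and $\partial\partial r^{2}=O(|p|^{c-2})$ from \ref{deriv}, every Euclidean component of $\Theta$ is $O(|p|^{c-3})$. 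Converting to the $g_{F}$-norm of a two-form costs a factor $|p|^{2-c}$ coming from the two inverse-metric contractions (each $g_{F}^{-1}\simeq|p|^{2-c}\omega_{euc}^{-1}$), so $|\Theta|_{g_{F}}=O(|p|^{-1})=O(r^{-2/c})$, which is the asserted bound. Finally, $|G^{*}\eta-\omega_{F}|_{g_{F}}=O(r^{-2/c})\to0$ forces $G^{*}\eta\ge\tfrac12\omega_{F}>0$ for $r$ large; pulling back by $H$, which is holomorphic where it is not the identity and is otherwise a uniformly small perturbation of it, transfers this positivity to $\eta>0$ outside a compact set $K$.

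The main obstacle is bookkeeping rather than conceptual: one must verify that the two independent contributions $(\partial\partial r^{2})J$ and $(\partial r^{2})(\partial J)$ land at \emph{exactly} the same Euclidean rate $O(|p|^{c-3})$, and that the Euclidean-to-$g_{F}$ conversion for a two-form contributes precisely $|p|^{2-c}$, so that the final exponent is $-1$ in $|p|$, i.e. $-2/c$ in $r$. The one genuinely delicate point is the passage to $g_{F}$ norms near $L$: because $g_{F}$ is singular along $L$ the comparison \ref{compmet} breaks down there, and the argument only closes because $J$, and hence $\Theta$, vanishes identically on the conical neighbourhood $U_{2\epsilon}$ of $L$ at infinity.
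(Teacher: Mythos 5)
Your proof is correct, and it takes a genuinely different route from the paper's. The paper proceeds by brute force: it expands $\partial\overline{\partial}(r^{2}\circ H)$ at a point $x_{0}\notin U'$ by the chain rule (twenty terms), recognizes the single term that reproduces $\omega_{F}$ up to a factor $1+O(|x|^{-1})$, bounds all the others by $O(|x|^{c-3})$ using \ref{deriv} and the $\mathcal{D}$-bounds, and then converts to the $g_{F}$-norm via \ref{low a} and \ref{r} --- the same three ingredients you use. What you do differently is to package the entire expansion into the exact identity
\begin{equation*}
G^{*}\eta-\omega_{F}=-\tfrac{1}{4}\,d\bigl(d(r^{2})\circ J\bigr),\qquad J=G^{*}I-I,
\end{equation*}
so that the error is carried entirely by $J$: the estimate collapses to the two terms $(D^{2}r^{2})\,J$ and $(Dr^{2})\,(DJ)$, both $O(|p|^{c-3})$ in the Euclidean norm, and the vanishing of the difference near $L$ at infinity --- which the paper records separately as $G^{*}\eta=\omega_{F}$ on $H(U')$ --- becomes the tautology that $J\equiv 0$ where $G$ is holomorphic. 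This is cleaner, it makes structural use of the complex-structure comparison (which the paper proves immediately beforehand but never actually invokes in this proof), and it adapts directly to the higher-derivative decay asserted in the Remark following the lemma. Two points deserve a line more than you give them. First, the bounds $DG-\mathrm{Id}=O(|p|^{-1})$ and $D^{2}G=O(|p|^{-2})$ are not literally ``from the definition of $\mathcal{D}$'': it is $H$, not $G=H^{-1}$, that lies in $\mathcal{D}$, so these follow by differentiating $H\circ G=\mathrm{id}$ and using $|G(p)|\simeq|p|$ (the paper uses the same fact tacitly). Second, in the positivity step, transferring $G^{*}\eta\geq\tfrac{1}{2}\omega_{F}$ to $\eta>0$ requires $DH$ to commute with $I$ either exactly (on $U'$, by holomorphicity) or up to an $O(|x|^{-1})$ error (elsewhere, where $g_{F}$ is uniformly comparable to $|x|^{c-2}g_{euc}$, so the error term is dominated); note that $H$ is neither holomorphic nor the identity on the transition region $U_{\delta,R}\setminus U_{\delta/2,2R}$, so your parenthetical description of $H$ there is slightly off, although the argument you sketch does close.
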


	\begin{proof}
		Denote $H(z, w) = (u, v)$, so that $r^2 = r^2 (u, v)$. Write $U= U_{\delta, R}$ and $U'=U_{\delta/2, 2R}$, the subsets introduced in Lemma \ref{diffeo 2}.
		We remove compact sets whenever necessary. Note that $G^{*} \eta = \omega_F$ in  $H(U')$, clearly we can pick $\epsilon >0$ such that $ {U}_{\epsilon} \subset H(U')$. In $\mathbb{C}^2 \setminus H(U')$ we are then able to use the bounds \ref{deriv}.  Set $p_0= (u_0, v_0) = H(x_0)$ with $x_0 =(z_0, w_0) \notin U'$. First we  compute $\eta(x_0)$
		
		$$ \frac{\partial}{\partial z} (r^2 \circ H) = \frac{\partial r^2}{\partial u} \frac{\partial u}{\partial z} + \frac{\partial r^2}{\partial \overline{u}} \frac{\partial \overline{u}}{\partial z} + \frac{\partial r^2}{\partial v}  \frac{\partial v}{\partial z}     + \frac{\partial r^2}{\partial \overline{v}} \frac{\partial \overline{v}}{\partial z}  ,     $$
		
		$$ \frac{\partial^2}{\partial z \partial \overline{z}} (r^2 \circ H) = \frac{\partial^2 r^2}{\partial^2 u} \frac{\partial u}{\partial \overline{z}} \frac{\partial u}{\partial z}       +    \frac{ \partial^2 r^2}{\partial u \partial \overline{u}}    \frac{\partial \overline{u}}{\partial \overline{z}} \frac{\partial u}{\partial z}  +   \frac{ \partial^2 r^2}{\partial u \partial v}            \frac{\partial v}{\partial \overline{z}}  \frac{\partial u}{\partial z}        +   \frac{ \partial^2 r^2}{\partial u \partial \overline{v}}        \frac{\partial \overline{v}}{\partial \overline{z}}            \frac{\partial u}{\partial z} +  \frac{\partial r^2}{\partial u}   \frac{ \partial^2 u}{\partial z \partial \overline{z}}       $$
		
		$$ + ( \ldots ) ,  $$
		where $(\ldots)$ consists of 15 terms that the reader can figure out. The second term is equal to 
		$$\frac{ \partial^2 r^2}{\partial u \partial \overline{u}} (p_0)  \left(1 + O(|x|^{-1}) \right) .$$ 
		The first, third and fourth terms can be bounded by $ A |x|^{2c-2} |x|^{-1}$ and the fifth by $A|x|^{2c-1} |x|^{-2}$ for some constant $A>0$. It is easy to see that the remaining 15 terms can be bounded by  $ A |x|^{2c-2} |x|^{-1}$ (the ones which contain second derivatives of $r^2$) or $A|x|^{2c-1} |x|^{-2}$ (the ones which contain second derivatives of $H$). We conclude that we can bound all this terms by a constant times $|x|^{2c-3}$.
		We argue similarly for the other derivatives in $\partial \overline{\partial} (r^2 \circ H)$ to conclude that
		$$ G^{*} \eta (p_0) = \omega_F (p_0) + O(|x|^{2c-3}) dz d\overline{z} +  O(|x|^{2c-3}) dz d\overline{w}  + O(|x|^{2c-3}) dw d\overline{z} + O(|x|^{2c-3}) dw d\overline{w}  $$
		Note that $dzd\overline{z} = dud\overline{u} + \nu $ where $\nu$ is a 2-form with $|\nu|_{euc} = O(|p|^{-1})$. From \ref{low a} we get $|dud\overline{u}|_{g_F} = O (|x|^{2-2c})$. We argue equally for the other terms to conclude that
		\begin{equation} \label{Dcay}
		| G^{*} \eta - \omega_F |_{g_F} (p_0) = O(|p_0|^{-1}) .
		\end{equation}
		The result follows from \ref{r}.
		
	\end{proof}
	
	\begin{remark}
		
		As we already said, $ G^{*}(\eta) = \omega_F$ on a region $U_{\delta', R'}$ for some $\delta', R' >0$. In the complement of this region one can extend \ref{Dcay} to
		\begin{equation}
		| \nabla^i (G^{*} \eta - \omega_F) |_{g_F} (p_0) = O(r^{-(1/c) -i} ) , 
		\end{equation}
		where $\nabla$ is the Levi-Civita connection of $g_F$.
	\end{remark}
	
	Let $h$ be a cut-off function with $h=1$ on $B_N$ (the euclidean ball of radius $N$, say) and $h=0$ on $B_{N+1}^c$ where  $N$ is large enough so that $ C \cap B_N^c \subset U'$ and $\eta >0$ outside $B_N$. Consider 
	\begin{equation} \label{PR}
	\omega' = \frac{i}{2} \partial \overline{\partial} \left(h|P|^{2\beta} + (1-h)(r^2 \circ H) \right) .
	\end{equation}
	Note that $\omega' = \eta >0$ on $B_{N+1}^c$. On the other hand 
	$$ \omega' =\frac{i}{2} \partial \overline{\partial} |P|^{2\beta}= \beta^2 |P|^{2\beta -2} \frac{i}{2} \partial P \wedge \overline{\partial P} \geq 0$$ 
	on $B_N$. Finally consider the annulus $B_{N+1} \setminus B_N $ 
	
	\begin{claim}
		There is $a>0$ such that $\omega' \geq -a \omega_{euc}$ on $B_{N+1} \setminus B_N$.
	\end{claim}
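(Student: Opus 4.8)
The plan is to make the estimate local. The annulus $\overline{B_{N+1}\setminus B_N}$ is compact, and the only possible source of a divergent negative eigenvalue of $\omega'$ is the curve $C$, along which both potentials $|P|^{2\beta}$ and $r^2\circ H$ are singular. Away from $C$ everything is smooth: by the choice of $N$ we have $C\cap B_N^c\subset U'$, the branches of $C$ are straightened onto $L$ by $H$, so the singular locus of $r^2\circ H$ in the annulus is exactly $C$; hence on the compact set obtained by deleting a fixed small neighborhood of $C$ the function $h|P|^{2\beta}+(1-h)(r^2\circ H)$ is smooth and $\omega'$ is a bounded smooth $(1,1)$-form there, giving $\omega'\geq -a_1\omega_{euc}$ for some $a_1>0$. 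By compactness it remains to treat a small neighborhood of each point $p\in C\cap\overline{B_{N+1}\setminus B_N}$, finitely many of which cover $C$ in the annulus.

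Next I fix such a $p$ and choose holomorphic coordinates $(z_1,z_2)$ centered at $p$ with $z_1=P$, so that $C=\{z_1=0\}$ and $|P|^{2\beta}=|z_1|^{2\beta}$ exactly. Since $p\in B_N^c\subset U'$, the map $H$ is holomorphic near $p$ and carries $C$ into $L$. By the local model in Proposition \ref{flat metrics}, near $H(p)\in L$ there are coordinates $(\zeta_1,\zeta_2)$ with $g_F=|\zeta_1|^{2\beta-2}|d\zeta_1|^2+|d\zeta_2|^2=\frac{i}{2}\partial\overline\partial(\beta^{-2}|\zeta_1|^{2\beta}+|\zeta_2|^2)$; as $\omega_F=\frac{i}{2}\partial\overline\partial r^2$, the difference $r^2-(\beta^{-2}|\zeta_1|^{2\beta}+|\zeta_2|^2)$ is pluriharmonic, hence smooth, so $r^2=\beta^{-2}|\zeta_1|^{2\beta}+(\mathrm{smooth})$ near $H(p)$. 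Because $\zeta_1\circ H$ is a holomorphic defining function for $C$, we have $\zeta_1\circ H=\kappa\,z_1$ with $\kappa$ holomorphic and nowhere zero, whence $r^2\circ H=F_2\,|z_1|^{2\beta}+(\mathrm{smooth})$ with $F_2=\beta^{-2}|\kappa|^{2\beta}>0$ smooth. Absorbing the smooth cut-off $h$ into the coefficients, near $p$ I obtain
\[
h|P|^{2\beta}+(1-h)(r^2\circ H)=\tilde F\,|z_1|^{2\beta}+\tilde S,\qquad \tilde F=h+(1-h)F_2>0,
\]
with $\tilde F$ smooth and strictly positive and $\tilde S$ smooth; since $\frac{i}{2}\partial\overline\partial\tilde S$ is bounded, the problem reduces to bounding $\frac{i}{2}\partial\overline\partial(\tilde F|z_1|^{2\beta})$ from below.

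Finally I compute the entries of the Hermitian matrix $(g_{j\bar k})$ of $\frac{i}{2}\partial\overline\partial(\tilde F|z_1|^{2\beta})$ in the coordinates $(z_1,z_2)$. Using $\frac{i}{2}\partial\overline\partial|z_1|^{2\beta}=\beta^2|z_1|^{2\beta-2}\frac{i}{2}dz_1\wedge d\bar z_1$ together with the Leibniz rule one finds
\[
g_{1\bar 1}=\beta^2\tilde F|z_1|^{2\beta-2}+O(|z_1|^{2\beta-1}),\qquad g_{1\bar 2}=\overline{g_{2\bar1}}=O(|z_1|^{2\beta-1}),\qquad g_{2\bar2}=O(|z_1|^{2\beta}),
\]
the point being that the most singular term, of order $|z_1|^{2\beta-2}$, occurs only in the $(1,\bar1)$ slot and is positive, while all off-diagonal and tangential terms are of the milder orders $|z_1|^{2\beta-1}$ and $|z_1|^{2\beta}$. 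The main obstacle is that for $\beta<1/2$ the off-diagonal entries $g_{1\bar2}$ themselves blow up as $z_1\to0$, so a naive eigenvalue bound fails. The resolution is a Schur-complement (equivalently Cauchy–Schwarz) estimate: since $g_{1\bar1}\geq\frac12\beta^2(\inf\tilde F)|z_1|^{2\beta-2}$ for $|z_1|$ small, the smallest eigenvalue is comparable to
\[
g_{2\bar2}-\frac{|g_{1\bar2}|^2}{g_{1\bar1}}=O(|z_1|^{2\beta})-O\!\left(\frac{|z_1|^{2(2\beta-1)}}{|z_1|^{2\beta-2}}\right)=O(|z_1|^{2\beta}),
\]
which tends to $0$ as $z_1\to0$ because $0<\beta<1$. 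Hence $\frac{i}{2}\partial\overline\partial(\tilde F|z_1|^{2\beta})$, and therefore $\omega'$, is bounded below by a constant multiple of $\frac{i}{2}(dz_1\wedge d\bar z_1+dz_2\wedge d\bar z_2)$ near $p$; as the latter is uniformly comparable to $\omega_{euc}$ on the chart, this yields $\omega'\geq -a_2\omega_{euc}$ near $p$. Taking $a$ to be the maximum of $a_1$ and the constants arising from the finitely many charts completes the proof.
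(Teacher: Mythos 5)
Your proof is correct, and its skeleton agrees with the paper's: reduce by compactness to small coordinate charts around points of $C$ in the annulus, and there split the potential as $\tilde F|z_1|^{2\beta}+\tilde S$ with $\tilde F>0$ smooth and $\tilde S$ smooth (the paper writes this as $u=|z_1|^{2\beta}\left(h|f|^{2\beta}+1-h\right)$ plus a smooth term, using $P=fz_1$). Where you genuinely diverge is in the final lower bound for $i\partial\overline{\partial}\left(\tilde F|z_1|^{2\beta}\right)$. The paper uses the identity $i\partial\overline{\partial}u=iu\,\partial\log u\wedge\overline{\partial}\log u+u\,i\partial\overline{\partial}\log u$, discards the first (nonnegative) term, and observes that $i\partial\overline{\partial}\log u=i\partial\overline{\partial}\log F$ because $\log|z_1|$ is pluriharmonic off $C$; this gives the bound in two lines, the dangerous mixed terms being absorbed into the manifestly nonnegative square. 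You instead compute the Hessian entries and control the blow-up of the off-diagonal entry $g_{1\overline{2}}=O(|z_1|^{2\beta-1})$ against the large positive entry $g_{1\overline{1}}\sim\beta^2\tilde F|z_1|^{2\beta-2}$ via a Schur-complement/Cauchy--Schwarz estimate; this is the same cancellation done by hand, which costs more computation but makes explicit why small $\beta$ (where $g_{1\overline{2}}$ is unbounded) causes no trouble. Two further differences deserve comment. First, you normalize $z_1=P$ (so $|P|^{2\beta}$ is exact) and must then relate $r^2\circ H$ to the model potential, whereas the paper chooses coordinates in which $r^2\circ H=|z_1|^{2\beta}+|z_2|^2$ exactly and puts the nonvanishing factor into $P=fz_1$. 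Second, in doing so you assert that $r^2-\left(\beta^{-2}|\zeta_1|^{2\beta}+|\zeta_2|^2\right)$ is ``pluriharmonic, hence smooth'': strictly, it is pluriharmonic only off $L$, and smoothness across $L$ requires a removable-singularity argument (the difference is continuous, hence bounded, and bounded harmonic functions extend across a complex hypersurface, after which $i\partial\overline{\partial}$ of the extension vanishes by continuity); alternatively, the explicit cone model shows $r^2$ equals the model potential on the nose, so the correction term can be avoided altogether. With that one point justified, your argument is complete.
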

	
	\begin{proof}
		Indeed, for $ x \in C \cap (B_{N+1} \setminus B_N)$ we can find holomorphic coordinates $(z_1, z_2)$ such that $C= \lbrace z_1 =0 \rbrace$ and $ r^2 \circ H = |z_1|^{2\beta} + |z_2|^2$. In these coordinates $ P= f z_1$ for some non-vanishing holomorphic $f$. Then we have $2\omega' = i \partial \overline{\partial} \left( h(|f|^{2\beta}|z_1|^{2\beta}) + (1-h) (|z_1|^{2\beta} + |z_2|^2) \right)=$ (smooth) $+ i\partial \overline{\partial} u$,  where $ u= |z_1|^{2\beta} ( h|f|^{2\beta} + 1 -h )$. On a smaller neighborhood we can assume $|f|^{2\beta} \geq \epsilon >0$ so that $ i\partial \overline{\partial} u = i u \partial \log u \wedge \overline{\partial} \log u + u i \partial \overline{\partial} \log u \geq u  i\partial \overline{\partial} \log F$ where $ F = h|f|^{2\beta} + 1 -h$. Note that $F$ is smooth and $F \geq \min \lbrace \epsilon, 1 \rbrace$ to conclude the claim.
	\end{proof}

	\begin{lemma} \label{AM}
		There is a K\"ahler metric $\omega$ on $\mathbb{C}^2$ with cone singularities of angle $2 \pi \beta$ along $C$ such that $\omega = \eta$ outside a compact set.
	\end{lemma}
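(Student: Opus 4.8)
The plan is to start from the form $\omega'$ of \ref{PR} and repair its failure of positivity on the compact region $\overline{B_{N+1}}$ by adding $\frac{i}{2}\partial\overline{\partial}\phi$ for a suitable function $\phi$, chosen so that $\frac{i}{2}\partial\overline{\partial}\phi$ has compact support and hence $\omega:=\omega'+\frac{i}{2}\partial\overline{\partial}\phi$ still equals $\eta$ outside a compact set. Recall what is already known about $\omega'$: it equals $\eta>0$ on $B_{N+1}^{c}$, it equals $\frac{i}{2}\partial\overline{\partial}|P|^{2\beta}\ge 0$ on $B_N$, and by the preceding Claim it satisfies $\omega'\ge -a\,\omega_{euc}$ on the annulus $B_{N+1}\setminus B_N$. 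Thus $\omega'\ge -a\,\omega_{euc}$ on all of $\overline{B_{N+1}}$, while on $B_N$ the form is only semipositive (it is rank one, being a multiple of $\frac{i}{2}\partial P\wedge\overline{\partial P}$); so a genuinely positive-definite correction is needed on the whole of $\overline{B_{N+1}}$, not merely on the annulus.

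First I would record the quantitative positivity of $\eta$ at the relevant scales. Combining Lemma \ref{eta} (which gives $\eta\to\omega_F$ with error decaying like $r^{-2/c}$) with the scaling bound \ref{low a}, one gets $\eta\ge a'|p|^{c-2}\omega_{euc}$ on $B_N^{c}$ for some $a'>0$, after enlarging $N$ and using continuity and compactness near the inner boundary. This is the key input, and the relevant feature is that the exponent $c-2$ is negative while $c>0$, so that this lower bound decays at infinity but only slowly. I would then seek $\phi$ with $\frac{i}{2}\partial\overline{\partial}\phi\ge (a+1)\omega_{euc}$ on $\overline{B_{N+1}}$ (which forces $\omega\ge\omega_{euc}>0$ there, since $\omega'\ge-a\,\omega_{euc}$), with $\frac{i}{2}\partial\overline{\partial}\phi$ supported in a large ball $B_M$, and with its negative part on $B_M\setminus B_{N+1}$ dominated by $\eta$'s lower bound, i.e. $\frac{i}{2}\partial\overline{\partial}\phi\ge -a'|p|^{c-2}\omega_{euc}$ there; then $\omega=\eta+\frac{i}{2}\partial\overline{\partial}\phi\ge 0$ on the transition region and $\omega=\eta$ on $B_M^{c}$. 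Finally I would verify, using the local description $P=fz_1$ with $f$ non-vanishing holomorphic and Lemma \ref{CSing}, that adding the smooth form $\frac{i}{2}\partial\overline{\partial}\phi$ to $\frac{i}{2}\partial\overline{\partial}|P|^{2\beta}$ preserves cone angle $\beta$ along $C$ inside $B_N$ (the sum being of the model type (smth)$+\frac{i}{2}\partial\overline{\partial}(|f|^{2\beta}|z_1|^{2\beta})$), while on the annulus and outside the cone structure is inherited from $\eta$.

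The main obstacle is exactly the existence of such a $\phi$. Positivity forces a definite amount of positive complex Hessian on $\overline{B_{N+1}}$, and for $\frac{i}{2}\partial\overline{\partial}\phi$ to have compact support this positive ``mass'' must be balanced by a negative part further out, which must nevertheless remain above $-\eta$. A naive radial ansatz $\phi=\phi(|p|^{2})$ fails: reversing the radial derivative in order to bring $\phi$ back to a constant produces a radial Hessian eigenvalue $\phi'+|p|^{2}\phi''$ more negative than the bound $a'|p|^{c-2}$ allows over the available range, the admissible radial budget being finite. The resolution is to abandon the radial ansatz and to spread the negative part over a large ball $B_M$, exploiting the complex structure: restricting to complex lines, the integral of the complex Hessian along each line vanishes, and the negative budget allowed there by the bound $a'|p|^{c-2}\omega_{euc}$ grows like $\int^{M}\rho^{\,c-1}\,d\rho\sim M^{c}$, which tends to infinity precisely because $c>0$ and therefore eventually dominates the fixed positive mass coming from $\overline{B_{N+1}}$. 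Choosing $M$ large enough and building $\phi$ adapted to the complex lines rather than purely radially then yields the required correction, and setting $\omega=\omega'+\frac{i}{2}\partial\overline{\partial}\phi$ completes the construction.
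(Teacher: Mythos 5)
Your overall skeleton is the paper's: keep $\omega'$, add a correction $i\partial\overline{\partial}\phi$ whose Hessian has compact support, make it strictly positive on $\overline{B_{N+1}}$ (where $\omega'\geq -a\,\omega_{euc}$), and absorb its negative part in the transition region using the lower bound $\eta\geq C_1|x|^{c-2}\omega_{euc}$, which is useful precisely because $c>0$. The gap is at the one step that carries all the content: you never construct $\phi$. The budget computation along complex lines only verifies a necessary condition (the negative mass available, $\sim\int^{M}\rho^{c-1}\,d\rho\sim M^{c}$, eventually exceeds the fixed positive mass required); it does not produce a function, and ``building $\phi$ adapted to the complex lines rather than purely radially'' is left entirely unspecified. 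Worse, the reason you give for needing this detour is an error: the radial ansatz does not fail. For $\phi=\psi(s)$, $s=|p|^{2}$, the radial eigenvalue of $i\partial\overline{\partial}\phi$ is $\psi'+s\psi''=(s\psi')'$, so the quantity that must be brought from roughly $(a+1)(N+1)^{2}$ down to $0$ is $s\psi'$, and the constraint $(s\psi')'\geq -a's^{(c-2)/2}$ permits a total decrease of $a'\int_{(N+1)^{2}}^{M^{2}}s^{(c-2)/2}\,ds=\frac{2a'}{c}\bigl(M^{c}-(N+1)^{c}\bigr)$, which diverges as $M\to\infty$ --- exactly the same budget you compute for complex lines. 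Your ``finite radial budget'' comes from treating the constraint as a bound on $\psi''$ alone, forgetting the $\psi'/s$ term; equivalently, it ignores that $\log|p|^{2}$ is plurisubharmonic with radial eigenvalue identically zero, so descending logarithmically costs nothing until the final flattening, whose cost can be spread over a large annulus.

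This is not an academic point: the paper's proof is exactly the radial construction you discarded. It takes $\phi=\log(1+|z|^{2}+|w|^{2})$ (the Fubini--Study potential, with $i\partial\overline{\partial}\phi>0$ everywhere), picks $K$ with $Ki\partial\overline{\partial}\phi+\omega'>0$ (possible since $\omega'\geq -a\,\omega_{euc}$ only on a compact set and $\omega'=\eta>0$ outside), and sets $\omega_{L}=\omega'+Ki\partial\overline{\partial}(\chi_{L}\phi)$ with $\chi_{L}$ a radial cutoff at scale $L$. On $B_{L}$ this equals $\omega'+Ki\partial\overline{\partial}\phi>0$; outside $B_{2L}$ it equals $\omega'=\eta$; and on $B_{2L}\setminus B_{L}$ the cutoff error satisfies $|i\partial\overline{\partial}(\chi_{L}\phi)|_{euc}\leq C_{2}|x|^{-2}\log|x|$ with $C_{2}$ independent of $L$, which is dominated by the lower bound $C_{1}|x|^{c-2}$ on $\eta$ once $L$ is large, since $c>0$. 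So to repair your proof you need only exhibit such a $\phi$ (the cutoff Fubini--Study potential, or the radial profile sketched above); with that in hand, your remaining points --- the quantitative lower bound on $\eta$ from Lemma \ref{eta} and \ref{low a}, strict rather than weak positivity on the transition annulus (use $a'/2$, say), and the cone-angle verification via Lemma \ref{CSing} --- go through essentially as in the paper.
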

	
	\begin{proof}
		Let $\chi = \chi (t)$ be a smooth cut-off function with $\chi(t) = 1$ for $t\leq1$ and $\chi(t)=0$ for $t\geq2$. For $L>0$ and $x \in \mathbb{C}^2$ let $\chi_L (x) = \chi (L^{-1} |x|)$. Set $\phi = \log (1+ |z|^2 + |w|^2)$ and define 
		\begin{equation}
		\omega_L = \omega' + i \Gamma \partial \overline{\partial} (\chi_L \phi)
		\end{equation}
		with  $\Gamma>0$ such that $\Gamma i\partial \overline{\partial} \phi + \omega' >0$ and $L>N+2$.   If $L$ is big enough we can assume that on the annulus on $ B_{2L} \setminus B_L$, $\omega' = \eta$. Recall that $|\eta|_{euc} \geq C_1 |x|^{c-2}$ on the other hand, on   $ B_{2L} \setminus B_L$ we can bound $ | \partial \overline{\partial} (\chi_L \phi)|_{euc} \leq C_2  |x|^{-2} \log |x|$ (with $C_2$ independent of $L$). Taking $L$ large  we get that $\omega_L$ is positive everywhere. Fix such a large $L$ and define $\omega = \omega_L$. The statement about the cone singularities follows from Lemma \ref{CSing} in Section \ref{la}.
		
	\end{proof}

	We look at the volume form of $\omega$ and define $f$ by means of the equation
	\begin{equation} \label{volom}
	\omega^2 = e^f |P|^{2\beta-2} \Omega \wedge \overline{\Omega} .
	\end{equation}
	
	\begin{lemma} \label{V}
		Outside a compact set $f$ is a smooth function with 
		\begin{equation} \label{ricpot}
		|D^{\alpha} f(x) | \leq A_{|\alpha|} |x|^{-1 - |\alpha|} .
		\end{equation}
	\end{lemma}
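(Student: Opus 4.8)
The plan is to pull the defining equation \ref{volom} back by $G=H^{-1}$ and estimate the three resulting terms separately. Outside a compact set $\omega=\eta$, so \ref{volom} reads $\eta^2=e^f\,|P|^{2\beta-2}\,\Omega\wedge\overline{\Omega}$. Writing $G^{*}(\Omega\wedge\overline{\Omega})=J_G\,\Omega\wedge\overline{\Omega}$ for the (positive) real Jacobian $J_G$ of $G$, pulling back and dividing by $\omega_F^2=|P_d|^{2\beta-2}\,\Omega\wedge\overline{\Omega}$ gives, away from $L$,
\begin{equation}
f\circ G=\log\frac{(G^{*}\eta)^2}{\omega_F^2}+(2\beta-2)\log\frac{|P_d|}{|P\circ G|}-\log J_G .
\end{equation}
Since $H\in\mathcal{D}$ is asymptotic to the identity we have $|G(y)|\sim|y|$, and the derivative bounds defining $\mathcal{D}$ let one pass, via $f=(f\circ G)\circ H$ and the chain rule, from a bound $|D^{\alpha}(f\circ G)(y)|\leq A_{|\alpha|}|y|^{-1-|\alpha|}$ to the asserted $|D^{\alpha}f(x)|\leq A_{|\alpha|}|x|^{-1-|\alpha|}$. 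So it is enough to bound each of the three terms above by $A_{|\alpha|}|y|^{-1-|\alpha|}$.

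The first term is controlled by Lemma \ref{eta} and the remark following it: $G^{*}\eta=\omega_F+E$, with $G^{*}\eta=\omega_F$ identically near $C$ at infinity and $|\nabla^{i}E|_{g_F}=O(r^{-2/c-i})$ elsewhere. In the coframe \ref{orthbasis} one has $\omega_F=\tfrac{i}{2}(\tau_1\overline{\tau}_1+\tau_2\overline{\tau}_2)$ and $G^{*}\eta=\tfrac{i}{2}\sum(\delta_{a\bar b}+\epsilon_{a\bar b})\tau_a\overline{\tau}_b$ with $|\epsilon|=O(r^{-2/c})$, so $(G^{*}\eta)^2/\omega_F^2=\det(I+\epsilon)=1+O(r^{-2/c})$; since $r^{-2/c}\sim|y|^{-1}$ by \ref{r}, the first term is $O(|y|^{-1})$. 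The derivative bounds follow by converting the covariant decay into Euclidean coordinate decay: using the dilations $D_\lambda=m_{\lambda^{2/c}}$ with $D_\lambda^{*}\tau_a=\lambda\tau_a$ and $r\circ D_\lambda=\lambda r$, together with the comparabilities \ref{r}, \ref{compmet}, \ref{deriv}, one rescales to the unit sphere, where the quantities are uniformly bounded, and reads off the weight. The third term is immediate: $DG=I+O(|y|^{-1})$ with $D^{j}G=O(|y|^{-j})$ from the definition of $\mathcal{D}$, hence $J_G=1+O(|y|^{-1})$ and $\log J_G$ obeys the required bounds.

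The heart of the matter is the second term, where the choice of $H$ pays off. Writing $x=G(y)$ it equals $(2\beta-2)\log|(P_d\circ H)/P|(x)$. In the conical neighborhood of $C$ at infinity where $H$ is holomorphic one has, near $L_d$ (the other lines being analogous after an orthogonal change of coordinates), $P_d=w\prod_{j<d}(z-a_jw)$ and $H(z,w)=(z,w-\Phi)$, so
\begin{equation}
P_d\circ H=(w-\Phi)\prod_{j<d}(l_j+a_j\Phi),
\end{equation}
while \ref{FAC} gives $P=(w-\Phi)\prod_{j<d}(l_j+\phi_j)$; the factor $w-\Phi$, which cuts out $C$, cancels, leaving
\begin{equation}
\frac{P_d\circ H}{P}=\prod_{j<d}\frac{l_j+a_j\Phi}{l_j+\phi_j}=1+O(|x|^{-1}),
\end{equation}
since $|l_j|\gtrsim|x|$ there while $\Phi,\phi_j$ are bounded. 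This cancellation is exactly what makes $f$ extend smoothly across $C$, and it yields the $O(|x|^{-1})$ decay; the derivative bounds come from Cauchy estimates (a bounded holomorphic function of $z$ on $\{|z|>R\}$ has $k$-th derivative $O(|x|^{-k})$) together with the $D^{\alpha}H$ bounds. Away from $C$ and $L$ both $P_d\circ H$ and $P$ are comparable to $|x|^{d}$ and differ by $O(|x|^{d-1})$ (because $H$ moves points an $O(1)$ amount and $P-P_d=-Q$ has degree $d-1$), so the ratio is again $1+O(|x|^{-1})$ with decaying derivatives. Translating $|x|\sim|y|$ finishes the estimate.

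I expect the main obstacle to be bookkeeping rather than conceptual: one must carefully convert the covariant-derivative decay of $G^{*}\eta-\omega_F$ into Euclidean coordinate decay and propagate all the $x$-versus-$y$ passages through the non-holomorphic part of $H$ in the transition region, both of which are handled by the conical scaling $D_\lambda$ and the comparabilities \ref{r}, \ref{compmet}, \ref{deriv}. The genuinely new input, by contrast, is the cancellation of the defining factor of $C$ in the second term, which is the whole reason for choosing $H\in\mathcal{D}$ to straighten $C$ onto $L$.
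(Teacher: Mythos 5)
Your proof is correct and is essentially the paper's proof in a different packaging: the paper argues region by region (where $H=\mathrm{id}$, where $H$ is holomorphic, and the transition annulus), and in each region its formula for $e^{f}$ is exactly your three-term identity with the terms that vanish there dropped — in particular your cancellation of the factor $w-\Phi$ via \ref{FAC} is precisely the paper's computation $P/(l\circ H)=\prod_j(1+\psi_j)$, and your handling of the transition region via Lemma \ref{eta} and the remark following it matches the paper's $\eta=H^{*}\omega_F+\xi$ with $|\xi|_{g_F}=O(|x|^{-1})$. The only difference is organizational (one global pulled-back identity with an explicit Jacobian term, versus letting the identity simplify in each region), so no further comparison is needed.
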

	
	\begin{proof}
		Consider first the complement of $U_{\delta, R}$ where $H$ is the identity and $\eta = \omega_F$, therefore
		$$ e^f = |P|^{2-2\beta} |P_d|^{2\beta-2} = \left| 1 + \frac{Q}{P_d} \right|^{2-2\beta} . $$
		In the complement of $U_{\delta, R}$  we have constants $b_{|\alpha|}$ such that
		$$ |D^{\alpha}(Q/P_d)|(x) \leq b_{|\alpha|} |x|^{-1-|\alpha|} .$$ 
		\ref{ricpot} then follows from $ f = (2-2\beta) \log |1+ Q/P_d|$.
		Secondly we consider the region $U_{\delta/2, 2R}$, where $H$ is holomorphic and $ \eta = H^{*} \omega_F$. We see that $e^f = |P/(P_d \circ H)|^{2-2\beta}$. We focus in  $U_{d, \delta/2, 2R}$ and use \ref{FAC} to get $ P/(P_d \circ H) = (1 + \psi_1 (z)) \ldots (1 + \psi_{d-1} (z))$ where $\psi_j (z)$ are holomorphic with $|\psi_j(z)| \leq A |z|^{-1}$ for some $A>0$. Note that in $U_{d, \delta/2, 2R}$ we have $|z| \geq a |(z, w)|$ for some $a>0$. As before we get \ref{ricpot}.
		Finally consider the region $U_{\delta, R} \setminus U_{\delta/2, 2R}$. By Lemma \ref{eta} we can write $ \eta = H^{*} \omega_F + \xi$ where $ \xi$ is a 2-form with $|\xi|_{g_F} = O(|x|^{-1})$. We conclude that $\eta^2 = \left( 1 + O(|x|^{-1}) \right) H^{*} \omega_F^2$ and we can proceed as before.
		
	\end{proof}
	
	We call $\omega$  our reference metric. We shall need a metric with bisectional curvature bounded from above. The author was not able to prove that $\omega$ has this property. To remedy this we introduce another metric, $\omega_B$, that we define next.

	\subsection{Upper bound on Bisec($\omega_{B})$} \label{upbound}
	
	Fix $0<\delta < 2c$. Note that the function $ p \to|p|^\delta$ is plurisubharmonic in $\mathbb{C}^2$. In fact, 
	$$ a^{-1} |p|^{\delta -2} \omega_{euc} \leq i \partial \overline{\partial} |p|^{\delta} \leq  a |p|^{\delta -2} \omega_{euc} $$
	for some $a>0$. The diffeomorphism $H$ is asymptotic to the identity, so  there is  $K>0$ such that $a^{-1} |p|^{\delta -2} \omega_{euc} \leq i \partial \overline{\partial} |H|^{\delta} \leq  a |p|^{\delta -2} \omega_{euc}$  outside a ball of radius $K$. In the construction of $\omega$ (Lemma \ref{AM}) we take $L>>K$. Let $\psi = \psi(t)$ be a smooth convex function of one real variable which is equal to the identity for large values of $t$ and is constant when $t \leq K$.  Define $ h = \psi \circ |H|^{\delta}$, then $h$ is smooth and  $\nu= i\partial \overline{\partial} h = \psi'' i \partial |H|^{\delta} \wedge \overline{\partial} |H|^{\delta} + \psi' i\partial \overline{\partial} |H|^{\delta} $, since the first term is non-negative we have that $\nu \geq 0$ in all of $\mathbb{C}^2$. Moreover,   outside a compact set there is $a>1$ such that
	$$a^{-1} |p|^{\delta-2} \omega_{euc} \leq \nu (p) \leq a |p|^{\delta-2} \omega_{euc} .$$
	We define $\omega_{B}$ as
	\begin{equation} \label{ref}
	\omega_{B} = \omega +  \Lambda\nu , 
	\end{equation}
	where $\Lambda>0$ will be specified later on. From its definition it follows that 
	\begin{equation}
	Q_2^{-1} \omega \leq \omega_{B} \leq Q_2 \omega
	\end{equation}
	for some $Q_2>0$.
	The goal is to prove the following 
	
	\begin{lemma} \label{UBD}
		$$ Bisec (\omega_{B}) \leq Q_1 .$$
	\end{lemma}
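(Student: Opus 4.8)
The plan is to follow Appendix A of \cite{JMR}, reducing the global estimate to a uniform local computation along $C$. The first step is localization. Away from $C$ the metric $\omega_{ref}$ is smooth, so its bisectional curvature is continuous and therefore bounded on every compact piece of $\mathbb{C}^2\setminus C$. Along the end the behaviour is controlled: by Lemma \ref{eta} and the remark following it, $\omega=\eta$ is asymptotic to the flat cone $\omega_F$ with $|\nabla^i(G^{*}\eta-\omega_F)|_{g_F}=O(r^{-2/c-i})$, and $g_F$ is flat wherever it is smooth. Moreover the correction $\Lambda\nu$ is subordinate to $\omega_F$ at infinity: since $0<\delta<c$ we have $h\sim|p|^{\delta}$ growing strictly slower than $r^2\sim|p|^{c}$, so $\nu\sim|p|^{\delta-2}\omega_{euc}$ is negligible compared with $\omega_F\sim|p|^{c-2}\omega_{euc}$. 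Hence the curvature of $\omega_{ref}$ decays along the part of the end disjoint from $C$, and it only remains to bound $\mathrm{Bisec}(\omega_{ref})$ on a fixed-size neighbourhood of $C$, uniformly as one approaches $C$ and uniformly in the base point (including out along the curve).

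Near a point of $C$ I would use the adapted holomorphic coordinates $(z_1,z_2)$ of Lemma \ref{CSing}, in which $C=\{z_1=0\}$ and
$$\omega_{ref}=\Theta+i\partial\overline{\partial}\bigl(F|z_1|^{2\beta}\bigr),$$
with $\Theta$ a smooth $(1,1)$-form and $F>0$ smooth. Using $\partial_1\partial_{\bar 1}|z_1|^{2\beta}=\beta^2|z_1|^{2\beta-2}$ and $\partial_1|z_1|^{2\beta}=\beta|z_1|^{2\beta-2}\bar z_1$, one finds $g_{1\bar 1}\sim\beta^2F|z_1|^{2\beta-2}$, $g_{1\bar 2}=O(|z_1|^{2\beta-1})$ and $g_{2\bar 2}$ bounded and positive, so the most singular part of $\omega_{ref}$ is exactly that of the flat product cone $g_{(\beta)}$. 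I would then compute
$$R_{i\bar\jmath k\bar l}=-\partial_i\partial_{\bar\jmath}g_{k\bar l}+g^{p\bar q}(\partial_i g_{k\bar q})(\partial_{\bar\jmath}g_{p\bar l}),$$
tracking the powers of $|z_1|$. The decisive structural point is that the two most singular contributions to each component, both of size $|z_1|^{2\beta-4}$ (one from $\partial\overline{\partial}g$, one from $g^{-1}\partial g\,\overline{\partial}g$), reproduce those of the flat cone with $F$ and $\Theta$ frozen at the base point, where the total curvature vanishes identically; hence they cancel, leaving only terms of strictly lower singularity order.

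To convert these pointwise computations into a uniform bound I would exploit the self-similarity of the model. The cone $g_{(\beta)}$ is homothetic under $(z_1,z_2)\mapsto(\lambda z_1,\lambda^{\beta}z_2)$, and along the radial direction $g_F$ rescales under the dilations $D_\lambda$; since the normalized bisectional curvature is invariant under homothety, a bound on the shrinking region near $C$ reduces to a bound on a fixed annulus $\{1/2\le|z_1|\le 1\}$. On that annulus the rescaled metrics form a precompact family, uniformly comparable to smooth metrics and converging to the frozen flat model; continuity and compactness then upgrade the pointwise control to a uniform one. Finally I would normalize, seeking $R_{i\bar\jmath k\bar l}v^i\bar v^{\jmath}w^k\bar w^l\le B\,(g_{i\bar\jmath}v^i\bar v^{\jmath})(g_{k\bar l}w^k\bar w^l)$ for all $(1,0)$-vectors $v,w$; here the positive term $\Lambda\nu$ plays its role, since taking $\Lambda$ large simultaneously guarantees $\omega_{ref}>0$ and forces the surviving borderline terms to be dominated from above.

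The main obstacle is exactly this last sign analysis. After the leading $|z_1|^{2\beta-4}$ terms cancel, the normalized curvature still contains terms of borderline size whose sign is not a priori definite; because the lemma asserts only an \emph{upper} bound, it is enough to show that the genuinely singular bisectional curvatures are controlled from above (they may, and typically do, tend to $-\infty$ along the cone direction, which is harmless) while ruling out any positive blow-up in the remaining mixed components. Establishing this — keeping track that every dangerous contribution is either of the correct sign or absorbed by the convexifying term $\Lambda\nu$ — is the crux, and is precisely the computation of Appendix A of \cite{JMR} adapted to the present potentials $F|z_1|^{2\beta}+(\text{smooth})$.
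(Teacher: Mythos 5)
Your overall skeleton matches the paper's: localize away from $C$, reduce the end to a scaled model, and run the Jeffres--Mazzeo--Rubinstein computation near the curve, with the leading $|z_1|^{2\beta-4}$ contributions cancelling against each other. The genuine gap is the step you use to get uniformity, namely that ``the rescaled metrics form a precompact family\dots continuity and compactness then upgrade the pointwise control to a uniform one.'' This cannot work, for a structural reason you yourself record: the bound is only one-sided, and $\mathrm{Bisec}$ genuinely tends to $-\infty$ near $C$. A soft compactness-plus-continuity argument is sign-blind and would output a two-sided bound $|\mathrm{Bisec}(\omega_{ref})|\leq C$, which is false. Quantitatively the failure is a rate mismatch: under your transverse rescaling $(z_1,z_2)\mapsto(\lambda^{-1}z_1,\lambda^{-\beta}z_2)$, the normalized curvature of $\omega_{ref}$ at the original point equals $\lambda^{2\beta}$ times that of the rescaled metric on the fixed annulus, while the rescaled metrics converge to the frozen flat model only at a rate strictly slower than $\lambda^{-2\beta}$ (the $z_2$-dependence of $F$ contributes errors of size $\lambda^{-\beta}$, and $\lambda^{2\beta}\psi_\lambda^{*}\Theta$ retains a $dz_1 d\bar z_1$ component of size $\lambda^{2\beta-2}$). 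So compactness only yields $\mathrm{Bisec}(\omega_{ref})=O(|z_1|^{-\beta})$, i.e.\ no bound at all. Note also that $\omega_{ref}$ itself admits no transverse homothety; only the model $g_{(\beta)}$ does, and approximate self-similarity is exactly what the factor $\lambda^{2\beta}$ amplifies away.

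What replaces this in the paper is a quantitative lemma, not a compactness argument: if $\omega=\omega_{(\beta)}+\epsilon\nu$ on the unit ball with $Q^{-1}\omega_{euc}\leq\nu\leq Q\omega_{euc}$ and $|\nu_{i\bar j,k}|,|\nu_{i\bar j,k\bar l}|\leq Q$, then $\mathrm{Bisec}(\omega)\leq C$ on $B_{1/2}$ with $C=C(Q)$ \emph{independent of} $\epsilon\in(0,1)$. This $\epsilon$-uniformity is the whole point at infinity: the paper rescales by the cone dilations $m_\lambda$ centered at the apex, where $\omega_F$ is exactly self-similar, and the smooth part then appears as $\lambda^{\delta-c}\nu$ with coefficient tending to $0$; citing \cite{JMR} as a black box, as you do, only gives a constant depending on a fixed metric on a fixed ball, not uniform in this degenerating parameter. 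The proof of the uniform lemma is also where the sign structure is used exactly rather than approximately: the perturbation makes $g^{1\bar 1}\leq(1+\delta)^{-1}|z_1|^{2-2\beta}$ with $\delta=Q^{-1}\epsilon|z_1|^{2-2\beta}$, so the singular positive term $\|E\|^2$ coming from the Christoffel part falls short of the negative singular term in $-g_{1\bar 1,1\bar 1}$ by the factor $(1+\delta)^{-1}$, and the cross terms are absorbed via $\|D+E\|^2\leq(1+\delta^{-1})\|D\|^2+(1+\delta)\|E\|^2$ together with the bound $\delta^{-1}\|D\|^2\lesssim\epsilon$. Deferring ``the crux'' to \cite{JMR} therefore leaves unproved both the absorption mechanism and, more importantly, the uniformity that your compactness route cannot supply.
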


	We recall the definition of bisectional curvature. Let $\omega$ be a K\"ahler metric on an open subset $U$ of $\mathbb{C}^2$. For $x \in U$ and  $v, w \in T_x^{1,0} \mathbb{C}^2$ with $ |v|_{\omega}= |w|_{\omega} =1$ we set
	$$ \mbox{Bisec}_{\omega} (v, w) = R (v, \overline{v}, w, \overline{w}) ,  $$
	where $R$ is the Riemann curvature tensor of $\omega$. Recall that if $(z_1, z_2)$ are holomorphic coordinates around $x$ in which $\omega = \sum_{i, j =1}^2 g_{i\overline{j}} idz_i d\overline{z_j}$  and $v= v_1 \partial / \partial z_1 +  v_2 \partial / \partial z_1$, $ w=  w_1 \partial / \partial z_1 +  w_2 \partial / \partial z_2$ then 
	$$ \mbox{Bisec}_{\omega} (v, w) = \sum_{i, j, k, l =1}^2 R_{i\overline{j}k\overline{l}} v_i \overline{v_j} w_k \overline{w_l} , $$
	where
	$$ R_{i\overline{j}k\overline{l}} = -g_{i\overline{j}, k\overline{l}} + \sum_{s, t =1}^2 g^{s\overline{t}} g_{i\overline{t}, k} g_{s\overline{j}, \overline{l}} . $$
	Indexes after the comma indicate differentiation and $(g^{i\overline{j}})$ denotes the inverse transpose of the positive Hermitian matrix $(g_{i\overline{j}})$, the index $i$ being for the rows and $j$ for the columns.
	
	In Appendix A of \cite{JMR} it is shown that if $\eta$ is a smooth K\"ahler form in the unit ball $B_1 \subset \mathbb{C}^2$, say, and $F$ is a smooth positive function such that
	\begin{equation} \label{loc rep}
	\omega= \eta + i \partial \overline{\partial} (F |z_1|^{2\beta})
	\end{equation}
	is positive on $B_1 \setminus \lbrace z_1 =0 \rbrace$; then there is $C>0$ such that $\mbox{Bisec}(\omega) \leq C$ on $B_{1/2} \setminus \lbrace z_1 =0 \rbrace$, say. As noticed in \cite{Rubinstein} this fact can also be derived by considering the pull-back of the smooth K\"ahler metric $\eta + i \partial \overline{\partial} (F|z_{n+1}|^2)$ in \(\mathbb{C}^{n+1}\) by the map \((z_1, \ldots, z_n) \to (z_1, \ldots, z_n, z_n^{\beta} ) \) and appealing to the well-known fact that the holomorphic sectional curvature of a K\"ahler sub-manifold is bounded above by that of the ambient space -see \cite{GriffithsHarris}-. At points $x \in C$ where $\chi_L (x) = 0$ the metric $\omega$ in Lemma \ref{AM} can't be written in the form \ref{loc rep}.
	
	We choose $\Lambda >0$ in \ref{ref} such that  $\omega_{B}$ can be written in the form \ref{loc rep} around the points of the curve, so we have an upper bound on $ \mbox{Bisec} (\omega_{B})$ on compacts sets. In order to extend this bound to $\mathbb{C}^2$ we use the `asymptotically conical' behavior of $\omega_{B}$.
	To prove  Lemma \ref{UBD}  it suffices to bound from above $\mbox{Bisec}(\omega_F + G^{*} \nu)$ in a region $U_{\delta_0, R_0}$ for some $\delta_0, R_0 >0$. Note that outside a compact set $G^{*} \nu = i\partial\overline{\partial}|p|^{\delta}$. 
	Let $0 \not= q \in L$ and $B$ a neighborhood of $q$ where there exist coordinates $(\xi_1, \xi_2)$ which  map $B$ to the unit ball in $\mathbb{C}^2$ in which $ \omega_F = |\xi_1|^{2\beta-2} i d\xi_1 d\overline{\xi}_1 +  i d\xi_2 d\overline{\xi}_2$. We might also assume that $ |q| \geq 2$ and that $B$ is contained in the euclidean ball of radius half the euclidean distance from $q$ to $0$.
	Let $m_{\lambda} : B \to \lambda B$ for $\lambda \geq 1$ be the multiplication by $\lambda$ in $\mathbb{C}^2$. We simplify notation and write $\nu$ for $G^{*}\nu$.  Then 
	$$m_{\lambda}^{*} (\omega_F + \nu )= \lambda^{2c} (\omega_F + \lambda^{\delta-2c}  \nu) . $$
	We will show that we have an upper bound for the bisectional curvature of  $\omega_F + \lambda^{-2c} m_{\lambda}^{*} \nu$ on $B_{1/2}$ which is independent of $\lambda \geq 1$. By a covering argument this gives the desired bound on $U_{\delta_0, R_0}$ and hence proves Lemma \ref{UBD}.
	
	Write  $ \nu_{i\overline{j}} = \nu (\frac{\partial}{\partial \xi_i}, \frac{\partial}{\partial \overline{\xi_j}})$. Let  $Q>0$ be such that 
	\begin{equation} \label{CB1}
	Q^{-1}(\delta_{ij}) \leq (\nu_{i\overline{j}} ) \leq Q (\delta_{ij}) ; \hspace{3mm}  | \nu_{i\overline{j}, k} | \leq Q; \hspace{3mm} | \nu_{i\overline{j}, k \overline{l}}| \leq Q 
	\end{equation}
	on $B_{1/2}$.
	Write  $\omega = \omega_{(\beta)} + \epsilon  \nu$  with   $\nu$ a smooth K\"ahler form in the unit ball in $\mathbb{C}^2$ ,  $0< \epsilon < 1$,
	
	$$ \omega_{(\beta)} = |\xi_1|^{2\beta-2} i d\xi_1 d\overline{\xi}_1 +  i d\xi_2 d\overline{\xi}_2$$ 
	and
	$$ \nu = \sum_{i,j =1}^2  \nu_{i\overline{j}} id\xi_i d\overline{\xi}_j . $$
	The desired bound then follows from the following
	
	\begin{lemma}
		There is a constant $C$, independent of $\epsilon >0$,  such that $\mbox{Bisec}(\omega) \leq C$ on $B_{1/2}$. In fact $C$ depends only on $Q$, where $Q>0$ is such  that,  on $B_{1/2}$ ,    $Q^{-1} \omega_{euc}  \leq \nu \leq Q \omega_{euc}$ and $|\nu_{i\overline{j}, k}|, |\nu_{i\overline{j}, k\overline{l}}|  \leq Q$ for any $i, j, k ,l$.
	\end{lemma}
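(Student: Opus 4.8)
The plan is to prove this by a direct computation of the curvature tensor in the coordinates $(\xi_1,\xi_2)$, following the scheme of Appendix A of \cite{JMR} but tracking the dependence on $\epsilon$ and isolating the cancellations that already make the model $\omega_{(\beta)}$ flat. Throughout write $\sigma=|\xi_1|$, $h_{1\bar1}=\sigma^{2\beta-2}$, $h_{2\bar2}=1$, $h_{1\bar2}=0$, so that $g_{i\bar j}=h_{i\bar j}+\epsilon\nu_{i\bar j}$; it suffices to bound $\mathrm{Bisec}_\omega(v,w)=\sum R_{i\bar jk\bar l}v_i\bar v_jw_k\bar w_l$ from above for $\omega$-unit $(1,0)$-vectors $v,w$ at every point of $B_{1/2}\setminus\{\xi_1=0\}$ by a constant depending only on $Q$ and $\beta$.

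First I would record three a priori estimates. (i) From $\|v\|_\omega=1$ and Cauchy--Schwarz for $\langle\cdot,\cdot\rangle_g$ one gets $|v_1|^2\le g^{1\bar1}$ and $|w_1|^2\le g^{1\bar1}$; this is the mechanism that makes the singular direction small. (ii) Since $h_{1\bar1}=\sigma^{2\beta-2}\to\infty$ dominates as $\sigma\to0$, computing $\det g$ gives $g^{1\bar1}=\Theta\,\sigma^{2-2\beta}$ with $\Theta=\Theta(\xi,\epsilon)$ satisfying $0<\Theta\le1$ and $1-\Theta\le C(Q)\,\epsilon\,\sigma^{2-2\beta}$, while $g^{2\bar2}\le C(Q)$ and $|g^{1\bar2}|\le C(Q)\,\epsilon\,\sigma^{2-2\beta}$; the sign $\Theta\le1$, forced by $\nu>0$, will be decisive. (iii) Among the derivatives $g_{i\bar j,k}$ and $g_{i\bar j,k\bar l}$ the only unbounded ones are $h_{1\bar1,1}=(\beta-1)\xi_1^{\beta-2}\bar\xi_1^{\beta-1}$ (of size $\sigma^{2\beta-3}$) and $h_{1\bar1,1\bar1}=(\beta-1)^2\sigma^{2\beta-4}$; every other derivative is an $\epsilon\nu$-derivative, hence $O(\epsilon Q)$.

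With this in hand I would expand $R_{i\bar jk\bar l}=-g_{i\bar j,k\bar l}+g^{s\bar t}g_{i\bar t,k}g_{s\bar j,\bar l}$ and write $\mathrm{Bisec}_\omega(v,w)=-\sum g_{i\bar j,k\bar l}v_i\bar v_jw_k\bar w_l+\sum_{s,t}g^{s\bar t}C_t\overline{C_s}$, with $C_t=\sum_{i,k}g_{i\bar t,k}v_iw_k$; the second sum is $\ge0$ and is the only enemy of an upper bound. By (iii), $C_1=D+O(\epsilon Q)$ with $D:=h_{1\bar1,1}v_1w_1$, while $C_2=O(\epsilon Q)$ (all derivatives of $g_{i\bar2}$ are $\epsilon\nu$-derivatives), and the only singular term in the first sum is $-g_{1\bar1,1\bar1}|v_1|^2|w_1|^2$. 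Combining the two genuinely singular contributions using (ii),
$$-g_{1\bar1,1\bar1}|v_1|^2|w_1|^2+g^{1\bar1}|D|^2=(\beta-1)^2(\Theta-1)\,\sigma^{2\beta-4}|v_1|^2|w_1|^2\le0,$$
so this part is harmless for an upper bound. Everything else is either manifestly $O(\epsilon Q)$ (such as $g^{2\bar2}|C_2|^2$, $g^{1\bar1}|C_1-D|^2$, and the bounded second derivatives) or a cross term pairing $D$ against the $O(\epsilon)$ remainders, contracted with $g^{1\bar1}$ and $g^{1\bar2}$.

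The hard part --- the only place where the estimate is not routine --- is this cross term $T$. Using $|v_1w_1|\le g^{1\bar1}\sim\sigma^{2-2\beta}$ it is of size $\epsilon\,\sigma^{1-2\beta}$, which for $\beta>1/2$ blows up as $\sigma\to0$ and has indefinite sign; estimated in isolation it would defeat the bound. The resolution is that $T$ must be completed into a square against the negative term above. Parametrising the normalisation by $|v_1|^2\approx p\,g^{1\bar1}$, $|v_2|^2\approx1-p$ (and $q$ for $w$), the constraint $\|v\|_\omega=1$ forces $v_2$ to be small exactly when $v_1$ is large; setting $X:=\sqrt{pq}\,\sigma^{1-2\beta}$ one finds the dangerous part of $\mathrm{Bisec}_\omega(v,w)$ has the form $-c_1\epsilon X^2+c_2\epsilon X+O(\epsilon Q)$ with $c_1,c_2$ depending only on $Q,\beta$ and $c_1>0$, whence it is $\le\epsilon c_2^2/(4c_1)+O(\epsilon Q)\le C(Q)$. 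This is where the one-sided nature of the conclusion is essential: the sectional curvatures in the cone direction do tend to $-\infty$, and it is precisely the negativity of the leading singular term that absorbs the cross term. Since every dangerous contribution carries a factor $\epsilon\in(0,1)$, the final constant $C$ depends only on $Q$ and $\beta$ and not on $\epsilon$, which is exactly the uniformity needed for the scaling and covering argument reducing Lemma \ref{UBD} to this statement.
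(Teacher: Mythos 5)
Your outline follows the same route as the paper's proof: both are direct adaptations of Appendix A of \cite{JMR}, both isolate the same two genuinely singular contributions (the term $-g_{1\bar 1,1\bar 1}|v_1|^2|w_1|^2$ from the second-derivative part and the term $g^{1\bar 1}\,|h_{1\bar 1,1}v_1w_1|^2$ from the Christoffel-squared part), and both absorb the indefinite cross term of size $\epsilon\sigma^{1-2\beta}$ into the negative excess of their sum via Young's inequality --- in the paper this appears as $\|D+E\|^2\le(1+\delta^{-1})\|D\|^2+(1+\delta)\|E\|^2$ with weight $\delta=Q^{-1}\epsilon|z_1|^{2-2\beta}$ (paper's notation; its singular tensor $E$ plays the role of your $D$), which is exactly your completion of the square in $X$. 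The only structural difference is that the paper first performs a pointwise quadratic holomorphic change of coordinates making all $\hat g_{i\bar 2,k}(x)$ vanish, so that the Christoffel-squared term reduces to a pure $g^{1\bar 1}$-contraction; you stay in the original coordinates and estimate the $C_2$-, $g^{1\bar 2}$- and $g^{2\bar 2}$-contractions directly, which is legitimate since they are $O(\epsilon)$, and it spares you re-estimating the transformed second derivatives.

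There is, however, one genuine gap, and it sits at the quantitative heart of the argument. In your step (ii) you record $0<\Theta\le 1$ together with the upper bound $1-\Theta\le C(Q)\,\epsilon\,\sigma^{2-2\beta}$, but these two facts do not imply the structure $-c_1\epsilon X^2+c_2\epsilon X+O(\epsilon Q)$ with $c_1>0$ asserted in your last paragraph: the coefficient of $-X^2$ in your dangerous part is, up to the factors $\Theta^2(\beta-1)^2$, the quantity $(1-\Theta)\sigma^{2\beta-2}$, so obtaining $c_1\ge c(Q,\beta)\,\epsilon$ requires the reverse, \emph{lower} bound $1-\Theta\ge c(Q)\,\epsilon\,\sigma^{2-2\beta}$, together with $\Theta\ge c(Q)>0$. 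With only $\Theta\le 1$ the negative term could be negligible compared with the cross term, and the square would not complete. Fortunately both missing bounds are true and come from the determinant computation you already performed: $\Theta=(1+\delta)^{-1}$ with $\delta=\sigma^{2-2\beta}\epsilon\left(\nu_{1\bar 1}+\epsilon\det\nu\right)/(1+\epsilon\nu_{2\bar 2})$, and $\nu_{1\bar 1}\ge Q^{-1}$, $\nu_{2\bar 2}\le Q$ give $\delta\ge c(Q)\,\epsilon\,\sigma^{2-2\beta}$, while $\sigma\le 1/2$ and $\epsilon<1$ give $\delta\le C(Q)$, hence $1-\Theta=\delta/(1+\delta)\ge c'(Q)\,\epsilon\,\sigma^{2-2\beta}$ and $\Theta\ge c'(Q)$. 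This lower bound is precisely the paper's key estimate $g^{1\bar 1}(x)\le(1+\delta)^{-1}|z_1|^{2-2\beta}$ with $\delta=Q^{-1}\epsilon|z_1|^{2-2\beta}$; once you state it in this direction, your argument closes.
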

	
	\begin{proof}
		This follows the lines of Appendix A in \cite{JMR}.
		Write
		$$ \omega = |\xi_1|^{2\beta -2} i d\xi_1 d\overline{\xi}_1 + \sum_{i, j =1}^2  \tilde{g}_{i\overline{j}} id\xi_i d\overline{\xi}_j , $$
		so that 
		
		$$\tilde{g}_{1\overline{1}} = \epsilon \nu_{1\overline{1}}, \hspace{3mm} \tilde{g}_{1\overline{2}} = \epsilon \nu_{1\overline{2}} \hspace{3mm} \tilde{g}_{2\overline{2}} = 1 + \epsilon \nu_{2\overline{2}} . $$
		Let $x=(x_1, x_2) \in B_{1/2} \setminus \lbrace \xi_1 =0 \rbrace$. Define new coordinates $(z_1, z_2)$ around $x$ via
		
		$$ \xi_1 = z_1$$
		$$ \xi_2 = z_2 + \frac{a}{2} (z_1 -x_1)^2 + b (z_1 - x_1)(z_2-x_2) + \frac{c}{2} (z_2 - x_2)^2 , $$
		where
		$$ a = - (\tilde{g}_{2\overline{2}} (x))^{-1} \tilde{g}_{1\overline{2}, 1} (x), \hspace{3mm} b = - (\tilde{g}_{2\overline{2}} (x))^{-1} \tilde{g}_{1\overline{2}, 2} (x), \hspace{3mm} c = - (\tilde{g}_{2\overline{2}} (x))^{-1} \tilde{g}_{2\overline{2}, 2} (x) .   $$
		In this new coordinates we have
		
		$$\omega = |z_1|^{2\beta-2} idz_1d\overline{z}_1 + \sum_{i,j} \hat{g}_{i\overline{j}} i dz_i d\overline{z}_j . $$
		
		\begin{claim}
			$ \hat{g}_{i\overline{j}, k} (x) = 0$ when $j \not= 1$. 
		\end{claim}
		
		Indeed, write $ d\xi_2 = A dz_1 + B dz_2$, with $A = a(z_1 -x_1) + b (z_2 -x_2)$ and $B= 1 + b(z_1 - x_1) + c(z_2 -x_2)$. A straightforward computation gives
		
		$$ \hat{g}_{1\overline{2}} = \tilde{g}_{1\overline{2}} \overline{B} + \tilde{g}_{2\overline{2}} A \overline{B}, \hspace{3mm} \hat{g}_{2\overline{2}} = |B|^2 \tilde{g}_{2 \overline{2}} . $$
		From here we get
		
		$$ \hat{g}_{1\overline{2}, 1}(x) = \tilde{g}_{1\overline{2}, 1} (x) + \tilde{g}_{2\overline{2}}(x) a, \hspace{3mm}  \hat{g}_{1\overline{2}, 2}(x) = \tilde{g}_{1\overline{2}, 2} (x) + \tilde{g}_{2\overline{2}}(x) b, \hspace{3mm} \hat{g}_{2\overline{2}, 2}(x) = \tilde{g}_{2\overline{2}, 2} (x) + \tilde{g}_{2\overline{2}}(x) c  . $$
		Our choice of $a, b, c$ implies that these three numbers are zero. The K\"ahler condition  $\hat{g}_{i\overline{j}, k} =  \hat{g}_{k\overline{j}, i}$ implies that $\hat{g}_{2\overline{2}, 1} (x) = \hat{g}_{1\overline{2}, 2} (x) = 0$ and the claim follows.
		
		We compute the bisectional curvature of $\omega$ at $x$ using the coordinates $(z_1, z_2)$.  Let  $v= v_1 \partial / \partial z_1 +  v_2 \partial / \partial z_1$ and $ w =  w_1 \partial / \partial z_1 +  w_2 \partial / \partial z_2   \in T_x^{1,0} \mathbb{C}^2$ with $ |v|_{\omega}= |w|_{\omega} =1$. Note that this implies that $|v_1|, |w_1| \leq C |z_1|^{1-\beta}$ and $|v_2|, |w_2| \leq C$.  Write $\omega = \sum_{i, j =1}^2 g_{i\overline{j}} idz_i d\overline{z_j}$ . So that  $g_{i\overline{j}}= \hat {g}_{i\overline{j}}$ when $(i, j) \not= (1,1)$ and $ g_{1\overline{1}}= |z_1|^{2\beta-2} + \hat {g}_{1\overline{1}}$. Write $ \mbox{Bisec}_{\omega} (v, w) = T_1 + T_2$, where
		
		$$ T_1 = - \sum_{i, j, k, l} g_{i\overline{j}, k\overline{l}} (x) v_i \overline{v}_j w_k \overline{w}_l$$
		and
		$$ T_2 =  \sum_{s, t, i , j , k, l =1}^2 g^{s\overline{t}}(x) g_{i\overline{t}, k}(x) g_{s\overline{j}, \overline{l}}(x)  v_i \overline{v}_j w_k \overline{w}_l . $$
		
		\begin{claim}
			$$ T_1 \leq C -(\beta-1)^2 |z_1|^{2\beta-4} |v_1|^2 |w_1|^2 . $$
		\end{claim}
		In fact $g_{1\overline{1}, 1\overline{1}} = (\beta-1)^2 |z_1|^{2\beta-4} + \hat{g}_{1\overline{1}, 1\overline{1}}$, and we have
		$$ \hat{g}_{1\overline{1}} = \tilde{g}_{1\overline{1}}  + A \tilde{g}_{2\overline{1}} + \overline{A} \tilde{g}_{1\overline{2}} + |A|^2 \tilde{g}_{2\overline{2}}   .    $$
		From here we compute
		$$ \hat{g}_{1\overline{1}, 1\overline{1}} (x) = \tilde{g}_{1\overline{1}, 1\overline{1}} (x) + a \tilde{g}_{2\overline{1}, \overline{1}} (x) + \overline{a} \tilde{g}_{1\overline{2}, 1} (x) + |a|^2 \tilde{g}_{2\overline{2}} . $$
		Since the differential at $x$ of the change of coordinates between $(\xi_1, \xi_2)$ and $(z_1, z_2)$ is the identity, we have that
		$$ \tilde{g}_{i\overline{j}, k} (x)= \frac{\partial \tilde{g}_{i\overline{j}}}{\partial \xi_k} (x)= \frac{\partial \nu_{i\overline{j}}}{\partial \xi_k} (x), \hspace{3mm}  \tilde{g}_{i\overline{j}, k\overline{l}} (x)= \frac{\partial^2 \tilde{g}_{i\overline{j}}}{\partial \overline{\xi}_l \partial \xi_k} (x)= \frac{\partial^2 \nu_{i\overline{j}}}{\partial \overline{\xi}_l\partial \xi_k} (x) . $$
		From this fact, and $|a| = |- (\tilde{g}_{2\overline{2}} (x))^{-1} \tilde{g}_{1\overline{2}, 1} (x)| \leq |\tilde{g}_{1\overline{2}, 1} (x)|$ we get that $|\hat{g}_{1\overline{1}, 1\overline{1}} (x)| \leq C$. Similarly, when $(i, j, k, l) \not= (1,1,1,1)$ we have $ |g_{i\overline{j}, k\overline{l}}(x)| = |\hat{g}_{i\overline{j}, k\overline{l}} (x)| \leq C$, and the claim follows.
		
		\begin{claim}
			$$ T_2 \leq C + (\beta-1)^2 |z_1|^{2\beta-4} |v_1|^2 |w_1|^2 . $$
		\end{claim}
		Define a non-negative bilinear Hermitian form on tensors $ a=[a_{i\overline{j}k}]$ satisfying $a_{i\overline{j}k}= a_{k\overline{j}i}$ by
		
		$$ \langle [ a_{i\overline{j}k}], [b_{p\overline{q}r}] \rangle = \sum g^{q\overline{j}}(x) (w_i a_{i\overline{j}k} v_k) \overline{(w_p b_{p\overline{q}r})v_r)} . $$
		Then
		
		$$T_2= \| D + E \|^2$$
		with $D_{ijk} = \hat{g}_{i\overline{j}, k}$ and $E_{ijk} = (\beta-1) |z_1|^{2\beta-4} \overline{z_1}$ if $(ijk) = (111)$ and $E_{ijk} = 0 $ otherwise. We first estimate
		
		$$ \| E \|^2 = (\beta-1)^2 |z_1|^{4\beta-6} g^{1\overline{1}}(x) |v_1|^2 |w_1|^2 , $$
		where $g^{1\overline{1}} = \det(g)^{-1} g_{2\overline{2}}$.
		
		$$ \det(g) = (|z_1|^{2\beta -2} + \hat{g}_{1\overline{1}}) \hat{g}_{2\overline{2}} - |\hat{g}_{1\overline{2}}|^2 = \hat{g}_{2\overline{2}} |z_1|^{2\beta-2} \left( 1 + (\hat{g}_{2\overline{2}})^{-1} \det(\hat{g}) |z_1|^{2-2\beta}\right) . $$
		Unwinding notation we have that at the point $x$, $ \hat{g}_{2\overline{2}} = 1 + \epsilon \nu_{2\overline{2}}(x)$ and $\det(\hat{g}) (x) = \epsilon \nu_{1\overline{1}}(x) + \epsilon^2 \det (\nu) (x)$. We conclude that $(\hat{g}_{2\overline{2}})^{-1} \det(\hat{g}) \geq Q^{-1} \epsilon $, so 
		
		$$ g^{1\overline{1}}(x) \leq ( 1 + \delta)^{-1} |z_1|^{2-2\beta}$$
		with $\delta = Q^{-1} \epsilon |z_1|^{2-2\beta}$. We get
		
		$$ \| E \|^2 \leq (1 + \delta)^{-1} (\beta-1)^2 |z_1|^{2\beta-4} |v_1|^2 |w_1|^2 . $$
		Next we do a trick
		
		$$ \| T_2 \|^2 \leq ( 1 + \delta^{-1} ) \| D \|^2 + (1+ \delta) \| E \|^2 . $$
		The claim (and the lemma) will follow if we can bound $ \epsilon^{-1} |z_1|^{2\beta-2} \| D \|^2$.
		
		$$ \| D \|^2 =  \sum_{s, t, i , j , k, l =1}^2 g^{s\overline{t}}(x) \hat{g}_{i\overline{t}, k}(x) \hat{g}_{s\overline{j}, \overline{l}}(x)  v_i \overline{v}_j w_k \overline{w}_l =   \sum_{ i , j , k, l =1}^2 g^{1\overline{1}}(x) \hat{g}_{i\overline{1}, k}(x) \hat{g}_{1\overline{j}, \overline{l}}(x)  v_i \overline{v}_j w_k \overline{w}_l  . $$
		(The second equality follows from the first claim.) Since $ g^{1\overline{1}} (x) \leq |z_1|^{2-2\beta} $ and $ | \hat{g}_{i\overline{j}, k} (x) | \leq C \epsilon$, the estimate follows.
		
	\end{proof}

	We summarize the results obtained so far into the following
	
	\begin{proposition} \label{RMETRICS}
		There exist $H \in \mathcal{D}$ and  K\"ahler metrics $\omega$, $\omega_{B}$ on $\mathbb{C}^2 $ with cone singularities of angle $2 \pi \beta$ along $C$ such that
		\begin{itemize}
			\item $|(H^{-1})^{*} \omega - \omega_F|_{g_F} = O( r^{-1/c})$ 
			\item   Bisec($\omega_{B}$) $\leq Q_1$
			\item $ Q_2^{-1}\omega_{B} \leq \omega \leq Q_2 \omega_{B}$
			\end {itemize}
			
			for some positive constants $Q_1, Q_2$.
			
		\end{proposition}
		
		We review the definition of a metric having cone singularities in Subsection \ref{mcs}. The statement about the singularities follows from the fact that around points of $C$ one can write the metrics as (smth) $+ i\partial \overline{\partial}(F|z_1|^{2\beta})$ with  $F$ a smooth positive function and where (smth) denotes a smooth  $(1,1)$ form positive in the direction tangent to $C$ -see Lemma \ref{CSing}-. 
		The metric $\omega$ is isometric to the flat metric $\omega_F$ in the neighborhood  of $C$ at infinity, $U_{\delta/2, 2R}$.

	\subsection{The Sobolev inequality} \label{SOBOLEV}
	Around points of the curve $C$ we can find complex coordinates \(z_1, z_2\) such that \(C=\{z_1=0\}\) and the reference metric \(\omega\) is quasi-isometric to the model \(g_{(\beta)}= \beta^2 |z_1|^{2\beta-2} |dz_1|^2 + |dz_2|^2.\) On the other hand if we write $z_1= r_1^{1/\beta} e^{i\theta_1}$ we have $g_{(\beta)} = dr_1^2 + \beta^2 r_1^2 d\theta_1^2 + |dz_2|^2$, in these coordinates \(g_{(\beta)}\) is quasi-isometric to the standard Euclidean metric. In other words there are two relevant differential structures in our context, one is given by the complex coordinates we started with, the other is given by declaring the `cone coordinates' \((r_1 e^{i\theta_1}, z_2)\) to be smooth. The two structures are clearly equivalent by a map modeled on $r_1 e^{i\theta_1} \to r_1^{1/\beta}e^{i\theta_1}$ in transverse directions to $C$.
	  
	Recall that the Sobolev inequality says that there exists a constant $C$ such that for every smooth function $\phi$ in $\mathbb{R}^4$ with compact support, we have
	\begin{equation} \label{SOBIN}
	\left( \int_{\mathbb{R}^4} |\phi|^4 \right)^{1/2} \leq C \int_{\mathbb{R}^4} | \nabla \phi |^2 .
	\end{equation}
	It is clear that \ref{SOBIN} holds if the Euclidean metric is replaced by a quasi-isometric metric $g$, i.e. $\Lambda^{-1}g \leq g_{euc} \leq \Lambda g$ for some constant $\Lambda>1$. Consider now the case of the flat metric $g_F$. We claim that there exists a diffeomorphism $\Phi$ of   $\mathbb{C}^2 \setminus L$ such that   $\Lambda^{-1}g_{euc} \leq \Phi^{*}g_F \leq \Lambda g_{euc}$ for some $\Lambda >1$. This is indeed clear from the construction of $g_F$ : The spherical metric with cone singularities on $\mathbb{CP}^1$ is, up to a diffeomorphism, quasi-isometric to the round metric and the same is true for the singular metric $\overline{g}$ on the three-sphere. We conclude that the Sobolev inequality \ref{SOBIN} holds for $g_F$. Consider now our reference metric $\omega$, given by Lemma \ref{AM}. We claim that there exists a diffeomorphism $\Psi$ of $\mathbb{C}^2 \setminus C$ such that $\Lambda^{-1}\omega_{euc} \leq \Psi^{*}\omega \leq \Lambda \omega_{euc}$ for some $\Lambda >0$. Indeed $\Phi \circ H$ will do the job outside a compact set. It is easy to patch this with a diffeomorphism supported in a tubular neighborhood of the curve, modeled on $r_1 e^{i\theta_1} \to r_1^{1/\beta}e^{i\theta_1}$ in transverse directions to $C$. The claim follows. As a result of the discussion we have the following
	\begin{proposition}
		There exists a constant $C$ such that the Sobolev inequality \ref{SOBIN} holds for the reference metric $\omega$ and all functions $\phi$ with compact support, smooth in the cone coordinates.
	\end{proposition}
	
	\section{Linear analysis} \label{la}
	
	We define Banach spaces of continuous functions on $\mathbb{C}^2$ on which the Laplacian of the reference metric $\omega$ acts as a Fredholm operator. The main result is Proposition \ref{LTHRM}. To set up the corresponding linear theory for ALE metrics, Joyce invokes the explicit expression of the Green's function of the Euclidean space. We avoid those arguments by means of some others conventional methods used in the study of AC manifolds. In a few words we can say that,  provided with Donaldson's interior Schauder estimates, the standard techniques used to establish the linear theory for AC manifolds work in our setting.

	\subsection{Interior Schauder estimates} \label{mcs}
	
	Consider the model metric $g_{(\beta)} = \beta^2 |z_1|^{2\beta-2} |dz_1|^2 + |dz_2|^2$ on $\mathbb{C}^2$. We want to define H\"older continuous $(1,0)$ and $(1,1)$ forms. Write $z_1= r_1^{1/\beta} e^{i\theta_1}$ so that $g_{(\beta)} = dr_1^2 + \beta^2 r_1^2 d\theta_1^2 + |dz_2|^2$. Set $\epsilon = dr_1 + i \beta r_1 d\theta_1$. A $(1,0)$ form $\eta$ is called $C^{\alpha}$  if $\eta= f_1 \epsilon + f_2 dw$ with $f_1, f_2$ $C^{\alpha}$ functions in the usual sense in the cone coordinates $ (r_1 e^{i\theta_1}, z_2)$. It is also required that $f_1  =0$  on $\lbrace z_1 = 0 \rbrace$. If we change $\epsilon$ by $\tilde{\epsilon} = e^{i\theta} \epsilon = \beta |z_1|^{\beta-1} dz_1$, say,  in the definition; then the vanishing condition implies that we get the same space. In order to define $C^{\alpha}$ $(1,1)$ forms we use the basis $\lbrace \epsilon \overline{\epsilon}, \epsilon d\overline{w}, dw\overline{\epsilon}, dwd\overline{w} \rbrace$,  as above we ask the components to be $C^{\alpha}$ functions and we require the components  corresponding to $\epsilon d\overline{w},  dw \overline{\epsilon}$ to vanish on the singular set.  Finally we set $C^{2, \alpha}$ to be the space of $C^{\alpha}$ functions $u$ such that $\partial u$ and $\partial \overline{\partial} u$ are $C^{\alpha}$. We define the $C^{\alpha}$ norm of a function $\|f\|_{\alpha}$ as the sum of its $C^0$ norm $\|f\|_0$ and its $C^{\alpha}$ semi-norm $[f]_{\alpha}$; in the cone coordinates this last semi-norm agrees with the standard
	$$ [f]_{\alpha} = \sup_{x, y} \frac{|f(x) - f(y)|}{|x-y|^{\alpha}} . $$
	To define the $C^{2, \alpha}$ norm of a function $f$ we simply add  $\|f\|_{\alpha}$,  the $C^{\alpha}$ norm of the components of $\partial f$ in the basis $ \{ \epsilon, dw \}$ and the $C^{\alpha}$ norm of the components of $i \partial \overline{\partial} f$ in the  basis  $\lbrace \epsilon \overline{\epsilon}, \epsilon d\overline{w}, dw\overline{\epsilon}, dwd\overline{w} \rbrace$.
	
	We are interested in the equation $\triangle u =f$, where $\triangle$ is the Laplace operator of $g_{(\beta)}$. We define $L^2_1$ on domains of $\mathbb{C}^2$  by means of the usual norm $ \| u \|_{L^2_1} = \int |\nabla u|^2 + \int u^2$.  In the cone coordinates $(r_1 e^{i\theta_1}, z_2)$,  $ \beta^2 g_{euc} \leq g_{(\beta)} \leq (1+\beta^2) g_{euc}$ and  $L^2_1$ coincides with the standard Sobolev space. Let $u$ be a function that is locally in $L^2_1$. We say that $u$ is a weak solution of $\triangle u =f$ if
	$$ \int \langle\nabla u, \nabla \phi \rangle = - \int f\phi$$ 
	for all smooth compactly supported $\phi$.

	Fix $\alpha < \beta^{-1} -1$ and let $u$ be a weak solution of $\triangle u = f$ on $B_2$ with $f \in C^{\alpha}(B_2)$. Then \cite{Donaldson} shows that $u \in C^{2, \alpha}(B_1)$ and there is a constant $C$ -independent of $u$- such that 
		\begin{equation} \label{SCH}
		\|u\|_{C^{2, \alpha}(B_1)} \leq C \left( \|f\|_{C^{ \alpha}(B_2)} + \|u\|_{C^0(B_2)} \right) .
		\end{equation}

	We mention 3 differences between this result and the standard Schauder estimates
	\begin{itemize}
		\item We don't have estimates for all the second derivatives of $u$. (E.g. $\partial^2 u / \partial r_1^2$).
		\item If $\triangle u \in C^{\alpha}$ then the component of $\partial u$ corresponding to $\epsilon$ needs to vanish along the singular set.
		\item The estimates require $\alpha < \beta^{-1} -1$.
	\end{itemize}
	
	These differences  can be explained by the fact that if $p$ is a point outside the singular set  and $\Gamma_p = G(., p)$, where $G$ is the Green's function for $\triangle$;  then around points of $\lbrace z_1 =0 \rbrace$ one can write a convergent series expansion
	
	\begin{equation} \label{expansion}
	\Gamma_p = \sum_{j, k \geq 0} a_{j, k} (z_2) r_1^{(k/\beta) + 2j} \cos (k\theta_1)
	\end{equation}
	with $a_{j, k}$ smooth functions.
	The proof of the Schauder estimates in \cite{Donaldson} uses classical methods. The expression \ref{expansion} is proved by separation of variables and a check of convergence. The coefficients $a_{j, k}$ are given in terms of Bessel's functions. 
	If $u$ is a function with compact support such that $\triangle u = f$, then
	\begin{equation} \label{green}
	u(x) = \int G(x, y) f(y) dy .
	\end{equation}
	To show the estimate \ref{SCH} one has to differentiate \ref{green} twice. The proof then follows the one of the standard  Scahuder estimates with some modifications due to the fact that $\triangle$ is not translation invariant.

	Let $\eta$ be a  $(1, 1)$ form on $B_2$ with $\| \eta \|_{C^{\alpha}(B_2)} \leq \epsilon$. Assume that $\eta$ has support contained in $B_1$ and consider the operator $ Lu = \triangle u + \langle \partial \overline{\partial} u, \eta \rangle$. If $\epsilon <1/(2C)$ we can use \ref{SCH} to get the estimate
	\begin{equation} \label{prev}
	\|u\|_{C^{2, \alpha}(B_1)} \leq 2C \left( \|Lu\|_{C^{ \alpha}(B_2)} + \|u\|_{C^0(B_2)} \right)
	\end{equation}
	for all functions  $u \in C^{2, \alpha}(B_2)$.
	
	Now let $C$ be our smooth curve in $\mathbb{C}^2$ and let $\omega$ be a (smooth) K\"ahler metric in the complement of $C$. We say that $\omega$ is a metric with cone singularities along $C$ of angle $2\pi \beta$ if around each $p \in C$ we can find holomorphic coordinates $(z_1, z_2)$ such that 
	\begin{equation} \label{DEF}
	\omega = \omega_{(\beta)} + \eta
	\end{equation}
	with $\eta \in C^{\alpha}$ and $\eta(p) =0$. More precisely,  $\eta(p) =0$  means that the coefficients of $\eta$ in the basis  $\lbrace \epsilon \overline{\epsilon}, \epsilon d\overline{w}, dw\overline{\epsilon}, dwd\overline{w} \rbrace$ vanish at $p$. 
	
	Given our curve $C$ and a bounded open subset $U$ of $\mathbb{C}^2$ we can define the space $C^{2, \alpha}(U)$ by taking a finite cover of $U$ with coordinates in which $C= \lbrace z_1 =0 \rbrace$. Let $p \in C$ and write $ \omega$ as in \ref{DEF}. After a dilation and multiplying by a cut-off function we can assume that in a smaller neighborhood of $p$ we have $\triangle_{\omega} = L$ with $L$ as in \ref{prev}. From here we get that
	
	\begin{equation} \label{LAP}
	\|u\|_{C^{2, \alpha}(U)} \leq C \left( \| \triangle_{\omega} u \|_{C^{ \alpha}(V)} + \|u\|_{C^0(V)} \right)
	\end{equation}
	for all $u \in C^{2, \alpha}(V)$. In \ref{LAP} we assume that $U$ is compactly contained in $V$. The constant $C$ depends on $\omega, U, V$.

	Finally we note that our reference metrics in Proposition \ref{RMETRICS} have cone singularities as we have defined because of the following
	
	\begin{lemma} \label{CSing}
		Let $\omega$ be a K\"ahler metric on $\mathbb{C}^2 \setminus C$ such that around each $p \in C$ we can find holomorphic coordinates $(z_1, z_2)$ where
		$$ \omega = \Omega + i \partial \overline{\partial} (F|z_1|^{2\beta})$$
		with $\Omega$ a smooth $(1, 1)$ form such that $\Omega(\partial/\partial z_2, \partial/\partial \overline{z}_2) (p) >0$ and $F$ a smooth positive function, then $\omega$ has cone singularities in the sense of \ref{DEF}.
	\end{lemma}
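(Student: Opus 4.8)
The goal is to show that a metric of the form $\omega = \Omega + i\partial\overline{\partial}(F|z_1|^{2\beta})$, with $\Omega$ a smooth $(1,1)$-form positive in the $\partial/\partial z_2$ direction at $p$ and $F$ a smooth positive function, can be rewritten in the normal form \ref{DEF}, i.e. as $\omega_{(\beta)} + \eta$ with $\eta \in C^\alpha$ and $\eta(p)=0$. The first thing I would do is expand the leading singular term $i\partial\overline{\partial}(F|z_1|^{2\beta})$ and identify the pieces that reproduce the model $\omega_{(\beta)} = \beta^2|z_1|^{2\beta-2}|dz_1|^2 + |dz_2|^2$. Writing $|z_1|^{2\beta} = (z_1\overline{z_1})^\beta$ and differentiating, the most singular contribution is $F(p)\,\beta^2|z_1|^{2\beta-2}\,i\,dz_1\wedge d\overline{z_1}$, which matches $\omega_{(\beta)}$ up to the positive constant $F(p)$; after the harmless rescaling $z_1 \mapsto F(p)^{1/(2\beta)}z_1$ (absorbing the constant) this becomes exactly the $\epsilon\overline{\epsilon}$ part of the model.

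Next I would carefully account for the remaining terms generated by the Leibniz rule when the derivatives fall on $F$ rather than on $|z_1|^{2\beta}$. These are terms of the schematic form $|z_1|^{2\beta-2}\overline{z_1}\,\partial F$, $|z_1|^{2\beta-2}z_1\,\overline{\partial}F$, and $|z_1|^{2\beta}\,\partial\overline{\partial}F$. The key computation is to express each of these in the Hölder frame $\{\epsilon\overline{\epsilon}, \epsilon\, d\overline{w}, dw\,\overline{\epsilon}, dw\,d\overline{w}\}$ introduced before \ref{DEF}, and to check two things: that every component function is $C^\alpha$ in the coordinates $(r_1 e^{i\theta_1}, z_2)$, and that the mixed components (those multiplying $\epsilon\,d\overline{w}$ and $dw\,\overline{\epsilon}$) vanish on $\{z_1=0\}$. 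The factors $|z_1|^{2\beta-2}\overline{z_1} = |z_1|^{2\beta-1}e^{-i\theta_1}$ behave like $r_1^{(2\beta-1)/\beta}$, and one verifies that the resulting components lie in $C^\alpha$ precisely when $\alpha < \beta^{-1}-1$, which is exactly the hypothesis under which our Hölder theory is set up. The smooth part $\Omega$ is handled separately: since it is smooth it is automatically $C^\alpha$, its $dw\,d\overline{w}$-component is positive at $p$ by assumption so it supplies the $|dz_2|^2$ part of $\omega_{(\beta)}$ after a constant rescaling of $z_2$, and its mixed components vanish on $\{z_1=0\}$ only after a holomorphic change of coordinates.

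The part I expect to be the main obstacle is precisely this last point: arranging that the \emph{mixed} components vanish on the singular set. The smooth form $\Omega$ generically has a nonzero $dz_1\,d\overline{z_2}$-component along $\{z_1=0\}$, and when rewritten in the frame this becomes a multiple of $\epsilon\,d\overline{w}$ whose coefficient is $O(|z_1|^{1-\beta})$ — this does vanish on the singular set, but one must check it does so in a $C^\alpha$ fashion, and similarly for the terms coming from $\partial F$. The clean way to force all the off-diagonal structure into the required form is to perform a holomorphic change of coordinates straightening the metric, exactly as in the bisectional curvature computation of Subsection \ref{upbound}: one chooses new holomorphic coordinates $(\tilde z_1, \tilde z_2)$ with $\tilde z_1 = z_1$ and $\tilde z_2 = z_2 + (\text{quadratic in } z_1, z_2)$ so as to kill the first-order behaviour of the $dw\,d\overline{w}$ and mixed smooth components at $p$, leaving the off-diagonal terms genuinely vanishing on $\{z_1=0\}$.

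**Summary of steps.**
First, expand $i\partial\overline{\partial}(F|z_1|^{2\beta})$ by Leibniz and isolate the leading $F(p)\beta^2|z_1|^{2\beta-2}i\,dz_1 d\overline{z_1}$ term. Second, normalize the constants $F(p)$ and $\Omega_{2\overline{2}}(p)$ by rescaling $z_1$ and $z_2$ so the leading part is exactly $\omega_{(\beta)}$. Third, apply a holomorphic quadratic change of coordinates to straighten the smooth part $\Omega$ and remove the first-order off-diagonal terms at $p$. Fourth, rewrite every remaining term in the Hölder frame $\{\epsilon\overline{\epsilon}, \epsilon d\overline{w}, dw\overline{\epsilon}, dw d\overline{w}\}$ and verify, using $\alpha < \beta^{-1}-1$, that all components are $C^\alpha$, that the mixed components vanish on $\{z_1=0\}$, and that the whole error $\eta$ vanishes at $p$. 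This exhibits $\omega$ in the form \ref{DEF} and completes the proof.
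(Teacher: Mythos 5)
Your proposal is correct and, at its core, is the same argument as the paper's: the paper's proof consists exactly of your steps one and two — the Leibniz expansion $i\partial\overline{\partial}(F|z_1|^{2\beta}) = |z_1|^{2\beta}i\partial\overline{\partial}F + \beta|z_1|^{2\beta-2}(\overline{z}_1 idz_1\overline{\partial}F + z_1\partial F d\overline{z}_1) + \beta^2 F|z_1|^{2\beta-2}idz_1 d\overline{z}_1$ followed by the constant rescaling $\tilde{z}_1 = az_1$, $\tilde{z}_2 = bz_2$ — with the frame verification (your step four) left implicit; incidentally, your exponent $a = F(p)^{1/(2\beta)}$ is the right one, where the paper writes $F(p)^{1/2}$. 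The one thing to flag is that your step three, the quadratic holomorphic change of coordinates, is superfluous and the reason you give for it is off: Definition \ref{DEF} asks only that the $\epsilon\, d\overline{w}$ and $dw\,\overline{\epsilon}$ components vanish on $\{z_1=0\}$ and that $\eta(p)=0$, and both come for free, since the conversion $dz_1 = \beta^{-1}|z_1|^{1-\beta}\tilde{\epsilon}$ forces every mixed component to carry a factor $|z_1|^{1-\beta} = r_1^{1/\beta -1}$, which vanishes on the singular set and is $C^{\alpha}$ precisely because $\alpha < \beta^{-1}-1$. Killing first-order Taylor terms of $\Omega$ at the single point $p$ neither helps with the genuine check (the $C^{\alpha}$ regularity of these coefficients, since $\Omega_{1\overline{2}}$ remains generically nonzero along $\{z_1=0\}$ away from $p$ after any such change) nor is it needed for $\eta(p)=0$, which already follows from the constant rescaling together with the vanishing of the mixed and $\epsilon\overline{\epsilon}$-type error terms on $\{z_1=0\}$.
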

	
	\begin{proof}
		This follows from the computation
		
		$$ i \partial \overline{\partial} (F |z_1|^{2\beta}) = |z_1|^{2\beta} i \partial \overline{\partial} F + \beta |z_1|^{2\beta-2} \left( \overline{z}_1 idz_1 \overline{\partial}F + z_1 \partial F d\overline{z}_1 \right) + \beta^2 F |z_1|^{2\beta -2} idz_1 d\overline{z}_1 . $$
		Set $\tilde{z_1} = a z_1$, $\tilde{z_2} = b z_2$ with $a = F(p)^{1/2}$ and $b=\left( \Omega(\partial/\partial z_2, \partial/\partial \overline{z}_2) (p) \right)^{1/2}$ to get \ref{DEF}
		
	\end{proof}
	
	\subsection{Weighted H\"older spaces} \label{wh}
	
	We introduce weights to the previous H\"older spaces. In this subsection we work  with the flat metrics $g_F$ from Section \ref{Fmetrics}. The property we shall exploit the most is the one of being a metric cone. If $\gamma$ is the weight parameter in our space of functions, then a function in the space is bounded by $r^{\gamma}$. In particular, if $\gamma < 0$, we allow our functions to blow up at the apex of the cone. 
	
	Let $g_F$ be the flat metric. Write $B_R = \lbrace r < R \rbrace$ for the metric ball of radius $R$ around the origin.  Consider the annulus $A_1 = B_2 \setminus \overline{B_1}$ and the  bigger one $ \tilde{A_1} = B_4 \setminus \overline{B_{1/2}}$. We know that around each $p \in L \cap A_1$ we can find coordinates $(z_1, z_2)$ in which $g_F = g_{(\beta)}$ and that $g_F$ is locally isometric to the euclidean metric outside $L$. We fix a finite cover of $A_1$ by such coordinates and define the spaces $C^{\alpha} (A_1)$ and $C^{2, \alpha}(A_1)$ in the obvious way. Alternatively (in more intrinsic terms)  we can define the space $C^{\alpha}$ functions in any domain by considering the distance induced by $g_F$ and applying the standard definition. To measure the $C^{2, \alpha}$ norm of a function   we can take an orthonormal basis for the $(1,0)$ forms $\lbrace \tau_1, \tau_2 \rbrace$, for example by applying Gram-Schmidt to $\lbrace dz, dw \rbrace$ over $A_1 \setminus L$,  and sum the $C^{\alpha}$ norm of  the components of $\partial u$ and $\partial \overline{\partial} u$ with respect to $\tau_i$ and $\tau_i \overline{\tau_j}$ respectively. The result is independent of the choice of orthonormal basis $\{ \tau_1, \tau_2 \}$.  One can replace $A_1$ with $\tilde{A_1}$  in the above discussion without any change.
	It follows from the interior Schauder estimates that there is a constant $C$ such that for every $u \in C^{2, \alpha}(\tilde{A_1})$
	
	\begin{equation} \label{int estimate}
	\|u\|_{C^{2, \alpha}(A_1)} \leq C \left( \|f\|_{C^{ \alpha}(\tilde{A_1})} + \|u\|_{C^0(\tilde{A_1})} \right) , 
	\end{equation}
	where $\triangle u = f$ is the Laplacian  of $u$ with respect to $g_F$.

	Let $\gamma \in \mathbb{R}$, we want to define the space $C^{\alpha}_{\gamma}$.  For $\lambda > 0$,  denote $A_{\lambda} = B_{2\lambda} \setminus B_{\lambda}$. In other words $A_{\lambda} = D_{\lambda}(A_1)$ where $D_{\lambda}$ is the map given in spherical coordinates by $D_{\lambda} (r, \theta) = (\lambda r , \theta)$. Note that in complex coordinates $D_{\lambda} (z, w) = (\lambda^{1/c} z, \lambda^{1/c}w)$. Let $f$ be a continuous function on $\mathbb{C}^2 \setminus \lbrace 0 \rbrace$. Define $ f_{\lambda, \gamma} = \lambda^{-\gamma} . (f \circ D_{\lambda})$ and  think of it as a function on $A_1$. Finally we set
	
	\begin{equation} \label{norm1}
	\| f \|_{\alpha, \gamma} = \sup_{\lambda > 0} \|f_{\lambda, \gamma} \|_{C^{\alpha}(A_1)} .
	\end{equation}
	It follows that if $f \in C^{\alpha}_{\gamma}$ (the space of functions in $\mathbb{C}^2 \setminus \lbrace 0 \rbrace$  for which the above norm is finite),  then $|f(x)| \leq A r(x)^{\gamma}$ for some constant $A$. In fact, if we let $\| f \|_{0, \gamma} = \sup_{\lambda > 0} \|f_{\lambda, \gamma} \|_{C^0(A_1)}$ we clearly have $\|f\|_{0, \gamma} \leq \|f \|_{\alpha, \gamma}$ and $ \| f\|_{0, \gamma}$ is easily seen to be equivalent to $ \sup_{x} r(x)^{-\gamma} |f(x)|$.
	It is clear that if we use $\tilde{A_1}$ instead we would get an equivalent norm, i.e, there exist a constant $C$ such that
	
	$$ \sup_{\lambda > 0} \|f_{\lambda, \gamma} \|_{C^{\alpha}(\tilde{A_1})} \leq C  \| f \|_{\alpha, \gamma} . $$
	Having said what is the space $C^{2, \alpha}$ on $A_1$ we can define the space $C^{2, \alpha}_{\delta}$ to be the space of functions $u$ on $\mathbb{C}^2 \setminus \lbrace 0 \rbrace$  for which 
	
	\begin{equation} \label{norm2}
	\| u \|_{2, \alpha, \delta} = \sup_{\lambda > 0} \|u_{\lambda, \delta} \|_{C^{2, \alpha}(A_1)}
	\end{equation}
	is finite. As above $\delta$ is any fixed real number. 
	
	With these definitions we claim that $\triangle$ defines a bounded operator from $C^{2, \alpha}_{\delta}$ to $C^{\alpha}_{\delta -2}$. Indeed,  from the expression
	
	\begin{equation} \label{lap}
	\triangle = \frac{\partial^2}{\partial r^2} + \frac{3}{r} \frac{\partial}{\partial r} + \frac{1}{r^2} \triangle_{\overline{g}} , 
	\end{equation}
	we get that $\triangle u_{\lambda} = \lambda^2 (\triangle u)_{\lambda}$.  We denote $u_{\lambda} = u \circ D_{\lambda}$. Now take  $ u \in C^{2, \alpha}_{\delta}$,  write $\triangle u =f$ and let $\lambda > 0$. Then
	
	$$ f_{\lambda, \delta -2} =  \lambda^{-\delta + 2} (\triangle u)_{\lambda}= \lambda^{-\delta} \triangle u_{\lambda}$$
	and our claim follows from the fact that $\triangle : C^{2, \alpha}(A_1) \to C^{\alpha}(A_1)$ is a bounded operator. 
	
	Let us give an equivalent norm in $C^{2, \alpha}_{\delta}$ which will make evident the fact that if $u$ belongs to this space then $| \partial \overline{\partial} u |_{g_F} = O( r^{\delta -2})$. In order to do this we note that on $\mathbb{C}^2 \setminus L$ we have an (up to a factor of $\sqrt{2}$) orthonormal basis $\lbrace \tau_1 , \tau_2 \rbrace$ (w.r.t. $g_{F}$) of the $(1, 0)$ forms such that $ D_{\lambda}^{*} \tau_i = \lambda \tau_i $. Given a function $u$ we write $\partial u = \sum_i u_i \tau_i$ and $\partial \overline{\partial} u = \sum_{i, j} u_{i\overline{j}} \tau_i \overline{\tau_j}$. We claim that
	\begin{equation} \label{norm3}
	\|u\|_{2, \alpha, \delta} = \|u\|_{0, \delta} + \sum_i \|u_i\|_{\alpha, \delta -1} + \sum_{i,j} \|u_{i\overline{j}}\|_{\alpha, \delta-2}
	\end{equation}
	defines an equivalent norm as the previous one. (Our claim justifies the abuse of notation since \ref{norm3} is not exactly equal to \ref{norm2}.) Since $\triangle u = u_{1\overline{1}} + u _{2\overline{2}}$ we see again that $\triangle: C^{2, \alpha}_{\delta} \to C^{\alpha}_{\delta -2}$ is a bounded map.
	We compute $\|u_{\lambda, \delta} \|_{C^{2, \alpha}(A_1)}$ using the basis $\lbrace \tau_1, \tau_2 \rbrace$. Since $D_{\lambda}$ is holomorphic we have that $\partial u_{\lambda} = D_{\lambda}^{*} \partial u = \lambda \sum_i (u_i)_{\lambda} \tau_i$ and that $\partial \overline{\partial} u_{\lambda} = D_{\lambda}^{*} \partial \overline{\partial} u = \lambda^2 \sum_{i, j} (u_{i\overline{j}})_{\lambda} \tau_i \overline{\tau_j}  $. Our claim then follows from
	$$\|u_{\lambda, \delta} \|_{C^{2, \alpha}(A_1)}= \| \lambda^{-\delta} u_{\lambda} \|_{C^0(A_1)} + \sum_i \| \lambda^{-\delta +1} (u_i)_{\lambda} \|_{C^{\alpha}(A_1)} + \sum_{i,j} \| \lambda^{-\delta +2} (u_{i\overline{j}})_{\lambda} \|_{C^{\alpha}(A_1)} . $$
	In arguments in which the H\"older exponent $\alpha$ is not crucially needed we will say that a function is in $C^2$ if the components $u_{i\overline{j}}$ are continuous. Similarly we can give a definition of $C^2_{\delta}$.

	We are now ready  to state our first main estimate
	
	\begin{lemma} \label{Est 1}
		Let $\alpha < \beta^{-1} -1$ and $ \delta \in \mathbb{R}$. Then there is a constant $C = C(\alpha, \delta)$ such that for every $u \in C^{2, \alpha}_{\delta}$ with $\triangle u =f$ 
		$$ \|u\|_{2, \alpha, \delta} \leq C \left( \|f\|_{\alpha, \delta - 2} + \| u \|_{0, \delta} \right) . $$
	\end{lemma}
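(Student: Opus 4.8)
The plan is to deduce the weighted estimate from the fixed-scale interior estimate \ref{int estimate} by exploiting the exact homogeneity of $g_F$ under the dilations $D_\lambda$. The weighted norms \ref{norm1}, \ref{norm2} are designed precisely so as to be scale-covariant with respect to $D_\lambda$; after rescaling each annulus $A_\lambda$ back to the fixed annulus $A_1$, the single unweighted estimate can then be applied with a constant that does not depend on $\lambda$.

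Concretely, I would fix $\lambda > 0$ and apply \ref{int estimate} to the rescaled function $u_{\lambda,\delta} = \lambda^{-\delta}(u\circ D_\lambda)$ on $A_1$, controlling the data on $\tilde{A_1} = B_4\setminus\overline{B_{1/2}}$. The computation that makes everything work is the scaling of the Laplacian already recorded in the discussion of \ref{lap}: since $D_\lambda$ is homothetic, $\triangle(u\circ D_\lambda) = \lambda^2(\triangle u)\circ D_\lambda$, and therefore
$$\triangle u_{\lambda,\delta} = \lambda^{-\delta}\triangle(u\circ D_\lambda) = \lambda^{-\delta+2}(f\circ D_\lambda) = f_{\lambda,\delta-2}.$$
Feeding this into \ref{int estimate} gives
$$\|u_{\lambda,\delta}\|_{C^{2,\alpha}(A_1)} \leq C\left(\|f_{\lambda,\delta-2}\|_{C^\alpha(\tilde{A_1})} + \|u_{\lambda,\delta}\|_{C^0(\tilde{A_1})}\right),$$
where $C$ is the fixed interior-estimate constant, independent of $\lambda$ precisely because, after rescaling, the estimate takes place on the fixed annulus $A_1$ with the fixed metric $\omega_F$.

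It remains to pass from the $\tilde{A_1}$-norms on the right to the weighted norms, and here I would invoke the equivalence already asserted just before \ref{norm2}. Covering $\tilde{A_1}$ by the finitely many dilated annuli $A_{1/2}, A_1, A_2$ and using the relation $f_{\lambda,\gamma}\circ D_\mu = \mu^\gamma f_{\lambda\mu,\gamma}$ on $A_1$ (together with the fact that, for $\mu$ in the fixed finite set $\{1/2,1,2\}$, the homothety $D_\mu$ distorts $C^\alpha$ norms only by bounded factors) yields $\|f_{\lambda,\delta-2}\|_{C^\alpha(\tilde{A_1})} \leq C\|f\|_{\alpha,\delta-2}$, uniformly in $\lambda$, and the identical covering argument (without the Hölder factor) gives $\|u_{\lambda,\delta}\|_{C^0(\tilde{A_1})}\leq C\|u\|_{0,\delta}$. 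Combining these and taking the supremum over $\lambda>0$ in the definition \ref{norm2} of $\|u\|_{2,\alpha,\delta}$ produces the claimed bound. The hypothesis $\alpha < \beta^{-1}-1$ enters only through Proposition \ref{schauder est}, on which \ref{int estimate} rests, so I would not reprove it here.

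I expect no serious obstacle: the argument is a routine scaling-and-patching reduction to the interior estimate. The one point that needs genuine care is the overlap bookkeeping on $\tilde{A_1}$ — verifying that the finite cover by dilated annuli and the exponent shift $\delta \mapsto \delta - 2$ combine so that the resulting constant is truly independent of $\lambda$ — but this is exactly the $A_1$-versus-$\tilde{A_1}$ norm equivalence already stated in the text, so it amounts to a short verification rather than a new difficulty.
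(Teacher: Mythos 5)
Your proposal is correct and follows essentially the same route as the paper: rescale by $D_\lambda$, use $\triangle u_{\lambda,\delta}=f_{\lambda,\delta-2}$, apply the fixed-scale interior estimate \ref{int estimate} on $A_1$ with data on $\tilde{A_1}$, invoke the $A_1$-versus-$\tilde{A_1}$ norm equivalence, and take the supremum over $\lambda$. The only difference is that you spell out the covering/overlap bookkeeping behind that equivalence, which the paper simply asserts as clear.
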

	
	\begin{proof}
		Write $\delta = \gamma +2$. Let $\lambda > 0$ we apply the interior estimate \ref{int estimate} to $u_{\lambda, \delta} = \lambda^{-\delta} u_{\lambda}$ to get
		
		$$ \|u_{\lambda, \delta} \|_{C^{2, \alpha}(A_1)} \leq C \left( \| \lambda^{-\delta +2} f_{\lambda} \|_{C^{\alpha}(\tilde{A_1})} + \| \lambda^{-\delta} u_{\lambda} \|_{C^0(\tilde{A_1})} \right) .$$
		Note that the first term on the r.h.s. is bounded by $\|f\|_{\alpha, \gamma}$ and the second term is bounded by $\|u\|_{0, \gamma +2}$.
		
	\end{proof}
	
	\begin{remark}
		In fact we have proved that if $u$ is locally in $C^{2, \alpha}$, $\triangle u \in C^{\alpha}_{\delta -2}$ and $\| u \|_{0, \delta}$ is finite, then $u \in C^{2, \alpha}_{\delta}$ and the above estimate holds.
	\end{remark}
	
	Our next goal is to bound $\|u\|_{0, \delta}$ in terms of $\|f\|_{\alpha, \delta - 2}$. It turns out that this is true,  except when $\delta$ belongs to the discrete set of `Indicial Roots'. In order to explain what is this set we digress a little and discuss some basics of spectral theory for $\triangle_{\overline{g}}$, the Laplacian of the singular metric on the 3-sphere. We invoke the compact embedding of $L^2_1$ in $L^2$  and the spectral theorem, our arguments are quite standard -see Section 0.6 in \cite{GriffithsHarris}-.  
	
	First we note that on $(S^3, \overline{g})$ there is an obvious definition of the spaces $L^2$ and $L^2_1$. Since there is a diffeomorphism $\chi$ of $S^3 \setminus L$ such that $\chi^{*} \overline{g}$ is quasi-isometric to a smooth metric on $S^3$ we see that $L^2$ and $L^2_1$ correspond under $\chi$ to the usual spaces. In particular we have that $L^2_1 \subset L^2$ is compact. 
	If we write the norms as $ \|f\|_{L^2}^2 = \int f^2$ and $\| u \|_{L^2_1}^2 = \int u^2 + \int | \nabla u |^2 $ we see that $ f \in L^2$ defines a bounded linear functional $T$ on $L^2_1$ by $ T(\phi) = \int f\phi$. If $u$ is such that $ T = \langle u, - \rangle_{L_1^2}$ then $u$ is said to be a weak solution of $-\triangle_{\overline{g}} u + u = f$. The map $K(f) = u$ is a bounded linear map between $L^2$ and $L^2_1$, composing this map with the compact inclusion we have a map $K: L^2 \to L^2$ which is compact and self-adjoint. It follows from the spectral theorem that we can find an orthonormal basis $\lbrace \phi_i \rbrace_{i\geq 0}$ of $L^2$ such that $K(\phi_i) = s_i \phi_i$ and $s_i \to 0$. Unwinding the definitions we get that $ \triangle_{\overline{g}} \phi_i = - \lambda_i \phi_i$ with $0= \lambda_0 \leq \lambda_1 \leq \lambda_2 \leq \ldots $ and $\lambda_i = (1-s_i)/s_i \to \infty$.
	For each $\lambda_i$ define $ \delta_i^{\pm}$ to be the solutions of $ \lbrace s(s+2) = \lambda_i \rbrace$ with $\delta_i^{+}$ non-negative  and $\delta_{i}^{-}$ non-positive (in fact $\leq -2$). The set of Indicial Roots is set to be $ I = \lbrace \delta_i^{\pm} , i \geq 0 \rbrace$. With this definition we can state the following
	
	\begin{lemma} \label{inj}
		Let $ u \in C^2_{\delta} $ be such that $\triangle u =0$ and  $ \delta \notin I$. Then $u=0$. 
	\end{lemma}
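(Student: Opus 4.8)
The plan is to reduce the equation to a family of ordinary differential equations in the radial variable by projecting $u$ onto the eigenfunctions $\phi_i$ of $\triangle_{\overline{g}}$, and then to exploit the fact that membership in $C^2_\delta$ forces the pointwise bound $|u| \leq A\, r^\delta$ on all of $\mathbb{C}^2 \setminus \lbrace 0 \rbrace$, i.e. for every $r \in (0,\infty)$ simultaneously. For each $i \geq 0$ I would set $u_i(r) = \int_{S^3} u(r, \cdot)\,\phi_i \, dV_{\overline{g}}$, the $i$-th Fourier coefficient of the restriction of $u$ to the geodesic sphere of radius $r$. Since $\lbrace \phi_i \rbrace$ is complete in $L^2(S^3, \overline{g})$, it suffices to show that $u_i \equiv 0$ for every $i$.

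The first substantive step is to derive the radial ODE. Using the expression \ref{lap} for $\triangle$ together with $\triangle u = 0$, integrating against $\phi_i$ over $S^3$, and transferring the angular Laplacian onto $\phi_i$ by self-adjointness (so that $\int_{S^3}(\triangle_{\overline{g}}u)\,\phi_i = -\lambda_i u_i$), one obtains the Euler equation
$$u_i'' + \frac{3}{r}u_i' - \frac{\lambda_i}{r^2}u_i = 0.$$
Its indicial equation is precisely $s(s+2) = \lambda_i$, whose roots are $\delta_i^{+}$ and $\delta_i^{-}$. Because $\lambda_i \geq 0$ the discriminant $4(1+\lambda_i)$ is strictly positive, so the two roots are always distinct and no logarithmic solution appears; the general solution is therefore $u_i(r) = a_i\, r^{\delta_i^{+}} + b_i\, r^{\delta_i^{-}}$.

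It then remains to kill the coefficients using the growth bound. From $\|u\|_{0,\delta} < \infty$ one has $|u(r,\theta)| \leq A\, r^\delta$, hence $\|u(r,\cdot)\|_{L^2(S^3)} \leq C\, r^\delta$ and, by Cauchy--Schwarz, $|u_i(r)| \leq C\, r^\delta$ for every $r \in (0,\infty)$. Because $\delta_i^{-} < \delta_i^{+}$, a function of the form $a_i\, r^{\delta_i^{+}} + b_i\, r^{\delta_i^{-}}$ can satisfy such a bound on the entire half-line only in degenerate cases: letting $r \to \infty$ forces $\delta_i^{+} \leq \delta$ whenever $a_i \neq 0$, while letting $r \to 0$ (where the smaller exponent dominates) forces $\delta_i^{-} \geq \delta$ whenever $b_i \neq 0$. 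If both coefficients were nonzero these would give $\delta_i^{+} \leq \delta \leq \delta_i^{-}$, contradicting $\delta_i^{-} < \delta_i^{+}$; if exactly one survived, the single remaining pure power $r^{\delta_i^{\pm}}$ would have to be bounded by $C r^\delta$ on all of $(0,\infty)$, which forces $\delta = \delta_i^{+}$ or $\delta = \delta_i^{-}$. Since $\delta \notin I$, none of these can occur, so $a_i = b_i = 0$ and $u_i \equiv 0$; completeness of $\lbrace \phi_i \rbrace$ then yields $u \equiv 0$.

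The main obstacle is the rigorous justification of this reduction in the presence of the cone singularity along $L \subset S^3$: one must differentiate $u_i(r)$ twice under the integral sign and integrate by parts to move $\triangle_{\overline{g}}$ onto $\phi_i$ without picking up a boundary contribution from the singular set. Away from $L$ and the apex the function $u$ is smooth by interior elliptic regularity, so the only delicate point is the behaviour across $L$; here the hypothesis $u \in C^2_\delta$ (which, through the defining vanishing conditions on the components of $\partial u$ and $\partial\overline{\partial}u$, encodes exactly the admissible behaviour at the cone) together with the weak formulation used to define the eigenfunctions $\phi_i$ should guarantee that the integration by parts is valid with no boundary term. Once this is established, the ODE analysis and the growth argument above are elementary.
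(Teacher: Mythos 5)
Your proposal is correct and takes essentially the same approach as the paper: separation of variables with respect to the eigenfunctions $\phi_i$ of $\triangle_{\overline{g}}$, the Euler equation $u_i'' + \frac{3}{r}u_i' - \frac{\lambda_i}{r^2}u_i = 0$ with indicial roots $\delta_i^{\pm}$, and the observation that a solution $a_i r^{\delta_i^{+}} + b_i r^{\delta_i^{-}}$ obeying $|u_i(r)| \leq C r^{\delta}$ on all of $(0,\infty)$ with $\delta \notin I$ must vanish. The paper compresses the final step into one line (``Since $\delta \neq \delta_i^{\pm}$ we get $u_i = 0$''), and your case analysis as $r \to 0$ and $r \to \infty$ is precisely the intended justification, just as your flagged concern about integrating by parts across the singular set is a technical point the paper passes over silently.
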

	
	\begin{proof}
		Write $ u (r, \theta) = \sum_{i=0}^{\infty} u_i (r) \phi_i (\theta)$, where $ u_i (r) = \int_{S^3} u(r, .) \phi_i$. It follows from H\"older's inequality that if $ |u| \leq C r^{\delta}$ then $ |u_i (r) | \leq C (\mbox{Vol} (\overline{g}))^{1/2} r^{\delta}$. On the other hand the equation $\triangle u =0$ implies 
		$$ u_i'' + \frac{3}{r} u_i' - \frac{\lambda_i} { r^2} u_i =0 , $$
		so that $u_i = A r^{\delta_i^{+}} + B  r^{\delta_i^{-}}$  for some constants $A$ and $B$. Since $\delta \not= \delta_i^{\pm}$ we get that $u_i=0$.
		
	\end{proof}
	
	\begin{proposition}
		Let $\alpha < \beta^{-1} -1$ and $ \delta \in \mathbb{R} \setminus I$. Then there is $C=C(\alpha, \delta)$ such that
		\begin{equation} \label{Est 2}
		\|u\|_{2, \alpha, \delta} \leq C \|f\|_{\alpha, \delta -2}
		\end{equation}
		for every $u \in C^{2, \alpha}_{\delta}$ with $ \triangle u = f$ . 
	\end{proposition}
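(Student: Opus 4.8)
I would deduce this Proposition from the two ingredients already in hand: the scale-invariant interior estimate of Lemma \ref{Est 1}, which gives $\|u\|_{2,\alpha,\delta}\leq C(\|f\|_{\alpha,\delta-2}+\|u\|_{0,\delta})$, and the injectivity statement of Lemma \ref{inj}. Because of Lemma \ref{Est 1}, it suffices to prove the weaker bound
$$\|u\|_{0,\delta}\leq C\|f\|_{\alpha,\delta-2}\qquad\text{for all } u\in C^{2,\alpha}_{\delta}\text{ with }\triangle u=f.$$
The plan is to argue this by contradiction and compactness (a blow-up/rescaling argument in the spirit of Pacard and Bartnik). So I would suppose no such $C$ exists and produce a sequence $u_n\in C^{2,\alpha}_{\delta}$ with $\triangle u_n=f_n$, normalized so that $\|u_n\|_{0,\delta}=1$ while $\|f_n\|_{\alpha,\delta-2}\to 0$.

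The main point is that every norm in sight is exactly invariant under the dilations $D_\lambda$, since $D_\lambda\circ D_\mu=D_{\lambda\mu}$ forces $(f_{\lambda,\gamma})_{\mu,\gamma}=f_{\lambda\mu,\gamma}$, and hence $\|f_{\lambda,\gamma}\|_{\alpha,\gamma}=\|f\|_{\alpha,\gamma}$ (and likewise for $\|\cdot\|_{0,\delta}$). Using $\|u_n\|_{0,\delta}\sim\sup_x r(x)^{-\delta}|u_n(x)|$ I would pick points $x_n$ with $r(x_n)^{-\delta}|u_n(x_n)|\geq 1/2$, set $\lambda_n=r(x_n)/\sqrt2$, and pass to the rescaled functions $v_n=(u_n)_{\lambda_n,\delta}=\lambda_n^{-\delta}(u_n\circ D_{\lambda_n})$. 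By scale invariance $\|v_n\|_{0,\delta}=1$, and since $\triangle v_n=(f_n)_{\lambda_n,\delta-2}$ by the homogeneity relation coming from \ref{lap}, also $\|\triangle v_n\|_{\alpha,\delta-2}=\|f_n\|_{\alpha,\delta-2}\to 0$; moreover the chosen point is moved into the interior of the fixed annulus $A_1$, where $r^{-\delta}|v_n|\geq 1/2$.

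Now I would extract a limit by a diagonal argument over the annuli $A_{2^k}$ exhausting the cone. On each such annulus the bound $\|v_n\|_{0,\delta}=1$ controls $\|v_n\|_{C^0}$, and the interior weighted Schauder estimate \ref{int estimate} (valid because $\alpha<\beta^{-1}-1$, via Proposition \ref{schauder est}) upgrades this to uniform $C^{2,\alpha}$ bounds on $A_{2^k}$, up to and including the singular set $L$. Arzel\`a--Ascoli and a diagonal subsequence then yield $v_n\to v_\infty$ in $C^2_{loc}$ on the cone minus its apex and infinity. The limit satisfies $\triangle v_\infty=0$, inherits the pointwise bound $|v_\infty|\leq r^{\delta}$, hence lies in $C^2_{\delta}$, and is nonzero because the $C^0$ convergence on $\overline{A_1}$ preserves the lower bound $r^{-\delta}|v_\infty|\geq 1/2$ at the limit of the marked points. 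Since $\delta\notin I$, Lemma \ref{inj} forces $v_\infty=0$, a contradiction; this proves the intermediate bound and, with Lemma \ref{Est 1}, the Proposition.

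The step I expect to be the real obstacle is the last one: controlling the two ends of the cone ($r\to0$ and $r\to\infty$) well enough that the blow-up limit is a genuine nonzero element of $C^2_{\delta}$ on the \emph{whole} cone rather than an object that has leaked out through an end. This is precisely where the exact scale invariance of the weighted norms is essential — it guarantees that the global bound $\|v_n\|_{0,\delta}=1$ survives rescaling and transfers to the limit on every annulus simultaneously — and where the condition $\delta\notin I$ must be invoked, since for $\delta$ an indicial root the argument genuinely breaks down (Lemma \ref{inj} fails and the estimate is false). Care is also needed to make the convergence near $L$ strong enough to conclude $v_\infty\neq0$, which is supplied by the uniform $C^{2,\alpha}$ bounds up to the singular set coming from Proposition \ref{schauder est}.
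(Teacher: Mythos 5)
Your proposal is correct and takes essentially the same route as the paper's own proof: a contradiction/rescaling argument based on the exact dilation invariance of the weighted norms, compactness from Lemma \ref{Est 1} plus Arzel\`a--Ascoli on an exhaustion by annuli with a diagonal subsequence, and the Liouville-type Lemma \ref{inj} applied to the nonzero harmonic blow-up limit in $C^2_{\delta}$. The only (immaterial) difference is bookkeeping: you normalize $\|u_n\|_{0,\delta}=1$ and use Lemma \ref{Est 1} to supply the $C^{2,\alpha}$ bounds needed for compactness, whereas the paper normalizes $\|u_k\|_{2,\alpha,\delta}=1$ and uses Lemma \ref{Est 1} to keep $\|u_k\|_{0,\delta}$ bounded away from zero.
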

	
	\begin{proof}
		If the result was not true then we would be able to take a sequence $\{ u_k \}$ with $ \| u_k \|_{2, \alpha, \delta} = 1$ , $ \triangle u_k = f_k$ and $\| f_k \|_{\alpha, \delta -2} \to 0$. It follows from Lemma \ref{Est 1} that $\|u_k\|_{0, \delta} \geq 2 \epsilon$ for some $\epsilon > 0$. Hence we can find $x_k$ such that $ r(x_k)^{-\delta} | u_k (x_k) | \geq \epsilon$. Consider the sequence $\tilde{u_k} = (u_k)_{L_k, \delta}$ where $L_k = r(x_k)$. Write $ x_k = (r(x_k), \theta_k)$,  then $| \tilde{u_k} (\tilde{x_k}) | \geq \epsilon$ with $\tilde{x_k} = (1, \theta_k)$. On the other hand $ \tilde{f_k} = \triangle \tilde{u_k} =  L_k^{-\delta + 2} (f_k)_{L_k} = (f_k)_{L_k, \gamma}$, with $\gamma = \delta -2$. The key point is that $ \| u \|_{2, \alpha, \delta} = \| u_{L, \delta} \|_{2, \alpha, \delta}$ and $ \| f \|_{\alpha, \gamma} = \| f_{L, \gamma} \|_{\alpha, \gamma}$ for any $L >0$ and $f,g$ any functions. So that $ \| \tilde{u_k} \|_{2, \alpha, \delta} = 1$ and $\| \tilde{f_k} \|_{\alpha, \delta -2} \to 0$. Let $K_n = \overline{B_n} \setminus B_{1/n}$ for $n$ an integer $\geq 2$. Arzela-Ascoli and the bound $ \| \tilde{u_k} \|_{2, \alpha, \delta} = 1$ imply that we can take a subsequence $ \tilde{u_k}^{(n)}$ which converges in $C^2(K_n)$ to some function $u_n$ such that $\triangle u_n =0$. The diagonal subsequence $ \tilde{u_n}^{(n)}$ converges to a function $u$ in $ \mathbb{C}^2 \setminus \lbrace 0 \rbrace$ which is in $C^2_{\delta}$ and $\triangle u =0$. Since $| \tilde{u_k} (\tilde{x_k})| \geq \epsilon$ we see that $u \not= 0$, but this contradicts Lemma \ref{inj}
		
	\end{proof}

	In practice we will only use the estimate \ref{Est 2} for functions $u$ with support outside $B_1$. For these functions we can give another equivalent definition of the norms \ref{norm1} and \ref{norm2}. Slightly abusing notation let us set
	\begin{equation} \label{norm4}
	\| f\|_{\alpha, \gamma} = \|f\|_{0, \gamma} + [f]_{\alpha, \gamma - \alpha}
	\end{equation}
	for functions $f$ with supp($f$) $\subset B_1^c$ ,  where
	$$[f]_{\alpha, \gamma - \alpha} = \sup_{x, y} \min \lbrace r(x), r(y) \rbrace^{-\gamma + \alpha} \frac{|f(x) - f(y)|}{d(x, y)^{\alpha}}$$
	when $\gamma <0$. (If $\gamma >0$ we replace $\min \lbrace r(x), r(y) \rbrace$ by  $\max \lbrace r(x), r(y) \rbrace$.)
	
	\begin{claim}
		\ref{norm1} and \ref{norm4} define equivalent norms
	\end{claim}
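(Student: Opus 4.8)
The plan is to compare the two norms piece by piece, treating the $C^0$ parts and the Hölder seminorms separately. The $C^0$ parts are already known to agree up to constants: the text observes that $\|f\|_{0,\gamma}=\sup_\lambda\|f_{\lambda,\gamma}\|_{C^0(A_1)}$ is comparable to $\sup_x r(x)^{-\gamma}|f(x)|$, which is exactly the $C^0$ term in \ref{norm4}. So everything reduces to comparing the Hölder seminorms. The basic tool is the scaling of $D_\lambda$: since $D_\lambda^{*}g_F=\lambda^2 g_F$ we have $r(D_\lambda x)=\lambda\,r(x)$ and $d(D_\lambda x,D_\lambda y)=\lambda\,d(x,y)$, where $d$ is the intrinsic distance of $g_F$.

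First I would record the change-of-variables identity. Writing $x'=D_\lambda x$, $y'=D_\lambda y$ (so $x',y'$ range over $A_\lambda=B_{2\lambda}\setminus B_\lambda$ as $x,y$ range over $A_1$), a direct computation gives
\[
[f_{\lambda,\gamma}]_{\alpha,A_1}=\lambda^{-\gamma+\alpha}\sup_{x',y'\in A_\lambda}\frac{|f(x')-f(y')|}{d(x',y')^{\alpha}} .
\]
Because every point of $A_\lambda$ has $r\in[\lambda,2\lambda]$, the factor $\lambda^{-\gamma+\alpha}$ is comparable to $\min\{r(x'),r(y')\}^{-\gamma+\alpha}$ (and to the corresponding $\max$), uniformly in $\lambda$. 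Hence, taking the supremum over $\lambda$, the Hölder part of the norm \ref{norm1} is comparable to the \emph{local} weighted seminorm obtained from $[\,\cdot\,]_{\alpha,\gamma-\alpha}$ by restricting the supremum to pairs lying in a common annulus. To make the geodesic realizing $d(x',y')$ stay inside one chart I would pass to the fattened annulus $\tilde A_1$, which the text already shows gives an equivalent norm. The easy inequality (the norm \ref{norm1} is dominated by the norm \ref{norm4}) is then immediate, since the local supremum is taken over a subset of the pairs entering $[\,\cdot\,]_{\alpha,\gamma-\alpha}$.

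The substance is the reverse inequality, namely $[f]_{\alpha,\gamma-\alpha}\lesssim[f]^{\mathrm{loc}}+\|f\|_{0,\gamma}$, proved by the standard patching dichotomy. Fix $x,y$, which by the support hypothesis we may take with $r\geq 1$. If $d(x,y)\le\tfrac12\min\{r(x),r(y)\}$, then $r(x)$ and $r(y)$ are comparable, both points lie in a common fattened annulus, and the weighted quotient is controlled by $[f]^{\mathrm{loc}}$. If instead $d(x,y)>\tfrac12\min\{r(x),r(y)\}$, I would estimate crudely by $|f(x)-f(y)|\le\|f\|_{0,\gamma}\bigl(r(x)^\gamma+r(y)^\gamma\bigr)$ together with $d(x,y)^{-\alpha}\le 2^\alpha\min\{r(x),r(y)\}^{-\alpha}$, so that the weighted quotient is bounded by a constant times $\|f\|_{0,\gamma}$ times $\min\{r(x),r(y)\}^{-\gamma}\bigl(r(x)^\gamma+r(y)^\gamma\bigr)$. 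This last factor is where the sign of $\gamma$ enters: for $\gamma<0$ one has $\min\{r(x),r(y)\}^{-\gamma}\bigl(r(x)^\gamma+r(y)^\gamma\bigr)\le 2$, which is precisely why \ref{norm4} uses $\min$; for $\gamma>0$ the same bound holds with $\max$, matching the stated modification of the definition.

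I expect the main obstacle to be the near case of the dichotomy: showing cleanly that two points with comparable $r$ and small separation genuinely sit in one annulus on which the local estimate applies. This requires the overlap built into the continuous family $\{A_\lambda\}_{\lambda>0}$ and the passage to $\tilde A_1$ so that the connecting geodesic does not escape the chart; the support condition is what keeps these annuli away from the accumulation point at $0$. The far case, by contrast, is a purely pointwise computation with the weights, and is where the choice of $\min$ versus $\max$ according to the sign of $\gamma$ becomes forced.
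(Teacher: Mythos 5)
Your proof is correct and follows essentially the same route as the paper's: both reduce to the scaling identity for $D_\lambda$ and then split pairs $x,y$ into a ``far'' case (where $d(x,y)\gtrsim \min\{r(x),r(y)\}$, handled by the weighted $C^0$ norm and the sign of $\gamma$) and a ``near'' case (where $r(x)$ and $r(y)$ are comparable, handled by rescaling both points into a fixed annulus and invoking the H\"older seminorm there). Your dichotomy is phrased in terms of $d(x,y)$ rather than the ratio $r(y)/r(x)$ as in the paper, and you spell out both directions where the paper only details one, but these are presentational differences, not a different argument.
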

	
	\begin{proof}
		We prove that \ref{norm4} is bounded by a constant times \ref{norm1}.  Consider the case of $\gamma <0$. Take $x, y \in \mathbb{C}^2$ with $r(x) \leq r(y)$ such that
		$$ (1/2) [f]_{\alpha, \gamma - \alpha} \leq r(x)^{-\gamma + \alpha} \frac{|f(x) - f(y)|}{d(x,y)^{\alpha}} . $$
		Assume first that $r(y) \geq (5/4) r(x)$, say.  Then $d(x,y) \geq d(y,0) - d(x,0) \geq (1/4) r(x)$,  so that
		$$ (1/2) [f]_{\alpha, \gamma - \alpha} \leq r(x)^{-\gamma} |f(x)| + r(x)^{-\gamma} |f(y)| . $$
		When $\gamma < 0$,   $ r(x)^{-\gamma} |f(y)| \leq  r(y)^{-\gamma} |f(y)|$ and this last term is bounded by \ref{norm1}. When $ r(y) \leq (5/4) r(x)$ we write $ x=(r(x), \theta)$ and $y=(r(y), \psi)$. Let $\tilde{x} = (3/2, \theta)$ and $\tilde{y} = (\frac{3r(y)}{2r(x)}, \psi)$. Set $\lambda = (2/3)r(x)$ so that $D_{\lambda} (\tilde{x}) = x $ and  $D_{\lambda} (\tilde{y}) = y $. Note that $\tilde{x}, \tilde{y} \in A_1$ ($r(\tilde{y}) \leq 15/8 <2$), so that \ref{norm1} gives us a bound for
		$$ \lambda^{-\gamma} \frac{|f(x) - f(y)|}{d(\tilde{x}, \tilde{y})^{\alpha}} = (2/3)^{-\gamma} r(x)^{-\gamma + \alpha} \frac{|f(x)-f(y)|}{d(x,y)^{\alpha}} . $$ 
		From this we get  that \ref{norm4} is bounded by a constant times \ref{norm1}. The reverse inequality follows similarly.
	\end{proof}

	Finally let us point out that $(-2, 0) \cap I= \phi$ independently of $\overline{g}$. In fact, for this range one can give an alternative proof of \ref{Est 2} which does not invoke the spectrum of $\triangle_{\overline{g}}$.

	\begin{lemma} \label{DEC}
		Let $u \in C^2$   with supp($u$) $\subset B_1^c$. Assume $\triangle u = f \in C^0_{\delta-2}$ for some $\delta \in (-2, 0)$ and that $u \in C^0_{\mu}$ for some $\mu <0$. Then
		$$ \| u \|_{0, \delta} \leq c_{\delta} \| f \|_{0, \delta - 2}$$
		with $c_{\delta} = - (\delta +2)^{-1} \delta^{-1}$.	
	\end{lemma}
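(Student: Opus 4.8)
My plan is to prove the estimate by a maximum-principle (barrier) argument, using the radial function $r^{\delta}$ as a comparison function. The starting observation is that, from the expression \ref{lap} for the Laplacian, a radial power behaves as
\[
\triangle r^{\delta} = \left( \delta(\delta-1) + 3\delta \right) r^{\delta-2} = \delta(\delta+2)\, r^{\delta-2},
\]
and that for $\delta \in (-2,0)$ one has $\delta < 0$ and $\delta+2 > 0$, so $\delta(\delta+2) < 0$. Writing $M = \|f\|_{0,\delta-2}$, so that $|f(x)| \le M\, r(x)^{\delta-2}$, I set $\phi = c_{\delta} M\, r^{\delta}$ with $c_{\delta} = -(\delta+2)^{-1}\delta^{-1} > 0$. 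The choice of constant is arranged precisely so that $\triangle \phi = c_{\delta} M\, \delta(\delta+2) r^{\delta-2} = -M\, r^{\delta-2}$. The claim $\|u\|_{0,\delta} \le c_{\delta} M$ then amounts to the two-sided bound $-\phi \le u \le \phi$.

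To get the upper bound I would consider $v = \phi - u$. Since $f \ge -M r^{\delta-2}$, we have $\triangle v = -M r^{\delta-2} - f \le 0$, so $v$ is superharmonic; the lower bound follows symmetrically from $\tilde v = \phi + u$, which satisfies $\triangle \tilde v = -Mr^{\delta-2} + f \le 0$ because $f \le M r^{\delta-2}$. I would then apply the minimum principle on the annuli $\{1 \le r \le R\}$. On the inner sphere $\{r=1\}$ the support condition gives $u = 0$, so $v = \phi = c_{\delta}M > 0$; on the outer sphere $\{r = R\}$ the hypothesis $u \in C^0_{\mu}$ provides a constant $A$ with $|u| \le A r^{\mu}$, whence $v \ge c_{\delta}M R^{\delta} - A R^{\mu}$. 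For a fixed point $x_0$ the minimum principle yields $v(x_0) \ge \min\{\,c_{\delta}M,\; c_{\delta}M R^{\delta} - A R^{\mu}\,\}$ for every large $R$. Here both $\delta < 0$ and $\mu < 0$ enter: each of $R^{\delta}$ and $R^{\mu}$ tends to $0$, so letting $R \to \infty$ forces $v(x_0) \ge 0$, i.e. $u(x_0) \le \phi(x_0)$. (On $\{r \le 1\}$ the bound is trivial since $u = 0$ there.) Combining the two sides gives $|u| \le c_{\delta} M\, r^{\delta}$ and hence the assertion.

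The step I expect to require the most care is the validity of the minimum principle for the cone Laplacian across the singular set $L$. On each annulus the function $v$ is smooth and classically superharmonic away from $L$, and it is continuous up to $L$; since $L$ meets the annulus in a set of real codimension two (a union of pieces of complex lines), it has zero capacity, so a continuous function that is superharmonic off $L$ is superharmonic across it, and the comparison principle applies unchanged. I would state this removability explicitly, together with the elementary exterior-domain limit above, and those two points are the only places where anything beyond the verbatim barrier computation is needed; the remaining manipulations are routine.
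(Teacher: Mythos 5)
Your proof is correct, and it is essentially the paper's proof: you use the very same barrier (the paper takes $A\,r^{\delta}$ with $A=\|f\|_{0,\delta-2}/Q_{\delta}$, $Q_{\delta}=-\delta(\delta+2)$, which is exactly your $c_{\delta}M\,r^{\delta}$), the same comparison on the annuli $B_R\setminus B_1$, the same use of $u\in C^0_{\mu}$ to control the outer sphere, and the same limit $R\to\infty$; the paper merely subtracts $m_R=\sup_{\partial B_R}u$ from its comparison function instead of carrying the term $AR^{\mu}$ inside the minimum, which is cosmetic. The one genuine point of divergence is the step you yourself flagged: how to run the maximum principle across the singular set $L$. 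You invoke removability of $L$ as a set of zero capacity; this is correct in spirit, but for the cone Laplacian it is not quite off-the-shelf -- one should say that in the flattened coordinates $(r_1e^{i\theta_1},z_2)$ the metric $g_F$ is uniformly equivalent to the Euclidean metric, so $L$ is polar for the associated divergence-form operator, and that your comparison functions are continuous and locally in $L^2_1$, so that weak superharmonicity extends across $L$ and the weak comparison principle applies. The paper avoids this machinery with a more elementary, self-contained trick (stated as a remark immediately after the lemma): perturb $h$ by $\delta|l|^{2\epsilon}$ with $\epsilon<\beta$, which is plurisubharmonic (so subharmonicity is preserved) and whose cone gradient blows up along $L$ faster than a cone-$C^1$ (or $C^{\alpha}$, taking $\epsilon<\alpha\beta$) function can offset, thereby pushing any interior maximum off $L$ and reducing to the classical maximum principle. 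Both routes are valid; the paper's buys self-containedness, yours buys generality at the cost of citing (or reproving) potential theory for the singular metric.
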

	
	\begin{proof}
		From \ref{lap} we have that $\triangle r^{{\delta}} = ({\delta} + 2){\delta}  r^{{\delta}-2}= -Q_{{\delta}} r^{{\delta}-2}$ with $ Q_{{\delta}} = - ({\delta} + 2) {\delta}  >0$. On $U_R = B_R \setminus B_1$  consider the function $ h = u - A r^{{\delta}} - m_R$ where $m_R = \sup_{\partial B_R} u$ and $ A= \|f\|_{0, \delta -2} / Q_{{\delta}}$. Then
		$$ \triangle h = f + \|f\|_{0, \delta -2} r^{{\delta}-2}  \geq 0 . $$
		$h \leq 0 $ on $\partial B_1$ since $u$ has support outside $B_1$. By our choice of $m_R$,  $h \leq 0$ on $\partial B_R$ . The maximum principle implies that $h\leq 0$ in $U_R$, i.e. for every $ x \in U_R$ we have that 
		$$u(x) \leq (\|f\|_{0, \delta -2}/ Q_{{\delta}}) r(x)^{{\delta}} + m_R . $$
		Since $u \in C^0_{\mu}$ for some $\mu <0$ we get that $\lim_{R\to \infty} m_R=0$. We let $R \to \infty$ and get the desired upper bound on $u$. The lower bound, and hence the lemma,  follows by applying the upper bound to $-u$.
	\end{proof}
	
	We explain the use of the maximum principle in the context of metrics with cone singularities -see also \cite{Jeffres}-. Let $ A = B_{R_2} \setminus B_{R_1} \subset \mathbb{C}^2 \setminus \lbrace 0 \rbrace$. Let $ h \in C^2(A)$ be such that $\triangle h \geq 0$ and $h|_{\partial A} \leq 0$. We claim that $h \leq 0$ on $A$, if this was not the case we can find $p \in A$ such that $h(p) = \sup_{A} h = 2m >0$. If $p \notin  L$ this would contradict the usual maximum principle. Then $ p \in L$. Let $\epsilon < \beta$ and $\delta$ be small enough such that $ \delta | P_d |^{2\epsilon} \leq m $  on $\partial A$. Consider the function $ H= h + \delta | P_d|^{ 2 \epsilon}$. By our choices $H$ has a local maximum at some point $q \in A$. Since $i \partial \overline{\partial} |P_d|^{2\epsilon} \geq 0$ we still have $\triangle H \geq 0$. Since $ \epsilon < \beta$ and $h$ is a $C^1$ function, we have that $ q \notin L$, contradicting the usual maximum principle. In fact this argument can be adapted to other situations. For example the same holds if $h$ is  $C^{\alpha}$, smooth outside $L$ with $\triangle h \geq 0$ (one then needs to take $\epsilon < \alpha \beta$).

	\subsection{Main result} \label{mr}
	
	We study the mapping properties  of the Laplacian of the reference metric $\omega$ of Lemma \ref{AM} acting on weighted spaces.

	We define our weighted H\"older spaces. The notation is the one of Subsection \ref{D}. Fix  $N$ large enough such that $ C \cap B_N^c \subset U_{2R, \delta/2} $. Let $\chi$ be a smooth function equal to $1$ on $B_{N+1}^c$  which vanishes on $B_N$.   For a function $u: \mathbb{C}^2 \to \mathbb{R}$ we write $u_{\infty} = \chi u \circ G$. We change notation and introduce a $'$ on the norms of the previous subsection. The space $C^{2, \alpha}_{\delta}$ ($C^{\alpha}_{\gamma}$) is defined to be the set of functions $u$ ($f$) such that the norm
	
	\begin{equation}
	\| u \|_{2, \alpha, \delta} = \| u \|_{C^{2, \alpha}(B_{N+1})} + \| u_{\infty}\|_{2, \alpha, \delta}'
	\end{equation}
	
	\begin{equation}
	\| f \|_{\alpha, \gamma} = \| f \|_{C^{\alpha}(B_{N+1})} + \| f_{\infty}\|_{\alpha, \gamma}'
	\end{equation}
	is finite. These are Banach spaces.
	
	Write $\triangle$ for the Laplacian of $\omega$. We apply our previous estimates to get the following
	
	\begin{corollary}
		Let $\delta \notin I$ and $\alpha < \beta^{-1} -1$. Then there exist a compact set $K$ and a constant $C$ such that for all $u \in C^{2, \alpha}_{\delta}$ with $\triangle u = f$ we have
		\begin{equation} \label{ESTIMATE}
		\|u\|_{2, \alpha, \delta} \leq C \left( \|u \|_{C^0(K)} + \| f \|_{\alpha, \delta -2} \right) . 
		\end{equation}
	\end{corollary}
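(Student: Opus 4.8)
The goal is to upgrade the global estimate \ref{Est 2} on $\mathbb{C}^2 \setminus \{0\}$ (for the flat laplacian $\triangle_{g_F}$) to the analogous estimate \ref{ESTIMATE} for the laplacian $\triangle$ of the metric $\omega$, which agrees with $G^*\omega_F$ only outside a compact set and has cone singularities along all of $C$ (not just along $L$). The natural approach is a patching argument: control $u$ near infinity by transplanting the flat estimate via $G$, control $u$ on a large compact set by the local Schauder estimate \ref{LAP} for metrics with cone singularities, and glue the two with cutoff functions.

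First I would set up the cutoff. Recall $u_\infty = \chi\, u\circ G$, where $\chi$ vanishes on $B_N$ and equals $1$ on $B_{N+1}^c$; on the region where $\chi\equiv 1$ the metric $\omega$ pulls back under $G$ to the flat $\omega_F$, so $\triangle_{g_F}(u\circ G) = (\triangle u)\circ G = f\circ G$ there. Applying $\triangle_{g_F}$ to $u_\infty$ produces $\chi\,(f\circ G)$ plus commutator terms $(\triangle_{g_F}\chi)(u\circ G) + 2\langle \nabla\chi,\nabla(u\circ G)\rangle$, all supported in the fixed annulus $B_{N+1}\setminus B_N$. Since $\delta\notin I$, the flat estimate \ref{Est 2} applied to $u_\infty\in C^{2,\alpha}_\delta$ gives
\begin{equation}
\|u_\infty\|'_{2,\alpha,\delta} \le C\,\|\triangle_{g_F} u_\infty\|'_{\alpha,\delta-2} \le C\big(\|f_\infty\|'_{\alpha,\delta-2} + \|u\circ G\|_{C^{2,\alpha}(B_{N+1}\setminus B_N)}\big),
\end{equation}
where the commutator contributions, being supported in a fixed annulus bounded away from $0$, are absorbed into a $C^{2,\alpha}$ (hence by interpolation a $C^0$-plus-$f$) bound on $u$ over a fixed compact collar. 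Here I must be slightly careful that $\chi u\circ G$ genuinely lies in the flat weighted space, but this is immediate from the decay built into the definition of $\|u\|_{2,\alpha,\delta}$ together with $G$ being asymptotic to the identity.

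Second, for the compact piece I would apply the interior estimate \ref{LAP} for metrics with cone singularities, with $U = B_{N+1}$ compactly contained in some slightly larger $V = B_{N+2}$, to obtain
\begin{equation}
\|u\|_{C^{2,\alpha}(B_{N+1})} \le C\big(\|\triangle u\|_{C^\alpha(B_{N+2})} + \|u\|_{C^0(B_{N+2})}\big) \le C\big(\|f\|_{\alpha,\delta-2} + \|u\|_{C^0(B_{N+2})}\big).
\end{equation}
Adding the two displays yields a bound on $\|u\|_{2,\alpha,\delta} = \|u\|_{C^{2,\alpha}(B_{N+1})} + \|u_\infty\|'_{2,\alpha,\delta}$ by $C(\|f\|_{\alpha,\delta-2} + \|u\|_{C^0(K)})$ with $K = B_{N+2}$, which is exactly \ref{ESTIMATE}.

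The main obstacle is the bookkeeping at the overlap, i.e. honestly showing that the fixed-annulus commutator terms from the cutoff can be controlled by $\|u\|_{C^0(K)}$ and $\|f\|_{\alpha,\delta-2}$ rather than by a full $C^{2,\alpha}$ norm of $u$ (which would be circular). The standard device is to run the two estimates over nested annuli/balls and use an interpolation inequality to trade the intermediate-order terms $\|\nabla(u\circ G)\|$ against a small multiple of the top-order norm plus a large multiple of $\|u\|_{C^0}$; the small top-order piece is then absorbed into the left-hand side. One must also check that $\triangle_{g_F}$ and $\triangle$ really do coincide under $G$ on the support of $\chi$, which follows since $G$ is holomorphic there and $\omega = \eta = H^*\omega_F$ outside the compact set fixed in Lemma \ref{AM}, so that $\triangle$ is genuinely the pulled-back flat laplacian where it matters.
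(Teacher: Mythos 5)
There is a genuine gap, and it sits exactly at the point you flagged as needing a check. Your argument rests on the claim that on the support of $\chi$ one has $\omega = \eta = H^{*}\omega_F$, so that $\triangle_{g_F}(u \circ G) = f \circ G$ there. This is false. By Lemma \ref{AM} we do have $\omega = \eta$ outside a compact set, but $\eta = \frac{i}{2}\partial\overline{\partial}(r^2 \circ H)$ equals $H^{*}\omega_F$ only where $H$ is holomorphic, i.e.\ in the conical sectors $U_{\delta/2, 2R}$ around the asymptotic lines (and trivially where $H$ is the identity, i.e.\ outside $U_{\delta, R}$). In the transition region $U_{\delta, R} \setminus U_{\delta/2, 2R}$, which is an \emph{unbounded} conical sector, $H$ is neither holomorphic nor the identity, and there $G^{*}g$ and $g_F$ agree only asymptotically: $|G^{*}g - g_F|_{g_F} = O(r^{-2/c})$ (Lemma \ref{eta} and the Remark after it). Consequently $\triangle_{g_F}u_\infty$ is not $\chi(f\circ G)$ plus commutator terms supported in the fixed annulus $B_{N+1}\setminus B_N$: it contains an additional error $(\triangle_{g_F} - \triangle_{G^{*}g})(u\circ G)$ spread over an unbounded region, which cannot be traded against $\|u\|_{C^0(K)}$ and $\|f\|_{\alpha,\delta-2}$ by any interpolation on compact sets — a priori it is only of the same size as $\|u_\infty\|'_{2,\alpha,\delta}$ itself, so your first display is circular as written.

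The missing ingredient is precisely the key point of the paper's proof: the operator perturbation estimate \ref{ASYL}, namely that for $v$ supported in $B_L^c$ one has $\| \triangle_{G^{*}g} v - \triangle_F v \|'_{\alpha, \delta -2} \leq c_L \| v \|'_{2, \alpha, \delta}$ with $c_L \to 0$ as $L \to \infty$ (this uses exact equality $G^{*}g = g_F$ in a cone $U_{\delta', R'}$ around the singular set, where a perturbation argument is unavailable because of the cone singularities, and the $O(r^{\mu})$ decay with derivatives on the complement, where the metrics are smooth). One then applies the flat estimate \ref{Est 2} to $u_\infty$, replaces $\triangle_F u_\infty$ by $\triangle_{G^{*}g}u_\infty$ at the cost of $C c_L \|u_\infty\|'_{2,\alpha,\delta}$, and \emph{absorbs} this term into the left-hand side by taking $L$ large; the compact set $K$ in the statement is exactly the outcome of this choice of $L$. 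Your proposal fixes $K = B_{N+2}$ in advance, which is the symptom of having skipped the absorption step; the compact-piece estimate via \ref{LAP} and the handling of the cutoff commutators are fine, but without \ref{ASYL} the estimate at infinity does not close.
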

	
	\begin{proof}
		The key point is that if $v \in (C^{2, \alpha}_{\delta})'$ is supported in $B_L^c$ then 
		\begin{equation} \label{ASYL}
		\| \triangle_{G^{*}g} v - \triangle_F v \|'_{\alpha, \delta -2} \leq c_L \| v \|'_{2, \alpha, \delta}
		\end{equation}
		with $c_L \to 0$ as $L \to \infty$, where $g$ is the metric corresponding to $\omega$ and $\triangle_F$ is the Laplacian of the flat metric. Since $G^{*} g = g_F$ in a region $U_{\delta' , R'}$ and $ |G^{*} g - g_F|_{g_F}= O(r^{\mu})$ for some $\mu < 0$ with derivatives on the complement of  $U_{\delta' , R'}$, \ref{ASYL} holds. The  corollary then follows from \ref{Est 2} and the interior estimates.
	\end{proof}

	\begin{lemma} \label{fred}
		$\triangle : C^{2, \alpha}_{\delta} \to C^{\alpha}_{\delta-2}$ has finite dimensional kernel for any $\delta$ and closed image when $\delta \notin I$.
	\end{lemma}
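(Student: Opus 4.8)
The plan is to recognize this as the standard \emph{semi-Fredholm} statement extracted from the a priori estimate \ref{ESTIMATE}: for $\delta \notin I$ there are a compact $K$ and a constant $C$ with $\|u\|_{2,\alpha,\delta} \leq C(\|u\|_{C^0(K)} + \|\triangle u\|_{\alpha,\delta-2})$ for all $u \in C^{2,\alpha}_\delta$. The only extra ingredient I need is that the restriction map $R: C^{2,\alpha}_\delta \to C^0(K)$, $Ru = u|_K$, is compact. This is where the analytic work lies: a bounded set in $C^{2,\alpha}_\delta$ restricts to a set that is bounded in the local conical $C^{2,\alpha}$ norm on a fixed neighbourhood of $K$, hence uniformly bounded and equicontinuous there (the cone $C^{2,\alpha}$ norm controls a modulus of continuity all the way up to the singular set $C$), so by Arzel\`a--Ascoli every sequence has a $C^0(K)$-convergent subsequence. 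Verifying this equicontinuity cleanly across the cone singularities is the main technical obstacle; once it is granted, the remainder of the argument is abstract functional analysis.

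I would treat the finite-dimensional kernel first. For $\delta \notin I$, on $\ker\triangle$ the estimate reduces to $\|u\|_{2,\alpha,\delta} \leq C\|Ru\|_{C^0(K)}$. Given a bounded sequence $\{u_n\}$ in $\ker\triangle$, compactness of $R$ yields a subsequence with $Ru_n$ Cauchy in $C^0(K)$; applying the estimate to the differences $u_n-u_m$ (still in the kernel) shows $\{u_n\}$ is Cauchy in $C^{2,\alpha}_\delta$. Thus the unit ball of the closed subspace $\ker\triangle$ is precompact, and by the Riesz lemma $\ker\triangle$ is finite-dimensional. To handle an \emph{arbitrary} $\delta$, I would choose $\delta'\notin I$ with $\delta'>\delta$ and $(\delta,\delta']\cap I=\emptyset$, which is possible since $I$ is discrete. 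Because a function that is $O(r^\delta)$ at infinity is a fortiori $O(r^{\delta'})$ (with the derivative components scaling as $O(r^{\delta'-1})$ and $O(r^{\delta'-2})$), the inclusion $C^{2,\alpha}_\delta \hookrightarrow C^{2,\alpha}_{\delta'}$ is bounded, so $\ker(\triangle|_{C^{2,\alpha}_\delta}) \subseteq \ker(\triangle|_{C^{2,\alpha}_{\delta'}})$. The latter is finite-dimensional by the previous step, hence so is the former.

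For the closed image I would fix $\delta\notin I$ and write $X=C^{2,\alpha}_\delta$, $Y=C^\alpha_{\delta-2}$, $T=\triangle$. Since $\ker T$ is finite-dimensional it has a closed complement $X_0$, giving $X=\ker T\oplus X_0$. The key claim is that there is a constant $C'$ with $\|u\|_X \leq C'\|Tu\|_Y$ for all $u\in X_0$. If this failed I could pick $u_n\in X_0$ with $\|u_n\|_X=1$ and $\|Tu_n\|_Y\to 0$; compactness of $R$ gives a subsequence with $Ru_n$ Cauchy, and then \ref{ESTIMATE} applied to $u_n-u_m$ shows $u_n\to u$ in $X$. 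This limit satisfies $u\in X_0$ (closed), $\|u\|_X=1$, and $Tu=\lim Tu_n=0$, so $u\in\ker T\cap X_0=\{0\}$, a contradiction. With the estimate $\|u\|_X\leq C'\|Tu\|_Y$ on $X_0$ in hand, closedness of $T(X)=T(X_0)$ is immediate: if $Tu_n\to y$ with $u_n\in X_0$, then $\{Tu_n\}$ is Cauchy, so by the estimate $\{u_n\}$ is Cauchy in $X_0$ with some limit $u$, whence $Tu=y\in T(X)$. Thus the image is closed whenever $\delta\notin I$, completing the proof.
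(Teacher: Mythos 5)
Your proposal is correct and follows essentially the same route as the paper: the a priori estimate \ref{ESTIMATE} combined with Arzel\`a--Ascoli compactness of restriction to a compact set yields finite-dimensionality of the kernel (with the case $\delta \in I$ handled by the bounded inclusion $C^{2,\alpha}_{\delta} \subset C^{2,\alpha}_{\tilde{\delta}}$ for $\tilde{\delta} > \delta$, $\tilde{\delta} \notin I$), and closedness of the image follows from the uniform estimate $\|u\|_{2,\alpha,\delta} \leq C\|\triangle u\|_{\alpha,\delta-2}$ on a closed complement of the kernel, proved by the same compactness-contradiction argument. The only differences are presentational: you isolate compactness of the restriction map and invoke the Riesz lemma explicitly, where the paper simply cites Arzel\`a--Ascoli.
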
 
	
	\begin{proof}
		Let us start by proving the statement about the kernel. Assume first that $\delta \notin I$ and let $u_k \in C^{2, \alpha}_{\delta}$ with $\triangle u_k = 0$ and $\| u_k \|_{2, \alpha, \delta} = 1$. By Arzela-Ascoli we can take a subsequence which converges in $C^0(K)$ to some function.  We apply the estimate \ref{ESTIMATE} to conclude that the subsequence is Cauchy in $C^{2, \alpha}_{\delta}$ and hence ker($\triangle$) is finite dimensional. In the case that $\delta \in I$ just take $\tilde{\delta} > \delta$, $\tilde{\delta} \notin I$ and note that $ C^{2, \alpha}_{\delta} \subset C^{2, \alpha}_{\tilde{\delta}}$.
		
		To prove that the image is closed let us write $ C^{2, \alpha}_{\delta} = V \oplus \mbox{ker}(\triangle)$ for some closed subspace $V$. We claim that there exists a constant $C$ such that $ \| u \|_{2, \alpha, \delta} \leq C \| f \|_{\alpha, \delta -2}$ for every $u \in V$. If this was not true then we would get a sequence such that $\| u_k \|_{2, \alpha, \delta} =1$ and $\|f_k \|_{\alpha, \delta -2} \to 0$. It follows from Arzela-Ascoli and \ref{ESTIMATE} that, after taking a subsequence, we can assume that $u_k$ converges in $C^{2, \alpha}_{\delta}$ to some function $u$ with $\triangle u =0$. Since $u \in V$ then $u=0$ and this contradicts $\|u_k\|_{2, \alpha, \delta} =1$. Finally let $ f_k = \triangle u_k $ with $ f_k \to f$ in $C^{\alpha}_{\delta-2}$. We can assume that $u_k \in V$. The estimate we just proved implies that $ \lbrace u_k \rbrace$ is Cauchy and converges to some $u \in C^{2, \alpha}_{\delta}$ with $\triangle u = f$.
		
	\end{proof} 
	
	Let $\mathcal{H}$ be the the completion of the space of compactly supported functions $\phi$, smooth in the cone coordinates,  under the Dirichlet norm $\int_{\mathbb{C}^2} |\nabla^{\omega} \phi |^2 \omega^2$. The Sobolev inequality -see Subsection \ref{SOBOLEV}- tells us that  there exists $C>0$ such that
		\begin{equation} \label{sob ineq}
		\left( \int |\phi|^4 \omega^2 \right)^{1/2} \leq C \int |\nabla^{\omega} \phi |^2 \omega^2
		\end{equation}
	for every $\phi \in \mathcal{H}$.

	Let $f \in L^{4/3}$. It follows from \ref{sob ineq} that $T_f (\phi) = \int f \phi$ defines a bounded functional on $\mathcal{H}$. A weak solution of $\triangle u = f$ is a function $ u \in \mathcal{H}$ such that $ - \int \langle \nabla u, \nabla \phi \rangle = \int f \phi$ for every $\phi \in \mathcal{H}$. It follows from \cite{Donaldson} that if $f$ is locally in $C^{\alpha}$ then $u$ is locally in  $C^{2, \alpha}$.
	
	\begin{lemma} \label{dense}
		Let $f \in C^{\alpha}_{c}$ and $u \in \mathcal{H}$ be a weak solution of $\triangle u = f$. Then $u \in C^{2, \alpha}_{\delta}$ for any $\delta > -2$
	\end{lemma}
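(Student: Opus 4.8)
First I would record the local picture and isolate where the work lies. Since $u \in \mathcal{H}$ is a weak solution of $\triangle u = f$ with $f \in C^\alpha_c$, restricting test functions to compactly supported ones shows $u$ is a local $L^2_1$ weak solution, so Proposition \ref{schauder est} (together with the standard Schauder estimate away from the curve) gives $u \in C^{2,\alpha}$ locally. Moreover $f$ has compact support, so $\triangle u = 0$ outside some ball $B_L$. The entire difficulty is therefore the behaviour at infinity: I must produce decay of $u$ and then feed it into the weighted theory.

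Second, the \emph{initial decay}. By the Sobolev inequality \ref{sob ineq}, finiteness of $\int |\nabla u|^2$ forces $u \in L^4$. I would exploit asymptotic conicality by rescaling: on $A_\lambda = B_{2\lambda}\setminus B_\lambda$ set $\tilde u_\lambda = u\circ G\circ D_\lambda$ on $A_1$, which satisfies $\triangle_{g_\lambda}\tilde u_\lambda = 0$ for large $\lambda$, where $g_\lambda = \lambda^{-2}D_\lambda^*(G^*\omega)$. Because $|G^*\omega - \omega_F|_{g_F} = O(r^\mu)$ with $\mu<0$ (and control on derivatives), the metrics $g_\lambda$ converge to $\omega_F$ on $A_1$ and are uniformly quasi-isometric to it. Hence the De Giorgi--Nash--Moser $L^4$ local boundedness estimate, valid since $\omega_F$ is quasi-isometric to the euclidean metric, holds with a constant independent of $\lambda$, giving $\sup_{A_1}|\tilde u_\lambda| \le C\,\|\tilde u_\lambda\|_{L^4(\tilde A_1)}$. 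Tracking the scaling $D_\lambda^*\omega_F^2 = \lambda^4\omega_F^2$ converts this into $\sup_{A_\lambda}|u| \le C\lambda^{-1}\big(\int_{r>\lambda/2}|u|^4\big)^{1/4}$, and since the tail of the $L^4$ integral is finite (indeed tends to $0$) this yields $u \in C^0_{\mu_0}$ for some $\mu_0 < 0$. This rescaled Moser step, and in particular the uniformity of the elliptic constants forced by asymptotic conicality, is the main obstacle.

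Third, I would sharpen the decay by the maximum principle, exactly as in Lemma \ref{DEC}. Fix any $\delta \in (-2,0)$; since $(-2,0)\cap I = \emptyset$ we have $\delta\notin I$. Working on the end (pulled back by $G$, where the comparison $\triangle r^\delta = (\delta+2)\delta\, r^{\delta-2}$ holds up to the lower-order error produced by $|G^*\omega - \omega_F| = O(r^\mu)$), the barrier $A r^\delta + m_R$ together with the initial decay $u\in C^0_{\mu_0}$, which forces $m_R = \sup_{\partial B_R} u \to 0$, gives $\|u\|_{0,\delta} \le c_\delta\|f\|_{0,\delta-2}$. As $f$ is compactly supported, $\|f\|_{0,\delta-2}<\infty$ for every such $\delta$, so $u \in C^0_\delta$ for all $\delta\in(-2,0)$; since the slower-weight spaces contain the faster ones at infinity, this in fact gives $u\in C^0_\delta$ for every $\delta > -2$.

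Finally I would upgrade $C^0$ decay to $C^{2,\alpha}$ decay. With $u$ locally $C^{2,\alpha}$, with $\|u\|_{0,\delta}$ finite for $\delta > -2$, and with $\triangle u = f \in C^\alpha_{\delta-2}$ (again because $f$ is compactly supported), the weighted interior estimate of Corollary \ref{ESTIMATE} for the metric $\omega$ gives $\|u\|_{2,\alpha,\delta} \le C\big(\|u\|_{C^0(K)} + \|f\|_{\alpha,\delta-2}\big) < \infty$. Hence $u \in C^{2,\alpha}_\delta$ for every $\delta > -2$, as claimed.
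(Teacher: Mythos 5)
Your proof is correct, but the key step differs genuinely from the paper's. The paper obtains the initial decay much more softly: it sets $\|u\|_{L^2_\delta}^2=\int|u|^2\rho^{-2\delta-4}$, bounds this via H\"older by $\left(\int|u|^4\right)^{1/2}\left(\int\rho^{-4(\delta+2)}\right)^{1/2}$, which is finite for $\delta>-1$ since $u\in L^4$ by the Sobolev inequality \ref{sob ineq}, and then invokes the interior Schauder estimates with the $L^2$ norm in place of the $C^0$ norm on the right-hand side to conclude $u\in C^{2,\alpha}_\delta$ for all $\delta>-1$; Lemma \ref{DEC} then pushes the decay to any $\delta>-2$, exactly as in your third step. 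You instead extract the initial decay by a rescaled De Giorgi--Nash--Moser sup bound on annuli, using the uniform quasi-isometry of the rescaled metrics $g_\lambda$ with $\omega_F$ (hence, in cone charts, with the euclidean metric) to keep the elliptic constants $\lambda$-independent. Both routes work: the paper's is shorter because it recycles the weighted Schauder machinery already established and needs no uniformity check, while yours yields the marginally stronger initial decay $o(r^{-1})$ rather than $O(r^{-1+\epsilon})$ --- an advantage immediately erased by the barrier step. Two points to tighten. First, in the barrier argument for the asymptotically conical metric the inequality $\|u\|_{0,\delta}\le c_\delta\|f\|_{0,\delta-2}$ should pick up a term of the form $L^{-\delta}\sup_{\partial B_L}|u|$ from the inner boundary, since here $u$ is not supported outside $B_L$ as it is in Lemma \ref{DEC}; this does not affect the conclusion $u\in C^0_\delta$. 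Second, the a priori estimate \ref{ESTIMATE} is stated for $u$ already belonging to $C^{2,\alpha}_\delta$, so citing it to conclude membership in that space is formally circular; the correct tool is the Remark following Lemma \ref{Est 1} (the interior estimates on annuli require only that $u$ be locally $C^{2,\alpha}$ with $\|u\|_{0,\delta}$ finite), which is precisely what the paper itself uses, in its $L^2_\delta$ incarnation.
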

	
	\begin{proof}
		Take $ \psi = \psi(t)$ to be a smooth non-decreasing function of one real variable with $\psi(t) = t$ when $t \geq 2$ and $\psi(t) =1$ when $t \leq 1$. Define $\rho = \psi \circ r$ and let
		$$ \| u \|^2_{L^2_{\delta}} = \int |u|^2 \rho^{-2\delta} \rho^{-4} . $$
		Since $u \in \mathcal{H}$ we get that $ \int |u|^4$ is finite (in fact it is bounded by $\|f\|_{L^{4/3}}$). From H\"older's inequality we have that 
		$$ \| u \|_{L^2_{\delta}}^2 \leq  \left( \int |u|^4 \right)^{1/2} \left( \int \rho^{-4(\delta + 2)} \right)^{1/2}.$$
		If $\delta > -1$ we conclude that $ \| u \|_{L^2_{\delta}}$ is finite.
		
		In the interior Schauder estimates one can replace the $C^0$ norm in the r.h.s with the $L^2$ norm -see the proof of Proposition 2.4 in \cite{Brendle} and Theorem 8.17 in \cite{GilbargTrudinger}-. Using the interior estimates in this form one gets that if $u$ is locally in $C^{2, \alpha}$ and $ \|u\|_{L^2_{\delta}}$ is finite, then $u \in C^{2, \alpha}_{\delta}$ and
		
		$$ \| u \|_{2, \alpha, \delta} \leq C \left( \| f \|_{\alpha, \delta-2} + \| u \|_{L^2_{\delta}} \right) . $$
		Hence $u \in C^0_{\delta}$ for any $\delta > -1$. One can then use Lemma \ref{DEC} to show that in fact this is true for any $\delta > -2$.
		
	\end{proof}
	
	\begin{proposition} \label{LTHRM}
		$\triangle : C^{2, \alpha}_{\delta} \to C^{\alpha}_{\delta-2}$ is an isomorphism when $\delta \in (-2, 0)$ and is surjective when $\delta \in (0, 2) \setminus I$.
	\end{proposition}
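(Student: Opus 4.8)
```latex
The plan is to prove the two claims separately, in each case combining the
Fredholm-type results already established (finite-dimensional kernel and closed
image from Lemma \ref{fred}) with the explicit injectivity statement of Lemma
\ref{inj} and the surjectivity information coming from the Dirichlet space
$\mathcal{H}$ via Lemma \ref{dense}.

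First I would treat the range $\delta \in (-2,0)$. For injectivity, suppose
$u \in C^{2,\alpha}_{\delta}$ with $\triangle u = 0$. Since $\delta \notin I$ (recall
from the end of Subsection \ref{wh} that $(-2,0)\cap I = \phi$) and $u$ decays like
$r^{\delta}$ near infinity, the natural idea is to reduce to the model situation
of Lemma \ref{inj}: the metric $\omega$ agrees with $\omega_F$ outside a compact
set after applying $G$, so a harmonic function decaying faster than $r^0$ and
with no growth forced at the origin must vanish. More carefully, I would use the
maximum-principle argument spelled out at the end of Subsection \ref{wh}
(adapted across the cone singularity by adding $\delta|l|^{2\epsilon}$ with
$\epsilon<\alpha\beta$) to conclude that a bounded-by-$r^{\delta}$ harmonic
function with $\delta<0$ attains no positive interior maximum and hence is
identically zero; alternatively one decomposes into spherical harmonics as in
Lemma \ref{inj} in the asymptotic region. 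This gives $\ker\triangle = 0$, so
$\triangle$ is injective on $C^{2,\alpha}_{\delta}$.

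For surjectivity on $(-2,0)$, the idea is to produce a solution in $\mathcal{H}$
and then bootstrap its decay. Given $f \in C^{\alpha}_{\delta-2}$ with $\delta-2
\in (-4,-2)$, one checks $f \in L^{4/3}$ (the weight $\delta-2 < -2$ guarantees
integrability at infinity, and the cone singularity is mild enough for local
integrability), so by the Sobolev inequality \ref{sob ineq} and Riesz
representation there is a weak solution $u \in \mathcal{H}$ of $\triangle u = f$.
Local elliptic regularity (Proposition \ref{schauder est}) makes $u$ locally
$C^{2,\alpha}$, and the weighted bootstrap of Lemma \ref{dense}, together with a
final application of Lemma \ref{DEC} to push the weight down through the interval
$(-2,0)$, places $u$ in $C^{2,\alpha}_{\delta}$. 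Since $\triangle$ is both
injective and surjective on this range and has closed image (Lemma \ref{fred}),
the open mapping theorem yields that it is an isomorphism.

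For the range $\delta \in (0,2)\setminus I$, only surjectivity is claimed. The
cleanest route is to leverage the isomorphism just established on a smaller
weight: given $f \in C^{\alpha}_{\delta-2}$ with $\delta-2 \in (-2,0)$, the
inclusion $C^{\alpha}_{\delta-2} \hookrightarrow C^{\alpha}_{\delta'-2}$ for some
$\delta' \in (-2,0)$ with $\delta' < \delta$ produces, by the isomorphism on
$(-2,0)$, a solution $u \in C^{2,\alpha}_{\delta'} \subset C^{2,\alpha}_{\delta}$
with $\triangle u = f$. The point is that solving with a more generous
(larger-growth) target space is easier, and membership in the smaller space is
automatic once a solution with the weaker decay exists; thus $\triangle$ is
surjective onto $C^{\alpha}_{\delta-2}$. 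Here $\triangle$ need not be injective
because indicial roots $\delta_i^{+} \in (0,2)$ (in particular $\delta_0^{+}=0$
is excluded but positive roots contribute harmonic functions growing like
$r^{\delta_i^{+}}$), which is exactly why only surjectivity is asserted.

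The main obstacle I expect is the surjectivity step on $(-2,0)$: verifying that
the Dirichlet solution $u \in \mathcal{H}$ genuinely lands in the weighted space
with the correct decay rate $r^{\delta}$ rather than merely in $L^4$. This
requires the delicate weighted $L^2$-to-$C^{2,\alpha}$ bootstrap of Lemma
\ref{dense} and the iterated maximum-principle decay of Lemma \ref{DEC} to lower
the weight all the way from the crude bound $\delta > -1$ down to $\delta > -2$,
and it is here that the exclusion of indicial roots and the precise structure of
the asymptotically conical geometry are used most essentially.
```
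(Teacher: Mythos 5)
Your injectivity argument for $\delta\in(-2,0)$ is fine and agrees with the paper (maximum principle, or integration by parts). Both of your surjectivity arguments, however, contain genuine gaps.

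On $(-2,0)$: your claim that $f\in C^{\alpha}_{\delta-2}$ lies in $L^{4/3}$ because $\delta-2<-2$ is false for $\delta\in[-1,0)$. The geometry is asymptotically conical of real dimension $4$, so the volume element on annuli is comparable to $r^{3}\,dr\,d\theta$, and for a function decaying like $r^{\delta-2}$ one has
$$\int_{\{r\geq 1\}}|f|^{4/3}\,dV \,\sim\, \int_{1}^{\infty} r^{\frac{4}{3}(\delta-2)+3}\,dr ,$$
which converges only when $\frac{4}{3}(\delta-2)<-4$, i.e.\ $\delta<-1$. So the Riesz-representation/Dirichlet-space route produces a weak solution only for $\delta\in(-2,-1)$; for $\delta\in[-1,0)$ there is no solution in $\mathcal{H}$ to bootstrap, and neither Lemma \ref{dense} (stated only for compactly supported $f$, for precisely this reason) nor Lemma \ref{DEC} (which improves decay of an already existing solution) can repair this. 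The paper's proof avoids the issue entirely: by Lemma \ref{fred} the image is closed when $\delta\notin I$, so it suffices to show the image is dense; one truncates $f_n=h_n f$ (compactly supported, hence in the image by Lemma \ref{dense}), notes that $f_n\to f$ not in $C^{\alpha}_{\delta-2}$ but in the weaker norm $C^{\alpha}_{\tilde\delta-2}$ for $\tilde\delta>\delta$ with $[\delta,\tilde\delta]\cap I=\emptyset$, solves $\triangle u=f$ with $u\in C^{2,\alpha}_{\tilde\delta}$ by closedness of the image at weight $\tilde\delta$, and then uses the uniform estimate from the proof of Lemma \ref{fred} to get $u\in C^{2,\alpha}_{\delta'}$ for all $\delta'\in(\delta,\tilde\delta]$ with a bound independent of $\delta'$, letting $\delta'\to\delta$ at the end.

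On $(0,2)\setminus I$: your reduction to the isomorphism on $(-2,0)$ rests on a backwards inclusion. For $\delta'<\delta$ the correct inclusion is $C^{\alpha}_{\delta'-2}\subset C^{\alpha}_{\delta-2}$ (faster decay sits inside slower decay); a function decaying only like $r^{\delta-2}$ with $\delta\in(0,2)$ need not lie in any $C^{\alpha}_{\delta'-2}$ with $\delta'\in(-2,0)$, so it cannot be fed into the isomorphism at the lower weight at all. Relatedly, your assertion that ``membership in the smaller space is automatic once a solution with the weaker decay exists'' is exactly what fails in general: improving the weight of a solution across an interval of weights is legitimate precisely when that interval contains no indicial roots, and this weight-improvement step is the final (and nontrivial) part of the paper's argument, not something automatic. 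The paper treats $(0,2)\setminus I$ by the same truncation/closed-image scheme described above, choosing $\tilde\delta\in(\delta,2)$ with $[\delta,\tilde\delta]\cap I=\emptyset$; nothing in that range is deduced from the isomorphism on $(-2,0)$.
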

	
	\begin{proof}
		The fact that $\triangle$ is injective when $\delta <0$ follows from the maximum principle or by integration by parts. The key is to prove that the map is onto.  By lemma \ref{fred}  it is enough to prove that the image is dense. We know from lemma \ref{dense} that the space of $C^{\alpha}$ functions with compact support is contained in the image, one detail is that this space is not dense in $C^{\alpha}_{\delta-2}$. But this can be overcome as follows:  Take $f \in C^{\alpha}_{\delta-2}$ and $\delta < \tilde{\delta} < 2$ with $[\delta, \tilde{\delta}] \cap I = \phi$ . Let $h_n$ be a sequence of smooth cut-off functions with $h_n = 1$ on $B_n$ and $h_n = 0$ on $B_{n+1}^c$. The sequence of functions $ f_n = h_n f \to f $ in $C^{\alpha}_{\tilde{\delta}-2}$ so that we can find $u \in C^{2, \alpha}_{\tilde{\delta}}$ with $\triangle u = f$. It follows from the proof of lemma \ref{fred} that we can take $u \in C^{2, \alpha}_{\delta'}$ for any $\delta' \in (\delta, \tilde{\delta}]$ and with $ \|u\|_{2, \alpha, \delta'} \leq C \|f\|_{\alpha, \delta}$ with $C$ independent of $\delta'$. By taking the limit as $\delta' \to \delta$ we get that $u \in C^{2, \alpha}_{\delta}$
		
	\end{proof}
	
	\begin{remark} \label{LRMK}
		If $\omega_u = \omega + i \partial \overline{\partial} u$ is a K\"ahler metric on $\mathbb{C}^2 \setminus C$ with $u \in C^{2, \alpha}_{\delta}$ for some $\delta < 2$, then Proposition \ref{LTHRM} holds for the Laplacian of $\omega_u$.
	\end{remark}
	
	Finally we mention some properties of these weighted spaces that will be useful to us later.
	
	\begin{itemize}
		
		\item Multiplication gives a bounded map
		$$ C^{\alpha}_{\gamma_1} \times C^{\alpha}_{\gamma_2} \to C^{\alpha}_{\gamma_1 + \gamma_2}$$
		
		\item Let $\lbrace f_j \rbrace_{j=1}^{\infty} \subset C^{\alpha}_{\gamma}$ with $\|f_j\|_{\alpha, \gamma} \leq C$ for some constant $C$. Then, after taking a subsequence,  we can assume that $f_j \to f$ uniformly in compact subsets to some function $f$. Moreover $ f \in C^{\alpha}_{\gamma}$ and $\| f \|_{\alpha, \gamma} \leq C$.
		
		\item Let $f \in C^{\tilde{\alpha}}_{\tilde{\gamma}}$ and $\alpha < \tilde{\alpha}$, $ \tilde{\gamma} < \gamma$. Then for every $\epsilon >0$ we can find $ h \in C^{\infty}_c$ such that $\| f - h \|_{\alpha, \gamma} < \epsilon$.
		
	\end{itemize}

	\subsection{Application} \label{ap}
	
	We use Proposition \ref{LTHRM} and the implicit function theorem to prove the existence of a metric $\omega_0$ with bounded Ricci curvature. In fact $\omega_0$ is Ricci-flat outside a compact set. It is not hard to see that the metrics $\omega$ and $\omega_B$ constructed in Section \ref{Rmetrics} have unbounded Ricci curvature. One can easily adapt the proof of Proposition \ref{BR} to show that in the general setting of a compact K\"ahler manifold with a smooth divisor $D \subset X$ there are metrics with cone singularities along $D$ and bounded Ricci curvature. 
	
	\begin{proposition} \label{BR}
		There exists $u_0 \in C^{2, \alpha}_{\delta}$ for some $\delta <2$ such that $\omega_0 = \omega + i \partial \overline{\partial} u_0$ is a K\"ahler form on $\mathbb{C}^2 \setminus C$ with
		$$ \omega_0^2 = e^{-f_0} |P|^{2\beta-2} \Omega \wedge \overline{\Omega}$$
		and $f_0 \in C^{\infty}_c$.
	\end{proposition}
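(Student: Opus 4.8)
The plan is to realize $\omega_0$ as a small perturbation $\omega+i\partial\overline\partial u_0$ of the reference metric of Lemma \ref{AM}, and to produce the compactly supported density $f_0$ by solving a complex Monge--Ampère equation whose right-hand side is a smooth, compactly supported approximation of the (already decaying) density error of $\omega$. Everything will be an application of Proposition \ref{LTHRM}, with no need for the global a priori estimates of the main theorem.

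First I would record from Lemma \ref{V} that $\omega^2=e^{f}|P|^{2\beta-2}\Omega\wedge\overline\Omega$ with $f$ a continuous function on $\mathbb{C}^2$, smooth outside a compact set and satisfying $|D^\alpha f(x)|\le A_{|\alpha|}|x|^{-1-|\alpha|}$. Translating this Euclidean decay into the conical weight through $b^{-1}|x|^c\le r^2\le b|x|^c$ (see \ref{r}) shows $f\in C^{\tilde\alpha}_{\tilde\gamma}$ for $\tilde\gamma=-2/c$ and any $\tilde\alpha$. Since $c=2+d\beta-d\in(0,2)$ we have $2-2/c<2$, so I can fix a weight $\delta$ with $2-2/c<\delta<2$ and $\delta\notin I$ (possible because $I$ is discrete). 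For this $\delta$ one has $f\in C^{\alpha}_{\delta-2}$, and by Proposition \ref{LTHRM} the Laplacian $\triangle$ of $\omega$ has a bounded right inverse $\mathcal{R}\colon C^{\alpha}_{\delta-2}\to C^{2,\alpha}_{\delta}$ (an isomorphism when $\delta<0$, surjective when $\delta\in(0,2)\setminus I$).

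Next I would set up the nonlinear operator. For $u$ small in $C^{2,\alpha}_{\delta}$ the form $\omega_u=\omega+i\partial\overline\partial u$ stays positive (a small perturbation in the cone Hölder norm), and
$$\Phi(u):=\log\frac{\omega_u^2}{\omega^2}=\log\Big(1+\triangle u+\frac{(i\partial\overline\partial u)^2}{\omega^2}\Big)$$
defines a map $C^{2,\alpha}_{\delta}\to C^{\alpha}_{\delta-2}$ with $\Phi(0)=0$ and $D\Phi(0)=\triangle$. That $\Phi$ lands in $C^{\alpha}_{\delta-2}$ and that its nonlinear part $N(u):=\Phi(u)-\triangle u$ is locally Lipschitz and superlinear (with $N(0)=0$, $DN(0)=0$) uses $\delta<2$ together with the multiplication property $C^{\alpha}_{\gamma_1}\times C^{\alpha}_{\gamma_2}\to C^{\alpha}_{\gamma_1+\gamma_2}$: the quadratic term lies in $C^{\alpha}_{2(\delta-2)}\subset C^{\alpha}_{\delta-2}$, and $\log(1+\cdot)$ is a convergent power series in these weighted norms. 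I would then invoke the density property ($f\in C^{\tilde\alpha}_{\tilde\gamma}$ with $\tilde\gamma<\delta-2$ and $\alpha<\tilde\alpha$) to pick $\rho\in C^\infty_c$ with $\|f-\rho\|_{\alpha,\delta-2}<\epsilon$, and solve $\Phi(u_0)=\rho-f$ by the fixed-point map $T(u)=\mathcal{R}\big((\rho-f)-N(u)\big)$. Since $\mathcal{R}$ is bounded and $N$ is a contraction with small constant on a small ball (because $DN(0)=0$), for $\epsilon$ small $T$ has a fixed point $u_0\in C^{2,\alpha}_{\delta}$ of small norm. Then
$$\log\frac{\omega_{u_0}^2}{|P|^{2\beta-2}\Omega\wedge\overline\Omega}=\Phi(u_0)+f=\rho,$$
so $f_0:=-\rho\in C^\infty_c$ gives the asserted identity, and positivity of $\omega_0=\omega_{u_0}$ follows because $i\partial\overline\partial u_0$ is a uniformly small perturbation of $\omega$ (small on compacts and $O(r^{\delta-2})$ at infinity with $\delta<2$).

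The main obstacle I expect is twofold. First, the weight bookkeeping: one must choose a single $\delta<2$ that is simultaneously above the genuine decay rate $-2/c$ of $f$ (so that $f\in C^{\alpha}_{\delta-2}$) and inside the mapping range of Proposition \ref{LTHRM}; this is exactly where $c<2$, equivalently $\beta<1$, is used. Second, one must check that $\Phi$ really is a $C^1$ map of weighted Hölder spaces with superlinear remainder, i.e.\ that the algebra and composition estimates make the contraction go through. The conceptual point I would emphasize is that, because $\rho-f$ can be made small in $C^{\alpha}_{\delta-2}$, solving the Monge--Ampère equation here is purely local near $u=0$ and requires none of the later global estimates — the compactly supported freedom in $f_0$ is precisely what lets us absorb the (non-compactly-supported) error $f$ while landing exactly on a smooth, compactly supported target.
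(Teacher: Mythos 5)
Your proposal is correct and follows essentially the same route as the paper: the paper likewise uses Lemma \ref{V} to place the volume-density error $f$ of $\omega$ in a weighted space $C^{\tilde\alpha}_{\tilde\gamma}$, approximates it by compactly supported smooth functions via the stated density property, and then solves $\mathcal{F}(u_0)=\log\bigl((\omega+i\partial\overline{\partial}u_0)^2/\omega^2\bigr)=f-h_N$ by the implicit function theorem, using $D\mathcal{F}|_0=\triangle$ together with Proposition \ref{LTHRM}. Your explicit contraction-mapping/right-inverse formulation, and the opposite sign conventions for $f$ and $f_0$, are only cosmetic differences from that argument.
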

	
	\begin{proof}
		Write 
		$$ \omega^2= e^{-f} |P|^{2\beta -2} \Omega \wedge \overline{\Omega} . $$
		We claim that there exists  $ 0 < \tilde{\alpha} < \beta^{-1} -1$ and $\tilde{\gamma} < 0$ such that $ f \in C^{\tilde{\alpha}}_{\tilde{\gamma}}$. The fact that $f \in C^{\tilde{\alpha}}$ on compacts subsets follows from the local expression \ref{DEF}. Lemma \ref{V} then proves the claim. (We can take any $\tilde{\gamma} > -1/c$ .)
		Let $ 0 < \alpha < \tilde{\alpha}$ and $ \tilde{\gamma} < \gamma <0$ such that $\delta = \gamma + 2 \notin I$.  Then there exist $\lbrace h_j \rbrace_{j=1}^{\infty} \subset C^{\infty}_{c}$ such that $\lim_{j \to \infty} \| f - h_j \|_{\alpha, \gamma}=0$.  
		
		Consider the bounded map $\mathcal{F} : U \subset C^{2, \alpha}_{\delta} \to C^{\alpha}_{\delta-2}$ defined in a neighborhood of $0$ and given by
		
		$$ \mathcal{F} (u) = \log \frac{(\omega + i \partial \overline{\partial} u)^2}{\omega^2} .$$
		So that $\mathcal{F}(0) = 0$ and $D\mathcal{F}|_0 = \triangle$. By \ref{LTHRM} and the Implicit Function Theorem,  we can solve $\mathcal{F}(u_0) = f -h_N$ for some $N>>1$.  We get that
		$$ (\omega + i \partial \overline{\partial} u_0)^2 = e^{f-h_N} \omega^2$$
		and the proposition is proved with $f_0 = h_N$.
		
		Note that the positivity of $\omega_0$ follows from the equation that its volume form satisfies together with the decay of $i\partial\overline{\partial} u_t$ and the connectedness of $\mathbb{C}^2 \setminus C$.
		
	\end{proof}
	
	In Proposition \ref{BR} the function $f_0$ is smooth with respect to the complex coordinates. One can use the same proof, with the obvious modification, to get a K\"ahler metric $\tilde{\omega}_0$ such that $ \tilde{\omega}_0^2 = e^{-\tilde{f}_0} |P|^{2\beta-2} \Omega \wedge \overline{\Omega}$, with $\tilde{f}_0$ a compactly supported function, smooth in the cone coordinates. What will be relevant for us in the next section is that $i\partial \overline{\partial} f_0$ is bounded with respect to $\omega_0$; the metric $\tilde{\omega}_0$ would do the job as well.
	
	\section{A priori estimates for the Monge-Ampere equation} \label{APRIORI}
	
	Let  $\omega_0$ be given by Proposition \ref{BR}, so that  
	$$\omega_0^2 = e^{-f_0} |P|^{2\beta -2} \Omega \wedge \overline{\Omega} $$
	with $f_0 \in C^{\infty}_c$.
	Fix $ 0 < \alpha < \beta^{-1} -1$ and $ -2 < \delta <0$. The main result of this section is the following

	\begin{proposition} \label{a priori est}
		There exists a constant $C$ independent of $ t \in [0, 1]$ such that if $ u_t \in C^{2, \alpha}_{\delta}$ solves
		$$ (\omega_0 + i \partial \overline{\partial} u_t)^2 = e^{tf_0} \omega_0^2 , $$
		then $\|u_t\|_{2, \alpha, \delta} \leq C$.
	\end{proposition}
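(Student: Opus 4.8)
The plan is to establish the five estimates announced in the introduction, in the order $C^0\to C^2\to$ interior $C^{2,\alpha}\to$ weighted $C^0\to$ weighted $C^{2,\alpha}$, taking care at every stage that the constants depend only on $\omega_0$, $f_0$, $\alpha$, $\delta$ and never on $t$. For the sup bound, write $\phi=u_t$ and subtract, so that $\omega_{\phi}^2-\omega_0^2=(e^{tf_0}-1)\omega_0^2$ with left-hand side $i\partial\overline{\partial}\phi\wedge(\omega_{\phi}+\omega_0)$. Testing against $|\phi|^{p-2}\phi$ and integrating by parts produces $\int |\nabla |\phi|^{p/2}|^2$ on the left and a term supported on $\mbox{supp}(f_0)$ on the right; since $f_0$ is compactly supported the inhomogeneity is harmless, and feeding this into the Sobolev inequality \ref{sob ineq} and iterating $p\mapsto 2p$ in the manner of Joyce \cite{Joy} yields a uniform bound $\|u_t\|_{C^0}\leq C$, the decay $u_t\in C^{2,\alpha}_{\delta}$ with $\delta<0$ being used to start from a finite base norm.

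The heart of the argument is the $C^2$ estimate. Set $S=\mbox{tr}_{\omega_t}\omega_{ref}$. Applying the Chern--Lu inequality to the identity map $(\mathbb{C}^2\setminus C,\omega_t)\to(\mathbb{C}^2\setminus C,\omega_{ref})$ gives $\triangle_{\omega_t}\log S\geq -C_1-B\,S$, where $B$ is the upper bound on $\mbox{Bisec}(\omega_{ref})$ from Lemma \ref{UBD} and $C_1$ comes from the lower Ricci bound $\mbox{Ric}(\omega_t)=(1-t)\mbox{Ric}(\omega_0)\geq -C_1\omega_{ref}$, which holds uniformly in $t$ by \ref{t=0}. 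Considering $H=\log S-\lambda u_t$ and using $\triangle_{\omega_t}u_t=2-\mbox{tr}_{\omega_t}\omega_0$ together with $\omega_0\geq c\,\omega_{ref}$, one finds $\triangle_{\omega_t}H\geq S-C_3$ as soon as $\lambda$ is chosen with $\lambda c>B$. A maximum principle then bounds $S$, and combining with the pinched volume factor $e^{tf_0}$ in the Monge--Amp\`ere equation gives the two-sided bound $C^{-1}\omega_{ref}\leq\omega_t\leq C\omega_{ref}$. I expect this step to be the main obstacle, since the maximum principle must be run on a space that is both non-compact and singular along $C$: at infinity one uses that $\omega_t,\omega_{ref}\to\omega_F$, so $S\to 2$ and $u_t\to 0$, controlling $H$ there; across the curve one invokes the barrier trick with $\delta|l|^{2\epsilon}$ described after Lemma \ref{DEC} to push any would-be maximum off $C$ onto a regular point; and one must check that $S$ stays bounded near $C$, which holds precisely because $\omega_t$ and $\omega_{ref}$ share the same cone angle $\beta$.

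With $\omega_t$ uniformly equivalent to $\omega_{ref}$, the interior estimate of Chen--Wang \cite{CW} (Theorem 1.7) supplies uniform local $C^{2,\alpha}$ control, valid across the singular set as well. It then remains to convert this into the weighted norm. First I would prove $\|u_t\|_{0,\mu}\leq C$ for some $\delta<\mu<0$ by a second, weighted Moser iteration on the dyadic annuli, following \cite{Joy} and using the scaling $D_{\lambda}$ together with the fact that $f_0\equiv 0$ outside a compact set, so that the equation is homogeneous near infinity (alternatively, Lemma \ref{DEC} gives the decay directly). Rescaling the Chen--Wang estimate on annuli upgrades this to a uniform weighted bound $\|u_t\|_{2,\alpha,\mu}\leq C$, whence $i\partial\overline{\partial}u_t=O(r^{\mu-2})$ with a constant independent of $t$.

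This uniform decay is exactly what is needed to make the averaged linearisation $L_t u_t=tf_0$, with $L_t=\int_0^1\triangle_{\omega_0+s\,i\partial\overline{\partial}u_t}\,ds$ uniformly elliptic, a perturbation of $\triangle_F$ that is uniformly small at infinity in the sense of \ref{ASYL}. Since $-2<\delta<0$ lies outside the indicial set $I$, the corollary giving \ref{ESTIMATE} then applies with a constant uniform in $t$, and because $u_t\in C^{2,\alpha}_{\delta}$ by hypothesis we obtain $\|u_t\|_{2,\alpha,\delta}\leq C\left(\|u_t\|_{C^0(K)}+\|tf_0\|_{\alpha,\delta-2}\right)$. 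The first term on the right is bounded by the $C^0$ estimate and the second because $f_0$ is compactly supported, hence lies in $C^{\alpha}_{\delta-2}$ with norm independent of $t$; this completes the proof.
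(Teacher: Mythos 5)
Your proposal is correct, and its architecture is the paper's own: Moser iteration against the Sobolev inequality \ref{sob ineq} for the $C^0$ bound, Chern--Lu plus the maximum principle with the $|P|^{2\epsilon}$ barrier for the $C^2$ bound, the estimate of \cite{CW} for local $C^{2,\alpha}$ control, then a weighted Moser iteration followed by the linear theory. Two of your choices, however, genuinely diverge from the paper's execution, and both are worth recording. First, in the $C^2$ step the paper writes $\omega_t=\omega_{ref}+i\partial\overline{\partial}v$ and maximizes $\log \mbox{tr}_{\omega_t}(\omega_{ref})-Av$, asserting that this quantity tends to $\log 2$ at infinity; but $v$ differs from $u_0+u_t$ by $-\Lambda h$, where $h$ is the potential of $\nu$ used to define $\omega_{ref}$ in \ref{ref}, and $h$ grows like a positive power of $|p|$, so the paper's test function in fact tends to $+\infty$. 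Your function $H=\log S-\lambda u_t$, run through $\triangle_{\omega_t}u_t=2-\mbox{tr}_{\omega_t}\omega_0$ and $\omega_0\geq c\,\omega_{ref}$, does tend to $\log 2$ (for each fixed $t$, which is all the maximum principle needs), so your normalization is the one that makes the argument close. Second, in the final step the paper keeps the fixed operator $\triangle_0$ of \ref{EQ1}, with the quadratic term $\psi=u_{i\overline{j}}^2$ on the right; since its interior estimate \ref{int1} is taken on balls of fixed size, it only yields $u_{i\overline{j}}=O(\rho^{\mu})$, and the deficit is recovered by the two-stage weight bootstrap $\tilde{\mu}\to 2+2\tilde{\mu}\to\delta$. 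You instead rescale \cite{CW} and the interior Schauder estimate on the dyadic annuli $A_{\lambda}$, where the equation is homogeneous, obtaining the stronger $\|u_t\|_{2,\alpha,\mu}\leq C$ (hence Hessian decay $O(r^{\mu-2})$) in one pass, and you then invoke the linear estimate \ref{ESTIMATE} a single time for the identity $L_t u_t=t f_0$, which is exact since $\int_0^1\triangle_{\omega_s}u_t\,ds=\log(\omega_t^2/\omega_0^2)$. The trade-off: the paper never needs the linear estimate for a $t$-dependent operator, while you must (and do) argue that the constant in \ref{ESTIMATE} for $L_t$ is uniform in $t$; this does follow, via \ref{ASYL}, from the uniform coefficient control and the uniform $O(r^{\mu-2})$ decay you establish first.

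Two caveats. The parenthetical claim that Lemma \ref{DEC} "gives the decay directly" is wrong: that lemma is for the flat Laplacian, and outside $\mbox{supp}(f_0)$ your equation only says $\triangle_{\omega_{u_t/2}}u_t=0$; recasting this against $\triangle_F$ leaves a right-hand side containing $\partial\overline{\partial}u_t$, which at that stage has no decay, so it cannot be placed in $C^0_{\delta-2}$. Delete the aside; the weighted Moser iteration is genuinely needed there. Also, in the $C^0$ step the integration by parts must be justified across the singular curve and at infinity (the paper excises $\lbrace |P|\leq\epsilon\rbrace$, uses that the area of $\lbrace |P|=\epsilon\rbrace\cap B_R$ tends to zero, and needs $p\delta+2<0$ to kill the boundary term at infinity); your sketch elides this, though it is routine.
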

	In the next subsections we derive a priori estimates on different norms of $u_t$ which can be stated in the same form as Proposition \ref{a priori est}. To avoid repetition we only state the estimate proved. We simplify notation and write $ f = t f_0$ and $u=u_t$. We hope that this simplified notation doesn't cause any confusion at the end. The set up for this section is then  a smooth function with compact support $ f$ and $ u \in C^{2, \alpha}_{\delta}$  a solution of
	\begin{equation} \label{CMA}
	(\omega_0 + i \partial \overline{\partial} u)^2 = e^{f} \omega_0^2 . 
	\end{equation}
	We denote by $\omega_u$ the corresponding K\"ahler form $\omega_0 + i \partial \overline{\partial} u$.
	
	\subsection{$C^0$ estimate}
	
	\begin{proposition} \label{C0EST}
		$ \| u \|_0 \leq C$ .
	\end{proposition}
	We follow \cite{Joyce} (pages 188-190).  The technique is Moser iteration.  Note that $u \in C^0_{\delta}$ implies that $u \in L^p$ for $p$ large and $\|u\|_0 = \lim_{p\to \infty} \|u \|_{L^p}$. The proof of Proposition \ref{C0EST} begins with the following
	
	\begin{lemma}
		Let $p>2$ with $p\delta +2 <0$. Write  $\phi = u |u|^{p/2 -1}$.  Then  we have
		\begin{equation}\label{int}
		\int_{\mathbb{C}^2} |\nabla \phi|^2 \omega_0^2 \leq p \int_{\mathbb{C}^2} u |u|^{p-2} (1-e^f) \omega_0^2 .
		\end{equation}
	\end{lemma}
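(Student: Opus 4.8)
The plan is to test the Monge-Amp\`ere equation against $\psi = u|u|^{p-2}$ and integrate by parts. Write $\omega = \omega_0 + i\partial\overline\partial u$, so the equation reads $\omega^2 = e^f\omega_0^2$. Since all forms here have even degree and commute, a difference-of-squares factorization gives
\[
(e^f - 1)\,\omega_0^2 = \omega^2 - \omega_0^2 = (\omega - \omega_0)\wedge(\omega + \omega_0) = i\partial\overline\partial u\wedge(\omega + \omega_0).
\]
I would multiply by $\psi = u|u|^{p-2}$ and integrate over $\mathbb{C}^2\setminus C$. A direct computation gives $\partial\psi = (p-1)|u|^{p-2}\partial u$, and expanding $d\bigl(\psi\, i\overline\partial u\wedge(\omega+\omega_0)\bigr)$ — using that $\omega,\omega_0$ are closed and that any $(0,2)$-form wedged with a $(1,1)$-form vanishes on a complex surface — yields, once the boundary terms are shown to drop,
\[
\int u|u|^{p-2}(1 - e^f)\,\omega_0^2 = (p-1)\int |u|^{p-2}\, i\partial u\wedge\overline\partial u\wedge(\omega + \omega_0).
\]

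Both terms on the right are nonnegative because $i\partial u\wedge\overline\partial u\ge 0$ and $\omega,\omega_0>0$; bounding $\omega+\omega_0\ge\omega_0$ leaves
\[
\int u|u|^{p-2}(1-e^f)\,\omega_0^2 \;\ge\; (p-1)\int |u|^{p-2}\, i\partial u\wedge\overline\partial u\wedge\omega_0.
\]
On the other hand, for $\phi = u|u|^{p/2-1}$ one computes $\partial\phi = \tfrac p2 |u|^{p/2-1}\partial u$, hence $i\partial\phi\wedge\overline\partial\phi = (p/2)^2|u|^{p-2}\,i\partial u\wedge\overline\partial u$, so that $\int|\nabla\phi|^2\,\omega_0^2$ is the fixed dimensional multiple of $\int i\partial\phi\wedge\overline\partial\phi\wedge\omega_0$ equal to $(p/2)^2$ times $\int|u|^{p-2}\,i\partial u\wedge\overline\partial u\wedge\omega_0$. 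Combining the two displays, the resulting constant is $(p/2)^2/(p-1)$ times the normalization factor; the elementary inequality $p/2\le p-1$, which holds exactly because $p\ge 2$, absorbs this into the stated constant $p$.

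The real obstacle is justifying the integration by parts, i.e. the vanishing of the boundary terms, since $\mathbb{C}^2\setminus C$ is noncompact and carries a conical singularity along $C$. I would exhaust the domain by $\{\rho_C > s\}\cap B_R$, where $\rho_C$ is the distance to $C$, and treat the two boundaries separately. Near $C$ the metric is modeled on $g_{(\beta)}$, the $\epsilon$-component of $\partial u$ vanishes on $\{z_1=0\}$ (as built into the definition of $C^{2,\alpha}$ in Subsection \ref{mcs}), and the tube $\{\rho_C = s\}$ has area $O(s)$, so that boundary integral tends to $0$ as $s\to 0$ — the same mechanism used for the maximum principle in Subsection \ref{wh}. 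At infinity I would use that $u\in C^{2,\alpha}_\delta$ forces $|u| = O(r^\delta)$ and $|\partial u|_{g_F} = O(r^{\delta-1})$, while $D_\lambda^*\tau_i = \lambda\tau_i$ and $\partial B_R = D_R(\partial B_1)$ make the integrand $\psi\, i\overline\partial u\wedge(\omega+\omega_0)$ scale with weight $\delta p + 2$; hence the boundary integral over $\partial B_R$ is $O(R^{\delta p + 2})$, which tends to $0$ precisely under the hypothesis $p\delta + 2 < 0$. This same exponent count shows that every integral above converges (the only potential divergence is at infinity, the weighted behaviour near $0$ and the compact part of $C$ being harmless), so the formal computation is legitimate.
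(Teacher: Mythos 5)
Your proposal is correct and follows essentially the same route as the paper: both apply Stokes' theorem to the form $u|u|^{p-2}\,i\overline{\partial}u\wedge(\omega_0+\omega_u)$ on a region that excises a neighborhood of $C$ and intersects with $B_R$, kill the boundary term near $C$ by the vanishing area of the tube and the one at infinity by the exponent count $p\delta+2<0$, and then discard the nonnegative $\omega_u$-term to absorb the constant $(p/2)^2/(p-1)$ into $p$. The only (harmless) cosmetic differences are your use of distance tubes instead of the level sets $\{|P|=\epsilon\}$ and your explicit difference-of-squares factorization, which the paper performs implicitly in computing $d\eta$.
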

	
	\begin{proof}
		We have to check that the relevant integration by parts arguments in \cite{Joyce} hold in our context of metrics with cone singularities.
		Define a 3-form $\eta$ by
		$$ \eta = u |u|^{p-2} i \overline{\partial} u \wedge ( \omega_0 + \omega_u) , $$
		where $\omega_u = \omega + i \partial \overline{\partial}u$.  This form is $C^1$ on $\mathbb{C}^2 \setminus C$. Fix $R,  \epsilon>0$ and consider the region $U = B_R \setminus \lbrace |P| \leq \epsilon \rbrace$. By Stokes' theorem $ \int_U d\eta = \int_{\partial U} \eta$. Use the equation \ref{CMA} to get 
		$$ d\eta = (p-1) |u|^{p-2} i \partial u \wedge \overline{\partial} u \wedge ( \omega_0 + \omega_u) + u |u|^{p-2} (e^f -1) \omega_0^2 . $$
		For $R$ fixed we let $\epsilon \to 0$. Write $ C_{\epsilon} = \lbrace |P| = \epsilon \rbrace \cap B_R$. Note that $\lim_{\epsilon \to 0} \mbox{Area}_{g_0} (C_{\epsilon}) = 0$ and that $|\eta|_{g_0} $ is bounded. We conclude that we can take $U=B_R$ and $\partial U = S_R$. Now note that $ \mbox{Vol}_{g_0} (S_R) \leq C R^3$ and $ | \eta|_{g_0} \leq C R^{(p-1) \delta + \delta -1}$ on $S_R$. The choice $ p \delta < -2$ gives $ \lim_{R \to \infty} \int_{S_R} \eta = 0$ and we get $\int_{\mathbb{C}^2} d\eta =0$.
		The lemma follows from $ i \partial u \wedge \overline{\partial} u \wedge \omega_0 = | \nabla u |^2 \omega_0^2$, $|\nabla \phi |^2= (p^2/4) |u|^{p-2} | \nabla u |^2 $ and $ i \partial u \wedge \overline{\partial} u \wedge \omega_u = F \omega_0^2$ with $F = |\nabla u |^2_{g_u} (\omega_u^2 / \omega_0^2) \geq 0$.
		
	\end{proof}
	
	Now we prove Proposition \ref{C0EST}.
	
	\begin{proof}
		The Sobolev inequality for the metric $\omega_0$ tells us 
		\begin{equation} \label{sob}
		\left( \int_{\mathbb{C}^2} |\phi|^4  \omega_0^2 \right)^{1/2} \leq C  \int_{\mathbb{C}^2} | \nabla \phi|^2 \omega_0^2 .
		\end{equation}
		Apply this to $\phi = u |u|^{p/2-1}$ and  use \ref{int} to get 
		\begin{equation} \label{induction}
		\| u \|^p_{L^{2p}} \leq C p \| u \|^{p-1}_{L^{p-1}} .
		\end{equation}
		The next step is to estimate $\| u \|_{L^{p_1}}$ for some $p_1 > 2$. In order to do this we fix some $p_0 > 2$ such that $p_0 \delta +2 < 0$ .  Use \ref{int}, \ref{sob}  to get
		$$ \left( \int |u|^{2p_0} \omega_0^2 \right)^{1/2} \leq p_0 \int |1-e^f| |u|^{p_0-1} \omega_0^2 . $$
		Let $r>1$ be given by $r(p_0 -1) = 2p_0$ and $q$ by $r^{-1} + q^{-1} =1$. Let $\rho$ be a function $\geq 1$ that agrees with $r$ outside a compact set, as in the proof of Lemma \ref{dense}. We replace $|1-e^f| \leq C \rho^{\gamma}$, with $\gamma = \delta -2$. From the choices it follows that $\| \rho^{\gamma} \|_{L^q} \leq C$. H\"older's inequality then implies that $\| u \|_{L^{p_1}} \leq C$ with $p_1 = 2p_0$.
		Using the bound on $\| u \|_{L^{p_1}}$, \ref{induction} and an induction argument we get a uniform bound (independent of $p$) on $\| u \|_{L^{p}}$. Finally  $\| u \|_{C^0} = \lim_{p \to \infty} \| u \|_{L^p} \leq C$.
	\end{proof}

	\subsection{$C^2$ estimate}
	
	\begin{proposition} \label{C2EST} $C^{-1} 
		\omega_0 \leq \omega_u \leq C \omega_0$.
	\end{proposition}
	To prove Proposition \ref{C2EST}  we use the maximum principle. Our main tool is the Chern-Lu inequality (Lemma \ref{CL} below). In Yau's proof of the Calabi conjecture, the constant $C$ in Proposition \ref{C2EST} depends on a lower bound on the bisectional curvature of a reference metric. In our case we don't know of any reference metric with bisectional curvature bounded from below and there might be obstructions to the existence of one. The use of the Chern-Lu inequality (Lemma \ref{CL}) allows us to overcome this problem. Our methods in this subsection are highly inspired by Jeffres-Mazzeo-Rubinstein \cite{JMR}, although we use the Chern-Lu inequality in a slightly different way than in \cite{JMR}.
	
	\begin{lemma} \label{CL}
		Let $g$ and $\hat{g}$ be two K\"ahler metrics on $X$ such that $\mbox{Ric}(g) \geq -Q_2 \hat{g}$ and $\mbox{Bisec}(\hat{g}) \leq Q_1$ for some $Q_1, Q_2 >0$. Set $\phi = \mbox{tr}_{g} (\hat{g})$. Then
		\begin{equation}
		\triangle_{g} \log \phi \geq - Q \phi , 
		\end{equation}
		where $Q=Q_1 + Q_2$.
	\end{lemma}
	
	For the sake of completeness we include a proof of Lemma \ref{CL}. Before going to the proof we mention two points:
	
	\begin{itemize}
		\item Lemma \ref{CL} is a particular case of Proposition 7.1 in \cite{JMR}. 
		\item There is a similar formula for $\triangle_{\hat{g}} \log \phi $ if we assume an upper bound on the Ricci curvature of $\hat{g}$ and a lower bound on the bisectional curvature of $g$. See Chapter 3 in \cite{Szekelyhidi}.
	\end{itemize}
	
	\begin{proof} 
		Let $ x \in X$ and $ (z_1, \ldots, z_n)$ be holomorphic coordinates around $x$. The metrics are then given by $n$ by $n$ Hermitian matrices $(g_{i\overline{j}})$ and $(\hat{g}_{i\overline{j}})$. At the point $x$ we require that $(g_{i\overline{j}})$ is diagonal ,   $(\hat{g}_{i\overline{j}})$ is the identity and all the first derivatives of $(\hat{g}_{i\overline{j}})$ vanish.   The function $\phi$ is then given by   
		$$\phi = \mbox{tr}_{\omega} \hat{\omega} = \sum_{j, k} g^{j\overline{k}} \hat{g}_{j\overline{k}} , $$
		where $(g^{i\overline{j}})$ is the inverse transpose of $(g_{i\overline{j}})$.
		First we compute $\triangle_g \phi $. At the point $x$ we have
		$ \triangle_g \phi = \sum_{p} g^{p \overline{p}} \partial_{\overline{p}} \partial_p \phi $, where
		$$ \partial_{\overline{p}} \partial_p \phi = \sum_{j, k} (\partial_{\overline{p}} \partial_p g^{j\overline{k}}) \hat{g}_{j\overline{k}} + (\partial_{\overline{p}} \partial_p \hat{g}_{j\overline{k}}) g^{j\overline{k}} =  \sum_{j} \partial_{\overline{p}} \partial_p g^{j\overline{j}}  + (\partial_{\overline{p}} \partial_p \hat{g}_{j\overline{j}}) g^{j\overline{j}} .  $$
		We write
		$ \triangle_{\omega} \phi = I + II$ with
		$$ I = \sum_{p, j} g^{p\overline{p}} g^{j \overline{j}} \hat{g}_{j\overline{j},  p\overline{p}} , \hspace{4mm} II =  \sum_{p, j} g^{p\overline{p}} \partial_{\overline{p}} \partial_p g^{j\overline{j}} .  $$
		Subindices  after the comma indicate differentiation.  Since the coordinates are adapted to $\hat{g}$ at $x$ and the bisectional curvature of $\hat{g}$ is bounded from above by $Q_1$; we have that for every $p$ and $j$,  $-\hat{g}_{j\overline{j},  p\overline{p}} \leq Q_1$. It follows that
		\begin{equation} \label{CL1}
		I \geq - Q_1  \sum_{p, j} g^{p\overline{p}} g^{j\overline{j}} = -Q_1 \phi^2 .
		\end{equation}
		Since $(g_{i\overline{j}})$ is diagonal at $x$, we get
		$$ II  = - \sum_{p, j} g^{p\overline{p}} (g^{j\overline{j}})^2  g_{j\overline{j}, p \overline{p}} . $$
		Denote by $R$ the Riemann curvature tensor of $g$,  given by
		$$ R_{i\overline{j}k\overline{l}} = -g_{i\overline{j}, k\overline{l}} + \sum_{r, s} g^{r\overline{s}} g_{i\overline{s}, k} g_{r\overline{j}, \overline{l}}.$$
		We conclude that
		$$ - g_{l\overline{q}, p\overline{p}} = R_{l\overline{q}p\overline{p}} - \sum_{r, s} g^{r\overline{s}} g_{l\overline{s}, p} g_{r\overline{q}, \overline{p}} . $$
		We obtain
		\begin{equation} \label{CL2}
		II = \sum_{j, p} g^{p\overline{p}} (g^{j \overline{j}})^2 R_{j \overline{j} p \overline{p}}  + P  ,
		\end{equation}
		where
		$$ P = \sum_{p, j, r} (g^{j\overline{j}})^2  g^{r\overline{r}} g^{p\overline{p}} | g_{j\overline{r}, p}|^2 . $$
		Note that $\mbox{Ric}_{j\overline{j}} = \sum_{p} g^{p\overline{p}} R_{j\overline{j}p\overline{p}}$ is the Ricci curvature of $g$. Since $\mbox{Ric}(g) \geq -Q_2 \hat{g}$ we can bound first term in \ref{CL2} by 
		\begin{equation} \label{CL3}
		\sum_{j, p} g^{p\overline{p}} (g^{j \overline{j}})^2 R_{j \overline{j} p \overline{p}} =  \sum_j  (g^{j \overline{j}})^2 \mbox{Ric}_{j\overline{j}} \geq  -Q_2 \phi^2. 
		\end{equation}

		Now we bring in the logarithm to get
		$$ \triangle_g \log \phi = \frac{\triangle_g \phi}{\phi} - \frac{|\nabla \phi|^2}{\phi^2} . $$
		It follows from \ref{CL1}, \ref{CL2} and \ref{CL3} that to prove the lemma it is enough to show that $ | \nabla \phi |^2 \leq \phi P $. At the point $x$ we have
		$$ | \nabla \phi |^2 = \sum_p g^{p \overline{p}} | \partial_{p} \phi|^2, \hspace{4mm}  \partial_p \phi = - \sum_j (g^{j\overline{j}})^2 g_{j\overline{j}, p} .  $$
		So that
		$$ | \nabla \phi |^2 =  \sum_{p, j, r} g^{p\overline{p}} (g^{j\overline{j}})^2 g_{j\overline{j}, p} (g^{r\overline{r}})^2 g_{r\overline{r},\overline{p}}  =  \sum_{j, r} (g^{j\overline{j}})^2 (g^{r\overline{r}})^2 \left( \sum_p g^{p\overline{p}}  g_{j\overline{j},p}  g_{r\overline{r},\overline{p}} \right) . $$
		For each $j$ and $r$ fixed the Cauchy-Schwarz inequality implies that
		$$ \sum_p g^{p\overline{p}}  g_{j\overline{j},p}  g_{r\overline{r},\overline{p}} \leq \left( \sum_p g^{p\overline{p}} |g_{j\overline{j}, p}|^2 \right)^{1/2}       \left( \sum_p g^{p\overline{p}} |g_{r\overline{r}, \overline{p}}|^2 \right)^{1/2} .  $$
		We use the Cauchy-Schwarz inequality once again to obtain
		$$ | \nabla \phi |^2 \leq \left( \sum_j  (g^{j\overline{j}})^2 \left( \sum_p g^{p\overline{p}} |g_{j\overline{j}, p}|^2 \right)^{1/2} 
		\right)^2  =  \left( \sum_j  (g^{j\overline{j}})^{1/2} \left( \sum_p (g^{j\overline{j}})^3 g^{p\overline{p}} |g_{j\overline{j}, p}|^2 \right)^{1/2} \right)^2        $$
		
		$$ \leq \phi  \left( \sum_{j,p} (g^{j\overline{j}})^3 g^{p\overline{p}} |g_{j\overline{j}, p}|^2  \right) \leq  \phi  \left( \sum_{j, r, p} (g^{j\overline{j}})^2 g^{r\overline{r}} g^{p\overline{p}} |g_{j\overline{r}, p}|^2  \right)   = \phi P . $$
		The proof of the lemma is now complete.
		
	\end{proof}
	
	We are now ready to prove Proposition \ref{C2EST}.
	
	\begin{proof}
		We set $g$ and $\hat{g}$ to be the K\"ahler metrics corresponding to $\omega_u$ and $\omega_B$, respectively. First we check that the hypothesis of Lemma \ref{CL} hold. The upper bound on the bisectional curvature of $\hat{g}$ is given by Lemma \ref{UBD}. Recall that $\omega_u^2 = e^{tf_0} \omega_0^2$, where $\mbox{Ric}(\omega_0) = i \partial \overline{\partial} f_0$. It follows that $\mbox{Ric}(\omega_u)=(1-t) \mbox{Ric}(\omega_0)$. Since $f_0$ is smooth we clearly have $i\partial \overline{\partial} f_0 \geq -Q_2 \omega_B$ for some $Q_2 >0$. We conclude that the bound $\mbox{Ric} (g) \geq - Q_2 \hat{g}$ holds.
		
		Write $\omega_u = \omega_B + i \partial \overline{\partial} v$.  Note that $u$ and $v$ differ by a fixed function. Take the trace w.r.t. $\omega_u$ to get $ 2 = \phi + \triangle_g v$. Consider the function $ H = \log \phi - A v$, with $A = Q+1$. We want to show  that $H$ is bounded above by a uniform constant. Since $H(y) \to \log 2$ as $y \to \infty$, we can assume that $H$ attains its global maximum at $x \in \mathbb{C}^2$. If $x \notin C$, by Lemma \ref{CL} we have
		$$ 0 \geq \triangle_{g} H (x) \geq -Q\phi - A\triangle_{g} v = \phi (x) - 2A .$$
		Proposition \ref{C0EST} gives us a uniform bound on the $C^0$ norm of $u$ and hence of $v$. We conclude that at the point $x$ the function $H$ is bounded from above by a uniform constant. Since $x$ is a maximum point of $H$ the bound holds in all of $\mathbb{C}^2$.
		
		If $x \in C$ we can assume $H(x) \geq \log 2 +3$ and take $R>0$ so that $H|_{\partial B_R} \leq \log2 +1 $. Fix some $0<\epsilon < \beta$ and consider the function $\tilde{H} = H + (1/N) |P|^{2\epsilon}$,  where $N>0$ is big enough such that $ (1/N) |P|^{2\epsilon} \leq 1$ on $\partial B_R$. By our choices $\max_{y \in \overline{B_R}} \tilde{H} =\tilde{ H} (\tilde{x})$ with $\tilde{x} \notin \partial B_R$. Since $H \in C^{\alpha}$ and $\epsilon < \beta$, we have that $\tilde{x} \notin C$, hence
		$$ 0 \geq \triangle_{\omega} \tilde{ H} (\tilde{x}) = \triangle_{\omega} H + (1/N) \triangle_{\omega} |P|^{2\epsilon} \geq \triangle_{\omega} H (\tilde{x})    \geq \phi (\tilde{x}) - 2A$$
		We used that $\triangle_{\omega} |P|^{2\epsilon} \geq 0$ since $i\partial \overline{\partial}|P|^{2\epsilon} \geq 0$.  Note that $H(x) \leq \tilde{H}(x) \leq \tilde{H}(\tilde{x})$ to get the estimate.

		We have proved that $H$ is uniformly bounded from above. We use Proposition \ref{C0EST} once again to conclude that $ \phi = \mbox{tr}_g \hat{g} \leq \tilde{C}$. Therefore $\omega_B \leq \tilde{C} \omega_u$. Since the metrics $\omega_B$ and $\omega_0$ are fixed there is a fixed constant $\Lambda$ such that $\Lambda^{-1}\omega_0 \leq \omega_B$, hence $\omega_0 \leq \Lambda \tilde{C} \omega_u$. Finally we use the equation $\omega_u^2 = e^f \omega_0^2$ to get the desired bound $C^{-1} \omega_0 \leq \omega_u \leq C \omega_0$.

	\end{proof}

	\subsection{$C^{2, \alpha}$ estimate}
	
	\begin{proposition} \label{C2AEST}
		$ \| u \|_{2, \alpha} \leq C$.
	\end{proposition}
	Proposition \ref{C2AEST} is a direct consequence of Theorem 1.7 in \cite{ChenWang} which we record into Proposition \ref{CW} below. When $\beta =1$ the content of Proposition \ref{CW} is well-known and is a consequence of the so-called Evans-Krillov theory. We mention that in Yau's work on the Calabi conjecture Proposition \ref{C2AEST} was proved by means of the maximum principle -a step known as  Calabi's third order estimate-; while Proposition \ref{CW} is a local statement. Calabi's third order estimate was carried over to the context of metrics with cone singularities, under the assumption that $\beta \leq 1/2$, in Section 6  of \cite{Brendle}.
	
	We work on the space $\mathbb{C}_{\beta} \times \mathbb{C}^{n-1}$ with complex coordinates $z_1, \ldots, z_n$. If $p \in \mathbb{C}^n$ and $r>0$, we denote by $B_r (x)$ the metric ball  with center at $x$ and radius $r$ -in the distance induced by $g_{(\beta)}$. When $x=0$ we write $B_r = B_r(0)$.
	
	\begin{proposition} \label{CW}
		Let $\alpha < \alpha'<\beta^{-1} -1$ and $ \phi \in C^{2, \alpha'} (B_1)$ be such that
		$$ K^{-1} \omega_{(\beta)} \leq i\partial \overline{\partial} \phi \leq K \omega_{(\beta)}$$
		and
		$$ \det (\partial \overline{\partial} \phi) = |z_1|^{2\beta-2} e^f .$$
		Then there exists a constant $C$, which depends only only on $K$ and the $C^{\alpha}$ norm of $f$ in $B_1$ such that
		$$ [ i\partial \overline{\partial} \phi ]_{\alpha, B_{1/4}} \leq C . $$
	\end{proposition}
	For the sake of completeness we give a sketch of the proof of Proposition \ref{CW} in \cite{ChenWang}. The technique is known as `the blow-up argument' and was popularized in Geometric Analysis by Leon Simon \cite{Simon}.
	\begin{proof}
		There are three main ingredients:
		\begin{itemize}
			\item Fact 1:  Let $\Lambda >0$ and $\Lambda^{-1} < r < \Lambda$. There exists a constant $C$ which depends only on $\Lambda$ with the following property: If $\eta$ is a real closed $C^{\alpha}$ form on $B_r$ of type $(1, 1)$, then there exists a real function $\phi \in C^{2, \alpha}(B_{r/2})$ such that $i\partial\overline{\partial}\phi = \eta$ on $B_{r/2}$ and $\| \phi \|_{2, \alpha, B_{r/4}} \leq C \| \eta \|_{\alpha, B_r}$.
			\item Fact 2: Let $\omega_{\infty}$ be a $C^{\alpha}$ K\"ahler metric on $\mathbb{C}^n$ such that $\omega_{\infty}^n = \omega_{(\beta)}^n$ and $K^{-1} \omega_{(\beta)} \leq \omega_{\infty} \leq K \omega_{(\beta)}$ for some $K>0$. Then there exists a linear transformation $L$, which preserves $\lbrace z_1 =0 \rbrace$,  such that $\omega_{\infty} = L^{*} \omega_{(\beta)}$.
			\item Fact 3: There is a $\delta>0$ such that Proposition \ref{CW} holds when $K=1+\delta$.
		\end{itemize}
		Fact 1 is proved by analyzing the standard proof of the local $\partial \overline{\partial}$-lemma, see Section 4 in \cite{ChenWang2}. Fact 2 is proved by means of the maximum principle in Section 5 of \cite{ChenWang2}; in the case that $\omega_{\infty}$ is known to be a Riemannian cone see Proposition 25 in \cite{CDSII}. Fact 3 is proved in pages 228-229 of \cite{CDSII}, it follows from the interior Schauder estimates and the fact that the Laplace operator is the linearization of the Monge-Ampere operator.
		
		Now let us proceed with the proof of Proposition \ref{CW}. We write $\omega = i \partial \overline{\partial} \phi$. For $q \in B_1$ denote by $d_q$ the distance from $q$ to the boundary of $B_1$.  Define the H\"older radius, $h_{\omega, q}$, as the supremal of the $ h \in (0, d_q)$ such that $ [ \omega]_{\alpha, B_h (q)} \leq \delta_0 h^{-\alpha}$, where $\delta_0$ is a small positive number which depends only on $K$. To prove the proposition it is enough to show that there exists a constant $c_0>0$, depending only on $K$ and $\|f\|_{\alpha, B_1}$,  such that $h_{\omega, q}/d_q \geq c_0$ for all $q \in B_1$. We argue by contradiction and assume that there are $\omega_k = i \partial \overline{\partial} \phi_k$, $q_k \in B_1$ such that:
		\begin{equation} \label{SEQ1}
		K^{-1} \omega_{(\beta)} \leq \omega_k \leq K \omega_{(\beta)}, \hspace{3mm}  \det (\omega_k) = |z_1|^{2\beta-2} e^{f_k}, \hspace{3mm} \| f_k \|_{\alpha', B_1} \leq 1
		\end{equation}
		
		\begin{equation} \label{SEQ2}
		\frac{h_{\omega_k, q_k}}{d_{q_k}} = \epsilon_k \to 0, \hspace{3mm} \frac{h_{\omega_k, q_k}}{d_{q_k}} \leq 2 \inf_{q \in B_1} \frac{h_{\omega_k, q}}{d_{q}} .
		\end{equation}
		We rescale  and define
		$$ \hat{z_1} = h_{\omega_k, q_k}^{-1/\beta} (z_1 - z_1(q_k) ) , \hspace{3mm} z_j = h_{\omega_k, q_k}^{-1} (z_j - z_j(q_k)) \hspace{3mm} \mbox{for} \hspace{1mm} j =2, \ldots, n .$$
		Write this change of coordinates as $\hat{x} = \tilde{\Gamma}_k (x)$. Let $\Gamma_k$ be the inverse of $\tilde{\Gamma}_k$ and consider $\hat{\omega}_k = h_{\omega_k, q_k}^{-2} \Gamma_k^{*} \omega_k$, $\hat{f}_k = \Gamma_k^{*} f_k$. It follows that
		
		\begin{equation} \label{SEQ3}
		\det (\hat{\omega}_k) = |\hat{z}_1|^{2\beta -2} e^{\hat{f}_k}, \hspace{3mm} h_{\hat{\omega}_k, 0} =1 .
		\end{equation}
		Note that $\tilde{\Gamma}_k  (B_{d_{q_k}}(q_k)) = B_{1/\epsilon_k}(0)$, so that the $\omega_k$ are defined on larger and larger balls.
		It is not hard to show, by means of the Arzela-Ascoli theorem and a diagonal argument, that \ref{SEQ1}, \ref{SEQ2} together with Fact 3 and Fact 1; imply that $\hat{\omega}_k$ converges in $C^{\alpha}$, up to a subsequence,  to a K\"ahler metric $\hat{\omega}_{\infty}$ defined on $\mathbb{C}^n$ as the one in Fact 2. It follows that $\hat{\omega}_{\infty}$ has constant coefficients, so that $h_{\hat{\omega}_{\infty}, 0} = \infty$. Since $\hat{\omega}_k \to \hat{\omega}_{\infty}$ in $C^{\alpha}$, for $k$ large enough we get that $ h_{\hat{\omega}_k, 0} \geq 2$ and this contradicts \ref{SEQ3}.

	\end{proof}

	\subsection{Weighted estimates} \label{WESTIMATES}
	This subsection completes the proof of Proposition \ref{a priori est}. Our main results are Proposition \ref{W1} and Proposition \ref{W2}. Our  reference is Chapter 8 in Joyce's book \cite{Joyce} again.  Proposition \ref{W1} corresponds to Theorem 8.6.6 in \cite{Joyce} and Proposition \ref{W2} to Theorem 8.6.11 in \cite{Joyce}.
	
	The first result is a weighted version of Proposition \ref{C0EST}. The proof uses Moser iteration.  We fix $\mu$ such that $ \delta < \mu < 0$.
	
	\begin{proposition} \label{W1}
		$\| u \|_{C^0_{\mu}} \leq C$.
	\end{proposition}
	Let $\psi$ be a smooth convex function of one real variable with $\psi(t) =1$ for $t \leq 1$ and $\psi(t) = t$ for $t \geq 2$. Recall that $r$ is the intrinsic distance in the flat metric to $0$ and define $\rho = \psi \circ r$.  In order to prove Proposition \ref{W1}  we introduce the norm
	$$ \| u \|_{L^p_{\mu}}^p = \int_{\mathbb{C}^2} |u|^p \rho^{-p\mu} \rho^{-4} \omega_0^2 . $$
	Because $ u \in C^0_{\delta}$ and $\delta < \mu$ we have that $u \in C^0_{\mu}$, $u \in L^p_{\mu}$ for all $p \geq 1$ and $\|u\|_{C^0_{\mu}} = \lim_{p \to \infty} \| u \|_{L^p_{\mu}}$.
	
	\begin{lemma}
		For $p \geq 2$, $p \mu \leq -2$ we have
		\begin{equation} \label{induction2}
		\| u \|^p_{L^{2p}_{\mu}} \leq Cp \left( \| u\|^{p-1}_{L^{p-1}_{\mu}} + \|u\|^p_{L^p_{\mu}} \right) .
		\end{equation}
	\end{lemma}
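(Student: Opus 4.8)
The plan is to rerun the integration-by-parts argument behind \ref{int}, but now against a \emph{weighted} test function chosen so that the Sobolev inequality \ref{sob} reproduces the weighted norm $\|u\|_{L^{2p}_\mu}$ on the left. Set $s=-p\mu/2-1$ and $w=\rho^{s}$, and take $\phi=u|u|^{p/2-1}w$. Then $|\phi|^4\omega_0^2=|u|^{2p}\rho^{-2p\mu}\rho^{-4}\omega_0^2$, so $\int|\phi|^4\omega_0^2=\|u\|^{2p}_{L^{2p}_\mu}$ and \ref{sob} gives $\|u\|^{p}_{L^{2p}_\mu}\le C\int|\nabla\phi|^2\omega_0^2$. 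It therefore suffices to bound $\int|\nabla\phi|^2\omega_0^2$ by the right-hand side of \ref{induction2}.

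First I would derive the weighted analogue of \ref{int}. Applying Stokes' theorem to $\eta=u|u|^{p-2}w^2\,i\overline{\partial}u\wedge(\omega_0+\omega_u)$ and using $i\partial\overline{\partial}u\wedge(\omega_0+\omega_u)=(e^f-1)\omega_0^2$ yields $(p-1)A+B+D=0$, where $A=\int|u|^{p-2}w^2\,i\partial u\wedge\overline{\partial}u\wedge(\omega_0+\omega_u)\ge\int|u|^{p-2}w^2|\nabla u|^2\omega_0^2$, the cross term is $B=2\int u|u|^{p-2}w\,i\partial w\wedge\overline{\partial}u\wedge(\omega_0+\omega_u)$, and $D=\int u|u|^{p-2}w^2(e^f-1)\omega_0^2$. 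The integration by parts is justified exactly as in \ref{int}: the tube $\{|P|\le\epsilon\}$ around $C$ contributes nothing as $\epsilon\to0$, and on $S_R$ the integrand is $O(R^{p(\delta-\mu)-3})$ while $\mathrm{Vol}_{g_0}(S_R)=O(R^3)$, so the term at infinity vanishes because $u\in C^{2,\alpha}_\delta$ decays strictly faster than the weight, i.e. $\delta<\mu$; this is where the hypothesis $p\mu\le-2$ is used.

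Next I would estimate the two genuinely new ingredients. Expanding $\nabla\phi=w\,\nabla(u|u|^{p/2-1})+u|u|^{p/2-1}\nabla w$ gives $\int|\nabla\phi|^2\omega_0^2\le\tfrac{p^2}{2}A+2\int|u|^p|\nabla w|^2\omega_0^2$. The cross term $B$ is absorbed by Young's inequality: bounding $|i\partial w\wedge\overline{\partial}u\wedge(\omega_0+\omega_u)|\le C|\nabla w|\,|\nabla u|\,\omega_0^2$, where the $\omega_u$-part is made comparable to the $\omega_0$-part using the already established $C^2$ estimate $C^{-1}\omega_0\le\omega_u\le C\omega_0$, one gets $|B|\le\tfrac{p-1}{2}A+\tfrac{C}{p-1}\int|u|^p|\nabla w|^2\omega_0^2$. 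Feeding this into $(p-1)A=-B-D$ controls $A$ by $\tfrac{C}{(p-1)^2}\int|u|^p|\nabla w|^2\omega_0^2+\tfrac{C}{p-1}|D|$, so that $\int|\nabla\phi|^2\omega_0^2\le C\int|u|^p|\nabla w|^2\omega_0^2+Cp\,|D|$ for $p\ge2$. Since $|\nabla\rho|_{g_0}$ is bounded and $2s-2=-p\mu-4$, the weight-gradient term satisfies $\int|u|^p|\nabla w|^2\omega_0^2\le Cs^2\int|u|^p\rho^{-p\mu}\rho^{-4}\omega_0^2\le Cp^2\|u\|^p_{L^p_\mu}$, which is (a fixed power of $p$ times) the second term of \ref{induction2}. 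Finally, because $e^f-1$ is supported in a fixed compact set on which $\rho^{2s}\rho^{(p-1)\mu+4}$ is bounded (here $\mu<0$ is used), the source term obeys $|D|\le C\|u\|^{p-1}_{L^{p-1}_\mu}$, producing the first term; combining these gives \ref{induction2}.

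I expect the main obstacle to be the careful bookkeeping of the weight through the cross term $B$: one must split $i\partial w\wedge\overline{\partial}u\wedge(\omega_0+\omega_u)$ into its $\omega_0$- and $\omega_u$-parts, invoke the $C^2$ bound to make the second comparable to the first, and choose the Young parameter proportional to $p-1$ so that the resulting gradient term is swallowed by $(p-1)A$ rather than by the good term in $\int|\nabla\phi|^2\omega_0^2$. A secondary point requiring care is the boundary analysis at infinity, where the faster decay $\delta<\mu$ of $u$ relative to the weight is exactly what forces the boundary integral to vanish; once these two points are in place, identifying the surviving quantities with $\|u\|^{p-1}_{L^{p-1}_\mu}$ and $\|u\|^{p}_{L^{p}_\mu}$ is a direct computation with the weights.
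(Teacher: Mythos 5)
The paper itself does not prove this lemma: it only remarks that the argument is similar to that of \ref{induction}, that a bound on $\|\partial\overline{\partial}u\|_{C^0}$ is needed, and defers to Joyce's book. Your proposal supplies exactly that missing argument, and its skeleton is sound: the weighted test function $\phi=u|u|^{p/2-1}\rho^{s}$ turns the Sobolev inequality \ref{sob} into the left-hand side of \ref{induction2}; the Stokes identity with the weight $w^2=\rho^{2s}$ inserted is justified as in \ref{int} (the tube around $C$ contributes nothing, and your exponent count $O(R^{p(\delta-\mu)})$ for the sphere at infinity is correct); the $C^2$ estimate $C^{-1}\omega_0\le\omega_u\le C\omega_0$ enters precisely where the paper says it must, namely to compare the $\omega_u$-part of the cross term with its $\omega_0$-part; and the compact support of $e^{f}-1$ gives the $\|u\|^{p-1}_{L^{p-1}_\mu}$ term with a $p$-independent constant.

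Two inaccuracies should be flagged. First, handling the cross term by Young's inequality, together with $\int|u|^p|\nabla w|^2\le Cs^2\|u\|^p_{L^p_\mu}$ and $s\sim p|\mu|/2$, yields
\begin{equation*}
\|u\|^p_{L^{2p}_\mu}\le C\left(p\,\|u\|^{p-1}_{L^{p-1}_\mu}+p^2\,\|u\|^p_{L^p_\mu}\right),
\end{equation*}
i.e.\ $p^2$ rather than the stated $p$ on the second term (you acknowledge this). The weaker inequality still suffices for the claim it feeds into, since $(Cp_k^2)^{1/p_k}$ remains summably close to $1$ along $p_k=2^kp_0$, so the iteration bounding $\|u\|_{C^0_\mu}$ goes through; but as a proof of the lemma as literally stated there is a discrepancy. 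To recover the clean factor $p$ one must not give away the sign of the cross terms: writing the mixed term of $|\nabla\phi|^2$ as $\tfrac12\langle\nabla|u|^p,\nabla(w^2)\rangle$ and integrating by parts produces $-\tfrac12\int|u|^p\triangle(w^2)$, and since on the cone $\triangle\rho^{2s}=2s(2s+2)\rho^{2s-2}$ (cf.\ \ref{lap}), this combines with $\int|u|^p|\nabla w|^2=s^2\int|u|^p\rho^{2s-2}$ into $-s(s+2)\int|u|^p\rho^{2s-2}\le0$ exactly when $s=-p\mu/2-1\ge0$ (up to lower-order errors coming from $\omega_0$ being only asymptotically conical). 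Second, and relatedly, you assert that the hypothesis $p\mu\le-2$ is what kills the boundary term at infinity; by your own computation that term is $O(R^{p(\delta-\mu)})$ and vanishes solely because $\delta<\mu$. In your argument the hypothesis is in fact never used; its true role is the sign condition $s\ge0$ just described, and it is automatically satisfied in the application, where the iteration starts at $p_0=-4/\mu$.
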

	This lemma corresponds to Proposition 8.6.8 in \cite{Joyce}.  \ref{induction2} is a weighted version of inequality \ref{induction}. To prove the lemma we have to use the Sobolev inequality for the metric $\omega_0$ together with an integration by parts argument. We refer to pages 190-192 in \cite{Joyce}. The only work we have to do is to check that the relevant integration by parts hold in our context of metrics with cone singularities -as we did in the proof of Proposition \ref{C0EST}-; this is straightforward and we omit the details.
	To prove Proposition \ref{W1} we note that if $p_0 = (-4/ \mu)$, then $\| u \|_{L^{p_0}_{\mu}} = \| u \|_{L^{p_0}}$ and we already have a bound on this quantity. Finally, an induction argument using \ref{induction2} gives the desired bound on $\|u\|_{C^0_{\mu}}$.
	
	We move on to  state our second result, Proposition \ref{W2}. The proof uses the linear theory we developed and follows the lines of pages 193-195 in \cite{Joyce}. We pause for a moment to touch on a technical point: There is a mistake in the definition of the weighted $C^{\alpha}$ semi-norm given by formula 8.6, page 179 of \cite{Joyce}. The problem is that the semi-norm only compares points which are at distance less than the injectivity radius. This forbids the use of the interior Schauder estimates and scaling arguments needed to establish the linear theory, see the proof of  Lemma \ref{Est 1}. The arguments in pages 193-195 of \cite{Joyce} deal with this wrong semi-norm; nevertheless it is not hard to adapt the arguments to prove what we need.
	
	\begin{proposition} \label{W2}
		$ \| u \|_{2, \alpha, \delta} \leq C$
	\end{proposition}
	
	\begin{proof}
		Write $\omega_u^2 = e^f \omega_0^2$  as $ i \partial \overline{\partial} u \wedge (2\omega_0 + i \partial \overline{\partial} u ) = (e^f -1) \omega_0^2$.  We get
		\begin{equation} \label{EQ1}
		\triangle_0 u = (e^f -1) + \psi , 
		\end{equation}
		with $\psi = u_{i\overline{j}} ^2$. We could also have written
		\begin{equation} \label{EQ2}
		\triangle u = H (e^f-1), 
		\end{equation}
		where $\triangle$ is the Laplace operator of the metric $\omega_{u/2} = \omega_0 + i \partial \overline{\partial} (u/2)$ and $H = \omega_{u/2}^2 / \omega_0^2 $.
		Since $\omega_{u/2} = (1/2) \omega_0 + (1/2) \omega_u \geq (1/2) \omega_0$ and we have a bound on the $C^{2, \alpha}$ norm of  $u$, we conclude that
		\begin{equation} \label{int1}
		\| u \|_{C^{2, \alpha} (B_1(x))} \leq C \left( \| \triangle u \|_{C^{2, \alpha} (B_2(x))} + \| u \|_{C^0(B_2(x))} \right) , 
		\end{equation}
		with a constant $C$ independent of $x$. We multiply \ref{int1} by $\rho(x)^{-\mu}$ to get
		\begin{equation} \label{bound1}
		\| u_{i\overline{j}} \|_{0, \mu} \leq C, \hspace{3mm} \rho(x)^{-\mu} \frac{|u_{i\overline{j}} (x) - u_{i\overline{j}} (y)|}{d(x, y)^{\alpha}} \leq C \hspace{2mm} \mbox{whenever} \hspace{2mm} d(x, y) < 1 .
		\end{equation}
		
		Take $\mu < \tilde{\mu} < 0$, $\tilde{\mu} = \mu + \alpha$ such that $ 2\tilde{\mu} < -2$. At this point we impose some restrictions on the choices of $\delta$, $\mu$, $\tilde{\mu}$.  We start with $-2 < \delta < -1 -\alpha$, then we take $ \delta < \mu < -1 -\alpha$ and $\tilde{\mu} < -1$. We claim that \ref{bound1} implies that $ \| u_{i\overline{j}} \|_{\alpha, \tilde{\mu}} \leq C$. In fact one only needs to consider the case of $d(x, y) \geq 1$, let's say that $\rho(x) \leq\rho(y)$ and estimate 	
		$$ \rho(x)^{-\tilde{\mu} + \alpha} \frac{|u_{i\overline{j}} (x) - u_{i\overline{j}} (y)|}{d(x, y)^{\alpha}} \leq \rho(x)^{-\mu} (|u_{i\overline{j}} (x) +| u_{i\overline{j}}(y)|) \leq 2C . $$
		We use \ref{EQ1} and Proposition \ref{LTHRM} to conclude that $\| u \|_{2, \alpha, 2 + 2\tilde{\mu}} \leq C$. Then $\|u_{i\overline{j}} \|_{\alpha, 2\tilde{\mu}} \leq C$, so that $\| \psi \|_{\alpha, 4\tilde{\mu}} \leq$. Since $4\tilde{\mu} < -4 < \delta -2$,  we can use  \ref{EQ1} and Proposition \ref{LTHRM} again to obtain $\|u\|_{2, \alpha, \delta} \leq C$.
		
	\end{proof}
	
	\subsection{Proof of Theorem \ref{thm}} \label{PFTHM}

	\begin{proof}
		Let $\omega_0$ be the metric given by Proposition \ref{BR}. Take any $0 < \alpha < \beta^{-1}-1$,  $\delta \in (-2, 0)$ and consider the set 
		\begin{equation} 
		T = \lbrace t \in [0,1] : \exists u_t \in C^{2, \alpha}_{\delta} \hspace{3mm}\mbox{solution of} \hspace{3mm} (\omega_0 + i \partial \overline{\partial} u_t)^2 = e^{tf_0} \omega_0^2 \rbrace . 
		\end{equation}
		Clearly $0 \in T$, with $u_0 =0$. Note that if $t \in T$, then $\omega_t= \omega_0 + i \partial\overline{\partial}u_t$ is a K\"ahler metric with cone angle $2\pi\beta$ along $C$. The positivity follows from the equation, the decay of $i\partial\overline{\partial} u_t$ and the connectedness of $\mathbb{C}^2 \setminus C$.  Proposition \ref{LTHRM} and remark \ref{LRMK}, together with the implicit function theorem, imply that the set $T$ is open. Proposition \ref{a priori est} gives us that the set $T$ is closed. We set $\omega_{RF} = \omega_1$. It is easy to check that $\omega_{RF}$ has the desired properties. It follows from this proof that we can improve the statement on the asymptotic behavior and we can say that, outside a compact set,  $\omega_{RF} - H^{*}\omega_F \in C^{\alpha}_{\gamma}$ for any $\gamma > \max \lbrace -1/c, -4 \rbrace$.
		
	\end{proof}

\bibliographystyle{plain}
\bibliography{References}

\begin{thebibliography}{10}

\bibitem{Anderson}
Michael~T. Anderson.
\newblock Ricci curvature bounds and {E}instein metrics on compact manifolds.
\newblock {\em J. Amer. Math. Soc.}, 2(3):455--490, 1989.

\bibitem{Bartnik}
Robert Bartnik.
\newblock The mass of an asymptotically flat manifold.
\newblock {\em Comm. Pure Appl. Math.}, 39(5):661--693, 1986.

\bibitem{Brendle}
Simon Brendle.
\newblock Ricci flat {K}\"ahler metrics with edge singularities.
\newblock {\em Int. Math. Res. Not. IMRN}, (24):5727--5766, 2013.

\bibitem{CDSII}
Xiuxiong Chen, Simon Donaldson, and Song Sun.
\newblock K\"ahler-{E}instein metrics on {F}ano manifolds. {II}: {L}imits with
  cone angle less than {$2\pi$}.
\newblock {\em J. Amer. Math. Soc.}, 28(1):199--234, 2015.

\bibitem{ChenWang2}
Xiuxiong Chen and Yuanqi Wang.
\newblock On the long time behaviour of the conical {K}{\"a}hler-{R}icci flows.
\newblock {\em Journal f{\"u}r die reine und angewandte {M}athematik (Crelles
  Journal)}, 2014.

\bibitem{ChenWang}
Xiuxiong Chen and Yuanqi Wang.
\newblock {$C^{2,\alpha}$}-estimate for {M}onge-{A}mp\`ere equations with
  {H}\"older-continuous right hand side.
\newblock {\em Ann. Global Anal. Geom.}, 49(2):195--204, 2016.

\bibitem{ConlonHein}
Ronan~J. Conlon and Hans-Joachim Hein.
\newblock Asymptotically conical {C}alabi-{Y}au manifolds, {I}.
\newblock {\em Duke Math. J.}, 162(15):2855--2902, 2013.

\bibitem{Donaldson}
S.~K. Donaldson.
\newblock K\"ahler metrics with cone singularities along a divisor.
\newblock In {\em Essays in mathematics and its applications}, pages 49--79.
  Springer, Heidelberg, 2012.

\bibitem{GilbargTrudinger}
David Gilbarg and Neil~S. Trudinger.
\newblock {\em Elliptic partial differential equations of second order}.
\newblock Classics in Mathematics. Springer-Verlag, Berlin, 2001.
\newblock Reprint of the 1998 edition.

\bibitem{GriffithsHarris}
Phillip Griffiths and Joseph Harris.
\newblock {\em Principles of algebraic geometry}.
\newblock Wiley Classics Library. John Wiley \& Sons, Inc., New York, 1994.
\newblock Reprint of the 1978 original.

\bibitem{JMR}
Thalia Jeffres, Rafe Mazzeo, and Yanir~A. Rubinstein.
\newblock K\"ahler-{E}instein metrics with edge singularities.
\newblock {\em Ann. of Math. (2)}, 183(1):95--176, 2016.

\bibitem{Jeffres}
Thalia~D. Jeffres.
\newblock Uniqueness of {K}\"ahler-{E}instein cone metrics.
\newblock {\em Publ. Mat.}, 44(2):437--448, 2000.

\bibitem{Joyce}
Dominic~D. Joyce.
\newblock {\em Compact manifolds with special holonomy}.
\newblock Oxford Mathematical Monographs. Oxford University Press, Oxford,
  2000.

\bibitem{Kronheimer}
P.~B. Kronheimer.
\newblock The construction of {ALE} spaces as hyper-{K}\"ahler quotients.
\newblock {\em J. Differential Geom.}, 29(3):665--683, 1989.

\bibitem{LuoTian}
Feng Luo and Gang Tian.
\newblock Liouville equation and spherical convex polytopes.
\newblock {\em Proc. Amer. Math. Soc.}, 116(4):1119--1129, 1992.

\bibitem{MondelloPanov}
Gabriele Mondello and Dmitri Panov.
\newblock Spherical {M}etrics with {C}onical {S}ingularities on a 2-{S}phere:
  {A}ngle {C}onstraints.
\newblock {\em Int. Math. Res. Not. IMRN}, (16):4937--4995, 2016.

\bibitem{PacardRiviere}
Frank Pacard and Tristan Rivi\`ere.
\newblock {\em Linear and nonlinear aspects of vortices}, volume~39 of {\em
  Progress in Nonlinear Differential Equations and their Applications}.
\newblock Birkh\"auser Boston, Inc., Boston, MA, 2000.
\newblock The Ginzburg-Landau model.

\bibitem{Panov}
Dmitri Panov.
\newblock Polyhedral {K}\"ahler manifolds.
\newblock {\em Geom. Topol.}, 13(4):2205--2252, 2009.

\bibitem{Rubinstein}
Yanir~A. Rubinstein.
\newblock Smooth and singular {K}\"ahler-{E}instein metrics.
\newblock In {\em Geometric and spectral analysis}, volume 630 of {\em Contemp.
  Math.}, pages 45--138. Amer. Math. Soc., Providence, RI, 2014.

\bibitem{Simon}
Leon Simon.
\newblock {\em Theorems on regularity and singularity of energy minimizing
  maps}.
\newblock Lectures in Mathematics ETH Z\"urich. Birkh\"auser Verlag, Basel,
  1996.
\newblock Based on lecture notes by Norbert Hungerb\"uhler.

\bibitem{Szekelyhidi}
G\'abor Sz\'ekelyhidi.
\newblock {\em An introduction to extremal {K}\"ahler metrics}, volume 152 of
  {\em Graduate Studies in Mathematics}.
\newblock American Mathematical Society, Providence, RI, 2014.

\bibitem{Troyanov2}
Marc Troyanov.
\newblock Metrics of constant curvature on a sphere with two conical
  singularities.
\newblock In {\em Differential geometry ({P}e\~n\'\i scola, 1988)}, volume 1410
  of {\em Lecture Notes in Math.}, pages 296--306. Springer, Berlin, 1989.

\bibitem{Troyanov1}
Marc Troyanov.
\newblock Prescribing curvature on compact surfaces with conical singularities.
\newblock {\em Trans. Amer. Math. Soc.}, 324(2):793--821, 1991.

\bibitem{Yau}
Shing~Tung Yau.
\newblock On the {R}icci curvature of a compact {K}\"ahler manifold and the
  complex {M}onge-{A}mp\`ere equation. {I}.
\newblock {\em Comm. Pure Appl. Math.}, 31(3):339--411, 1978.

\end{thebibliography}

\end{document}